\newcommand{\dbar}{\ensuremath{\overline\partial}}
\shorttitle}
\@nx\MakeUppercase{\the\toks@}}
\authors}
\newcommand{\sumprime}{\if@display\sideset{}{'}\sum%
            \else\sum'\fi}
\begin{document}

\numberwithin{equation}{section}

\newtheorem{theorem}{Theorem}[section]
\newtheorem{proposition}[theorem]{Proposition}
\newtheorem{conjecture}[theorem]{Conjecture}
\def\theconjecture{\unskip}
\newtheorem{corollary}[theorem]{Corollary}
\newtheorem{lemma}[theorem]{Lemma}
\newtheorem{observation}[theorem]{Observation}
\newtheorem{definition}{Definition}
\numberwithin{definition}{section} 
\newtheorem{remark}{Remark}
\def\theremark{\unskip}
\newtheorem{kl}{Key Lemma}
\def\thekl{\unskip}
\newtheorem{question}{Question}
\def\thequestion{\unskip}
\newtheorem{example}{Example}
\def\theexample{\unskip}
\newtheorem{problem}{Problem}

\address{Franz Luef and Xu Wang: Department of Mathematical Sciences, Norwegian University of Science and Technology, Trondheim, Norway}

\email{franz.luef@ntnu.no}
\email{xu.wang@ntnu.no}

\title[Gabor frame and Seshadri constant]{Gaussian Gabor frames, Seshadri constants and generalized Buser--Sarnak invariants}

 \author{Franz Luef and Xu Wang}
\date{\today}

\begin{abstract} We investigate the frame set of regular multivariate Gaussian Gabor frames using methods from K\"ahler geometry such as H\"ormander's $\dbar$-$L^2$ estimate with singular weight, Demailly's Calabi--Yau method for K\"ahler currents and a K\"ahler-variant generalization of the symplectic embedding theorem of McDuff--Polterovich for ellipsoids. Our approach is based on the well-known link between sets of interpolation for the Bargmann-Fock space and the frame set of multivariate Gaussian Gabor frames. We state sufficient conditions in terms of a certain extremal type Seshadri constant of the complex torus associated to a lattice to be a set of interpolation for the Bargmann-Fock space, and give also a condition in terms of the generalized Buser-Sarnak invariant of the lattice. In particular, we obtain an effective Gaussian Gabor frame criterion in terms of the covolume for almost all lattices, which is the first general covolume criterion in multivariate Gaussian Gabor frame theory. The recent Berndtsson--Lempert method and the Ohsawa--Takegoshi extension theorem also allow us to give explicit estimates for the frame bounds in terms of certain Robin constant. In the one-dimensional case we obtain a sharp estimate of the Robin constant using Faltings' theta metric formula for the Arakelov Green functions.
\bigskip

\noindent{{\sc Mathematics Subject Classification} (2020): 42C15, 32A25}

\smallskip

\noindent{{\sc Keywords}: Gabor frame, Bargmann transform, H\"ormander estimate, Calabi Yau theorem, Seshadri constant, Ohsawa Takegoshi extension, Berndtsson Lempert method}

\end{abstract}

\maketitle


\section{Introduction}

Let $\Lambda$ be a lattice in $\mathbb{R}^{2n}$ and 
\begin{equation}\label{eq:S}
\mathfrak{H}:=\{\Omega\in \mathfrak{gl}(n,\mathbb C): \Omega=\Omega^T, \  \text{\rm Im}\, 
\Omega \ \text{is positive definite} \}
\end{equation}
be the Siegel upper half-space. Fix $\Omega\in\mathfrak{H}$, a Gaussian Gabor system, denoted by $(g_\Omega,\Lambda)$, is the family of functions $\{\pi_\lambda g_\Omega\}_{\lambda\in\Lambda}$, where $\Lambda$ is a lattice in $\mathbb{R}^{2n}$ and  
$$
(\pi_\lambda g_\Omega)(t):=e^{2\pi i \xi^T t}g_\Omega(t-x), \ \ \lambda:=(\xi,x) \in \Lambda, \ \ \xi^T t:=\sum_{j=1}^n \xi_j t_j,
$$
denotes a time-frequency shift of the Gaussian $g_\Omega(t):=\overline{e^{\pi i t^T\Omega t}}$. Letting $I$ denote the identity matrix, we note that $g_{iI}(t)$ is the standard Gaussian $e^{-\pi |t|^2}$. The frame set of $g_\Omega$ is the set of all lattices $\Lambda$ in $\mathbb{R}^{2n}$ such that $\{\pi_\lambda g_\Omega\}_{\lambda\in\Lambda}$ is a frame for $L^2(\mathbb{R}^n)$. Here, by a frame, we mean there exist positive constants $A,B$ (called frame bounds) such that \[  A\|f\|^2\le\sum_{\lambda\in\Lambda} |(f, \pi_\lambda g_\Omega)|^2\le B\|f\|^2\quad\text{for}~~f\in L^2(\mathbb{R}^n),\]
where $(\cdot, \cdot)$ denotes the $L^2$-inner product. 
Using the Bargmann transform, one may reformulate the frame property for Gaussian Gabor frames in terms of sampling property of the Bargmann--Fock space in complex analysis. In the one-dimensional case there is a density criterion for the interpolation problem in the Bargmann-Fock space due to Lyubarskii and Seip-Wallst\'en, which implies a seminal result in the theory of Gaussian Gabor frames. Namely, that $\{\pi_\lambda g_{iI}\}_{\lambda\in\Lambda}$ is a Gabor frame if and only if the covolume $|\Lambda|<1$, \cite{Lyu92,Seip92,SW92}. Recent progress on the description of the frame set of a Gabor atom has been made for totally positive functions \cite{GS,GRS} and for rational functions \cite{bekuly21}. Note that all aforementioned results on frame sets for Gabor systems are for uniformly discrete point sets in the plane. The generalization to the higher-dimensional case has been one of the most intriguing problems in the study of Gaussian Gabor frames since the methods in \cite{Lyu92,Seip92,SW92} do not have natural counterparts in the theory of several complex variables. The reason being that the theory of sampling and interpolation in several complex variables \cite{MT, Lindholm, Gr11,GL,luma09} is far more intricate than in the one-dimensional case (see \cite[section 3]{PR}) and despite considerable effort not well understood. In particular, the following central problem in multivariate Gaussian Gabor frame theory is still open. 

\medskip
\noindent
\textbf{Problem A}: Is there an equivalent Gabor frame criterion for $(g_{iI}, \Lambda)$ only in terms of the covolume $|\Lambda|$ for \emph{almost all} lattices $\Lambda$ in $\mathbb{R}^{2n}$ ($n>1$)?

\medskip

We obtain the following partial result, which is a direct consequence of Proposition \ref{pr:1.1} and our H\"ormander criterion in section \ref{se:1.2}.

\begin{theorem}[First main theorem]\label{th:main-01}  Fix $\Omega \in \mathfrak{H}$, if $|\Lambda|<\frac{n!}{n^n}$ then  $(g_\Omega,\Lambda)$ is a Gabor frame for almost all $\Lambda$ in $\mathbb{R}^{2n}$. More precisely,  $(g_\Omega,\Lambda)$ is a Gabor frame if $|\Lambda|<\frac{n!}{n^n}$ and $(\Omega, \Lambda)$ is a transcendental pair (see the definition and  the remark below for explicit examples of the transcendental pairs).
\end{theorem}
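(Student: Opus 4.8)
The plan is to derive Theorem \ref{th:main-01} from two inputs that are promised in the excerpt: Proposition \ref{pr:1.1}, which (via the Bargmann transform) translates the Gabor frame property of $(g_\Omega,\Lambda)$ into a sampling/interpolation statement for the Bargmann--Fock space attached to the data $(\Omega,\Lambda)$, and the H\"ormander-type criterion of section \ref{se:1.2}, which gives a sufficient condition for the relevant interpolation property in terms of a positivity (curvature) condition on a line bundle over the complex torus $X_\Lambda = \mathbb{C}^n/\Lambda'$, where $\Lambda'$ is the lattice naturally associated to $(\Omega,\Lambda)$. So the first step is to recall precisely how a lattice $\Lambda \subset \mathbb{R}^{2n}$ of covolume $|\Lambda|$ together with $\Omega \in \mathfrak{H}$ produces such a torus and a polarizing class, and to record that the normalized volume of the polarization is controlled by $|\Lambda|$ (this is where the constant $n!/n^n$ must ultimately come from).

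Next I would invoke the H\"ormander criterion from section \ref{se:1.2}: interpolation (hence the frame property through Proposition \ref{pr:1.1}) holds provided one can solve a $\dbar$-equation with a singular weight whose curvature, after subtracting the logarithmic pole at a point, stays strictly positive. The standard way to produce such a weight is to start from a global K\"ahler current in the polarization class with an isolated logarithmic singularity of the right Lelong number at one point and bounded elsewhere; Demailly's Calabi--Yau method for K\"ahler currents (cited in the abstract) is exactly the tool that builds such a current once a numerical/volume inequality is satisfied. The numerical inequality needed is that the self-intersection of the polarization exceeds $n^n$ times the weight of the pole, i.e. that a certain Seshadri-type constant is $>1$; unwinding the normalization, this becomes precisely $|\Lambda| < n!/n^n$. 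The main obstacle, and the place where the "almost all" and the transcendence hypothesis enter, is that Demailly's construction a priori produces a current whose singularities away from the chosen point one cannot fully control: one needs the pole to be the \emph{only} bad point, equivalently one needs to avoid the analytic subvarieties (translates of abelian subvarieties, torsion points, etc.) along which extra positivity is lost. For a generic lattice the torus $X_\Lambda$ is simple and has no nontrivial abelian subvarieties, and the "transcendental pair" condition is a Diophantine genericity condition on $(\Omega,\Lambda)$ guaranteeing this; since non-simple tori and torsion-type degeneracies form a measure-zero set in the space of lattices, the conclusion holds for almost all $\Lambda$.

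The key steps in order are therefore: (1) via Proposition \ref{pr:1.1}, reduce the frame property to an interpolation statement for the Bargmann--Fock space associated with $(\Omega,\Lambda)$; (2) via the H\"ormander criterion of section \ref{se:1.2}, reduce that to the existence of a singular metric on the polarization over $X_\Lambda$ with a single logarithmic pole and strictly positive residual curvature; (3) using Demailly's Calabi--Yau method for K\"ahler currents, construct such a metric under the numerical hypothesis that the extremal Seshadri constant exceeds $1$, and check that this hypothesis is implied by $|\Lambda| < n!/n^n$ together with $X_\Lambda$ being generic (simple); (4) identify the exceptional set: show that failure of simplicity or the presence of obstructing torsion subvarieties is excluded precisely by the transcendental-pair condition, and that this condition holds off a set of measure zero in the space of lattices of the given covolume. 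The hard part is step (3)--(4): controlling the singularity locus of the K\"ahler current produced by the Calabi--Yau method so that it reduces to a single point, which is exactly why the clean covolume bound is only available for almost all $\Lambda$ rather than all of them, and why the sharp constant $n!/n^n$ (rather than the conjectural $1$) appears.
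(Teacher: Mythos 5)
Your proposal follows essentially the same route as the paper: reduce via Proposition \ref{pr:1.1} to interpolation on the adjoint lattice $\Gamma_{\Omega,\Lambda^\circ}$ (whose covolume is $1/|\Lambda|>n^n/n!$), apply the H\"ormander $\dbar$-criterion with a singular $\Gamma$-invariant weight, and obtain that weight from a K\"ahler current with an isolated logarithmic pole produced by Demailly's Calabi--Yau/mass-concentration method (in the paper, via Tosatti's formula $\epsilon_x(\omega)=\inf_V(\int_V\omega^{\dim V}/\mathrm{mult}_xV)^{1/\dim V}$, which collapses to $(n!\,|\Gamma|)^{1/n}$ exactly when the torus has no proper positive-dimensional subvarieties, i.e.\ when $(\Omega,\Lambda)$ is transcendental). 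The only caveat is that the transcendence hypothesis must exclude \emph{all} positive-dimensional analytic subvarieties, not merely abelian subvarieties, but this is precisely the paper's Definition \ref{de:tr}, so your argument is correct as outlined.
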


\begin{definition}\label{de:tr}  Let
$
\Lambda^\circ:=\{(\eta, y)\in \mathbb R^n \times  \mathbb R^n: \xi^T y -x^T\eta \in \mathbb Z, \ \forall\  (\xi,x) \in \Lambda\}
$
denote the symplectic dual of $\Lambda$ (also known as the adjoint lattice of $\Lambda$). Put
\begin{equation}\label{eq:tr}
\Gamma_{\Omega, \Lambda^\circ}:=\{({\rm Im}\, \Omega)^{-1/2} z \in\mathbb C^n: z=\eta+\Omega y, \ \ (\eta, y)\in \Lambda^\circ\},
\end{equation} 
where $({\rm Im}\, \Omega)^{-1/2}$ denotes the unique positive definite matrix whose square equals $({\rm Im}\, \Omega)^{-1}$. We call $(\Omega, \Lambda)$ a transcendental pair if the complex torus 
$
\mathbb C^n/ \Gamma_{\Omega, \Lambda^\circ}
$
has no analytic subvariety of dimension $1\leq d <n$ (see Definition \ref{de:t} for a related notion).
\end{definition}

\medskip
\noindent
\textbf{Remark.}  \emph{From the definition we know that all $(\Omega, \Lambda)$  are  transcendental  in case $n=1$. In case $n=2$, by \cite[p. 161]{Sha} we know that $(\Omega, \Lambda)$ is a transcendental pair if $\mathbb C^2/ \Gamma_{\Omega, \Lambda^\circ}$ is biholomorphic to $\mathbb C^2/\Gamma$ for some  lattice
$$
\Gamma:= \mathbb Z(1,0)+\mathbb Z(0,1) +  \mathbb Z(a,b)+\mathbb Z(c,d), \ \ a,b,c,d\in \mathbb C,
$$
with the set $\{1, a, b, c, d, ad-bc\}$ being linearly independent over $\mathbb Z$ (for example, if 
$$(a, b, c, d)=(\pi, i \pi^2, i\pi^3, \pi^4), \ (\sqrt 2, i\sqrt 3, i\sqrt 5, \sqrt 7), etc$$ then $\Gamma$ is transcendental). In fact, the argument in \cite[page 160-163]{Sha} can also be used to prove that $(\Omega, \Lambda)$ is transcendental  for almost all lattices $\Lambda$ in $\mathbb C^n$. The proof of our first main theorem also suggests the following conjecture, which would answer our Problem A (just take $\Omega=iI$)}.

\medskip
\noindent
\textbf{Conjecture A}:  Let $(\Omega, \Lambda)$ be a transcendental pair (see Definition \ref{de:tr}). Then $(g_\Omega,\Lambda)$ is a Gabor frame for $L^2(\mathbb R^n)$ if and only if $|\Lambda|<\frac{n!}{n^n}$.

\medskip
\noindent
\textbf{Remark.}  \emph{Since all $(\Omega, \Lambda)$  are  transcendental  in case $n=1$, the above conjecture is precisely the Lyubarskii-Seip-Wallst\'en theorem \cite{Lyu92,Seip92,SW92} in the one dimensional case.  In case $n>1$, the only known necessary condition is $|\Lambda|<1$, which is a consequence of the Balian-Low type theorems (see \cite{AFK, GHO} or a complex analysis proof of $|\Lambda|\leq 1$ by Lindholm \cite{Lindholm}).}

\medskip

Our second main result is a multivariate Gaussian Gabor frame  criterion for general lattices (see the remark after Theorem A in section \ref{se:1.2} for the proof and Corollary \ref{co:tA} for applications).

\begin{theorem}[Second main theorem]\label{th:main-02}  Fix $\Omega \in \mathfrak{H}$ and a lattice $\Lambda$ in $\mathbb R^{2n}$. Assume that there exist $r>1$, $\beta=(\beta_1,\cdots, \beta_n)\in \mathbb R^n$ with
$$
\beta_1+\cdots+\beta_n=1, \ \ \beta_j >0, \ 1\leq j\leq n,
$$
and a holomorphic injection $f$ from the ellipsoid $$B_r^\beta:=\left\lbrace z\in \mathbb C^n: \pi\,\sum_{j=1}^n \beta_j |z_j|^2 <r^2 \right\rbrace
$$
to the torus $X:=\mathbb C^n/ \Gamma_{\Omega, \Lambda^\circ}$ (see \eqref{eq:tr}) such that
$$
f^*(\omega+i\partial\dbar\phi)= \frac{i}2 \sum_{j=1}^n dz_j \wedge d\bar z_j \ \text{on $B_r^\beta$}
$$
for some smooth function $\phi$ on $X$, where $\omega:=\frac{i}2 \sum_{j=1}^n dw_j \wedge d\bar w_j$ is the Euclidean K\"ahler form on $X$ (note that the holomorphic cotangent bundle of $X$ is trivial with global frame $\{dw_j\}$). Then $(g_\Omega,\Lambda)$ is a Gabor frame.
\end{theorem}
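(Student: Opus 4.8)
The plan is to reduce the frame property to an interpolation statement for the Bargmann--Fock space (via the dictionary recalled in Proposition~\ref{pr:1.1}) and then to produce the interpolating solution using H\"ormander's $\dbar$-$L^2$ estimate with a singular weight, the weight being built from the embedded ellipsoid $f(B_r^\beta)$. Concretely, I would first translate the claim into showing that $\Gamma_{\Omega,\Lambda^\circ}$ (equivalently $\Lambda$, after normalization) is a \emph{set of interpolation} for the relevant Fock space on $\mathbb C^n$; by the duality between interpolation and sampling this is exactly what is needed for $(g_\Omega,\Lambda)$ to be a frame. All of the Gabor-analytic content is absorbed into that reduction, so from this point on the problem is purely one of constructing holomorphic functions on $X=\mathbb C^n/\Gamma_{\Omega,\Lambda^\circ}$ with prescribed values at a point (say the image of the origin) and controlled $L^2$ norm against $e^{-\phi}$ times the Fock weight.

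The key geometric input is the hypothesis $f^*(\omega+i\p\dbar\phi)=\frac{i}{2}\sum dz_j\wedge d\bar z_j$ on $B_r^\beta$: it says that after adjusting the flat K\"ahler form $\omega$ on $X$ by the $i\p\dbar$ of a \emph{globally defined} smooth potential $\phi$, the embedded ellipsoid carries exactly the Euclidean metric pulled back from the standard ball-type weight. This is what lets me import the sharp constant $n!/n^n$: the extremal radius for which a round/weighted ball of the right volume embeds symplectically (K\"ahler-isometrically here) is governed by the McDuff--Polterovich ellipsoid embedding, in the K\"ahler variant the authors set up. So the steps are: (i) fix the point $p=f(0)\in X$ and the local coordinate $z=f^{-1}$ near $p$; (ii) choose a cutoff $\chi$ supported in $f(B_r^\beta)$ equal to $1$ near $p$, and form the smooth (non-holomorphic) candidate $u_0=\chi\cdot(\text{target value, in the frame }dw)$; (iii) solve $\dbar u=\dbar u_0$ with the singular weight $\varphi=\phi+(n+\epsilon)\log(\pi\sum\beta_j|z_j|^2/r^2)\cdot(\text{localized})$ plus the ambient Fock weight, so that H\"ormander's estimate gives an $L^2$ solution $u$ which, because the weight is non-integrable at $p$, must vanish to the right order at $p$; (iv) set $F=u_0-u$, a holomorphic section with the prescribed jet at $p$ and the correct $L^2$ bound — this yields the interpolation inequality with an explicit constant, hence the frame bounds. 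One then checks that the curvature positivity needed to run H\"ormander, namely $\omega+i\p\dbar\phi\ge 0$ in a neighborhood and the strict positivity from the Fock weight away from it, is exactly guaranteed by the hypothesis (where the pullback is $\frac{i}{2}\sum dz_j\wedge d\bar z_j\ge0$) together with compactness of $X$ and the standard Fock curvature.

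The main obstacle I expect is item (iii): making the singular weight both \emph{plurisubharmonic on all of $X$} (so H\"ormander applies globally) and \emph{singular of the precise multiplicity $n$ at $p$} (so the solution vanishes enough to correct the jet), while keeping the total $L^2$ mass of $u_0$ finite against that weight — this is the usual tension in the Ohsawa--Takegoshi/H\"ormander interpolation scheme, and here it is resolved precisely because $\phi$ is global and smooth and because $f^*(\omega+i\p\dbar\phi)$ is \emph{flat} (hence the Lelong number computation and the integrability threshold $r^2/\pi$ versus the $\beta$-weighted volume $r^{2n}/(n!\,\beta_1\cdots\beta_n)$ come out cleanly). A secondary technical point is globalizing: $\phi$ lives on the compact torus $X$ but the Fock space lives on the universal cover $\mathbb C^n$, so I would pull everything back and check $\Gamma_{\Omega,\Lambda^\circ}$-periodicity (up to the automorphy factor of the Fock weight) is preserved — this is routine but must be spelled out to land back in $L^2(\mathbb R^n)$ via the Bargmann transform. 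Once these are in place, the frame bounds $A,B$ come out with $A$ controlled by the H\"ormander constant (ultimately by $r$ and by $\sup_X|\phi|$, i.e.\ a Robin-type constant) and $B$ from the Bessel bound for Gaussians, matching the ``explicit estimates'' promised in the abstract.
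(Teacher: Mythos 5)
Your overall architecture matches the paper's: reduce to interpolation for $\mathcal F^2$ via Proposition \ref{pr:1.1}, manufacture a $\Gamma_{\Omega,\Lambda^\circ}$-invariant singular weight out of the embedded ellipsoid, and run H\"ormander's $\dbar$-$L^2$ estimate so that non-integrability of the weight forces the correction term to vanish on the lattice. The genuine gap is in your step (iii), and it is not the technical tension you flag but the choice of model singularity. You propose the weight $(n+\epsilon)\log\bigl(\pi\sum_j\beta_j|z_j|^2/r^2\bigr)$. Since $\sum_j\beta_j|z_j|^2$ is comparable to $|z|^2$, the density $e^{-c\log(\sum_j\beta_j|z_j|^2)}$ is non-integrable at the origin only when $c\geq n$, which is why you are forced to take the coefficient $n+\epsilon$; but a globally $\omega$-psh function with an $(n+\epsilon)\log|z|^2$-type pole exists only if the \emph{classical} Seshadri constant of $(X,\omega)$ exceeds $n$. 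The hypothesis only provides an ellipsoid of capacity $r^2>1$, and the largest round ball it contains has capacity $r^2/\max_j\beta_j$, which can be arbitrarily close to $1$ when some $\beta_j$ is close to $1$. So the weight you want cannot be made plurisubharmonic on all of $X$, and H\"ormander's theorem does not apply. The point of the whole construction is the anisotropic pole $T_\beta=\log\bigl(|z_1|^{2/\beta_1}+\cdots+|z_n|^{2/\beta_n}\bigr)$ of Definition \ref{de:beta}: because $\sum_j\beta_j=1$, the density $e^{-\gamma T_\beta}$ is already non-integrable for every $\gamma\geq 1$, so a coefficient $\gamma$ only slightly larger than $1$ suffices, and that is exactly what $r^2>1$ buys.

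Second, even with the correct model, passing from the K\"ahler embedding of $B_r^\beta$ to a global $\omega$-psh function with a $\gamma T_\beta$ pole, $\gamma>1$, is not a matter of cutting off a logarithm --- a cutoff times a psh function is not psh, and the loss of positivity in the transition region scales with the coefficient. This passage is precisely the content of Theorem A (the inequality $\epsilon_x(\omega;\beta)\geq c_x(\omega;\beta)$, i.e.\ step (P1) in the proof of Theorem \ref{th:MP} adapted to $\beta$), and it requires the explicit psh envelope $\psi_r$ of Lemma \ref{le:envelope}, built by an iterated Legendre transform: $\psi_r$ equals $\pi|z|^2$ outside $B_r^\beta$, is psh on all of $\mathbb C^n$, and has exactly the $\pi r^2\,T_\beta$ singularity at the origin. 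Transplanting $\psi_r-\pi|z|^2$ by $f$ (absorbing the global potential $\phi$) and gluing with $(\pi r^2-\varepsilon)T_\beta$ via the max construction \eqref{eq:glue} produces the global weight with $\gamma>1$, whence $\iota_{\Gamma_{\Omega,\Lambda^\circ}}>1$ by \eqref{eq:hor1}, and the H\"ormander criterion (Theorem \ref{th:hor}) together with Proposition \ref{pr:1.1} finishes the argument as you outline. Your remaining remarks (periodicity of the pulled-back weight, the Bessel bound giving $B$) are fine, but the envelope construction is the missing core of the proof rather than a routine gluing.
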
 

\medskip
\noindent
\textbf{Remark.}  \emph{In case $(\Omega, \Lambda)$ is transcendental the above theorem is equivalent to our first main theorem. In order to prove this equivalence, we generalize (see Theorem A in section \ref{se:1.2}) McDuff--Polterovich's result \cite{MP} (see Theorem \ref{th:MP}) to all K\"ahler ellipsoid embeddings (see section \ref{ss:dem}).}

\medskip

In the one-dimensional case, we also obtain the following  frame bound estimates (see Theorem B in section \ref{se:1.3} for more results), which can be seen as an effective version of  \cite[Theorem 1.1]{BGL}.

\begin{theorem}[Third main theorem]\label{th:main-03}  Let $\Lambda$ be a lattice in $\mathbb R\times \mathbb R$. Suppose that the lattice
$$
\Gamma:=\{z \in\mathbb C: z=\eta+i y, \ \ (\eta, y)\in \Lambda^\circ\},
$$
in $\mathbb C$ is generated by $\{1, \tau\}$ with  ${\rm Im}\, \tau>1$.
Then for all $f\in L^2(\mathbb R)$ with $||f||=1$, we have
\begin{equation}\label{eq:frame-403}
 \frac{4\pi({\rm Im}\, \tau-1)|\eta(\tau)|^6 }{\left(\sum_{n\in\mathbb Z} e^{-\pi n^2 {\rm Im}\, \tau}\right)^2} \leq \sqrt 2\cdot \sum_{\lambda\in\Lambda} |(f, \pi_\lambda g_{iI})|^2 \leq \frac{{\rm Im}\, \tau}{1-e^{-C}}, \ \ C:=\frac{\pi}{4} \cdot \inf_{0\neq \lambda\in \Gamma} |\lambda|^2,
\end{equation}
where
$\eta(\tau):=e^{\pi i\tau/12} \,\Pi_{n=1}^\infty (1-e^{2\pi i n \tau})$ is the Dedekind eta function.
\end{theorem}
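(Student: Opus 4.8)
\noindent\emph{Proof plan.} The strategy is to pass, via the dictionary of Proposition~\ref{pr:1.1}, from the Gabor frame $(g_{iI},\Lambda)$ to the Bargmann--Fock space $\mathcal F^2(\mathbb C)$ with weight $e^{-\pi|z|^2}$, in which $\Gamma$ (of covolume $\Im\tau>1$) appears as a set of interpolation. Since $\|g_{iI}\|^2=2^{-1/2}$, the factor $\sqrt2$ in \eqref{eq:frame-403} merely renormalizes the window, so the assertion is that the frame bounds of the \emph{normalized} system $(2^{1/4}g_{iI},\Lambda)$ are bounded below by $4\pi(\Im\tau-1)|\eta(\tau)|^6\bigl(\sum_n e^{-\pi n^2\Im\tau}\bigr)^{-2}$ and above by $\Im\tau(1-e^{-C})^{-1}$. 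By Janssen duality the frame operator is $|\Lambda|^{-1}=\Im\tau$ times a positive operator assembled from the adjoint lattice $\Lambda^\circ$, which on the Fock side is carried to $\Gamma$; hence the upper frame bound is at most $\Im\tau$ times the squared norm of the restriction (sampling) map $R\colon F\mapsto(F(\gamma)e^{-\pi|\gamma|^2/2})_{\gamma}$ on $\mathcal F^2(\mathbb C)$, while the lower frame bound is at least $\Im\tau$ divided by the squared norm of any interpolation operator $T$ (a right inverse of $R$), and in particular of an optimal one.

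For the upper bound one estimates $\|R\|^2$ by a Schur test against the normalized reproducing kernels, $\|R\|^2\le\sum_{\gamma\in\Gamma}e^{-\pi|\gamma|^2/2}$, then groups the nonzero lattice points into shells by distance to the origin and bounds the number of points per shell by a two--dimensional packing estimate; this majorizes the off--diagonal part by the geometric series $\sum_{j\ge1}e^{-jC}$ with $C=\tfrac\pi4\inf_{0\ne\lambda\in\Gamma}|\lambda|^2$, giving the claimed upper bound. This half is soft.

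The lower bound is the heart of the argument and is where the sharp $L^2$--extension theory enters. Identifying $\Gamma$ with $\mathbb Z+\mathbb Z\tau$ and setting $X=\mathbb C/\Gamma$, the elliptic curve with its flat K\"ahler form $\omega$, the Bargmann--Fock interpolation problem at $\Gamma$ is recast as an extension problem from the divisor $[0]$ on $X$, to which one applies the Ohsawa--Takegoshi extension theorem with optimal constant --- equivalently the Berndtsson--Lempert method. The key point is that the Fock weight $\pi|z|^2$, once compared with a theta potential for $[0]$ that is $\Gamma$--periodic modulo $\tfrac{\pi}{\Im\tau}|z|^2$, leaves a strictly positive residual curvature of total mass $\Im\tau-1$; this ``positivity gap'' produces the factor $(\Im\tau-1)^{-1}$ in the extension constant, while the remaining dependence is through the Robin constant $\rho$ of $X$ at $[0]$, i.e.\ the constant term $\rho=\lim_{z\to0}\bigl(g_{[0]}(z)-\log|z|^2\bigr)$ of the Green function $g_{[0]}$ of $(X,\omega)$ with logarithmic pole at $[0]$ (equivalently, up to a universal additive constant, $2\log|S_\Gamma'(0)|$ for the suitably normalized theta section $S_\Gamma$ cutting out $[0]$). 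Thus $\|T\|^2$ is controlled by a universal constant times a factor depending only on $\rho$, divided by $\Im\tau-1$ and the covolume factor.

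It remains to evaluate $\rho$, and here one invokes Faltings' theta--metric formula for the Arakelov Green function of an elliptic curve. Writing $S_\Gamma$ through the Jacobi theta function $\theta_1(\cdot\,|\tau)$ --- rescaled by an exponential so that $|S_\Gamma|^2e^{-\pi|z|^2/\Im\tau}$ descends to the flat metric on $\mathcal O([0])$ and normalized in $L^2(X,\omega)$ --- Jacobi's derivative formula $\theta_1'(0\,|\tau)=2\pi\,\eta(\tau)^3$ contributes the factor $|\eta(\tau)|^6$, and the $L^2$--normalization contributes the theta constant $\sum_{n\in\mathbb Z}e^{-\pi n^2\Im\tau}$. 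Substituting back and carrying the covolume and normalization constants through yields the stated lower bound. The main obstacle is the rigorous deployment of the optimal--constant extension in this periodic, non--integral--degree setting --- arranging the set-up so that the positivity gap comes out to be exactly $\Im\tau-1$ and the constant is not lost --- together with matching all the universal constants in Faltings' formula against the normalizations of the Bargmann transform and of Janssen duality; by contrast, the upper bound and the initial reduction are routine once Proposition~\ref{pr:1.1} is in hand.
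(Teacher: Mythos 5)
Your plan for the lower (hard) half coincides with the paper's proof: reduce via Proposition~\ref{pr:1.1} and Theorem~\ref{th:dual3} to an upper bound for the minimal-norm interpolation problem in $\mathcal F^2$ at $\Gamma$, apply the sharp Ohsawa--Takegoshi/Berndtsson--Lempert extension (Theorem~\ref{th:OT-xu}) with the weight equal to the Green-function envelope $\psi_\epsilon$ of $X=\mathbb C/\Gamma$ (Theorem~\ref{th:c2}, letting $a\to\epsilon=\operatorname{Im}\tau$, which is exactly where the curvature gap $1-\epsilon^{-1}$ and hence the factor $\operatorname{Im}\tau-1$ appears), and then evaluate the Robin constant through Faltings' theta-metric formula, Jacobi's derivative formula (which is where $|\eta(\tau)|^6$ enters) and a Poisson-summation bound for $\sup\log U$ (which is where $\sum_n e^{-\pi n^2\operatorname{Im}\tau}$ enters --- in the paper this theta constant comes from the sup-normalization of the Green function, not from an $L^2$-normalization, but it is the same quantity). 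Up to bookkeeping of the constants in Theorem~\ref{th:dual3}, this is the paper's argument.

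The upper-bound half is where you diverge, and as written it has a gap. You propose to bound the norm of the restriction operator by a Schur test, $\|R\|^2\le\sum_{\gamma\in\Gamma}e^{-\pi|\gamma|^2/2}$, and then to dominate the off-diagonal sum by the geometric series $\sum_{j\ge1}e^{-jC}$ via a shell-counting/packing estimate. The required inequality $\sum_{0\ne\gamma\in\Gamma}e^{-\pi|\gamma|^2/2}\le e^{-C}/(1-e^{-C})$ does not follow from ``one point per shell'': already $\pm1\in\Gamma$ both realize the minimal length, so the first shell contains at least two points, and in general the $j$-th shell contains on the order of $\sqrt j$ points, which is not compensated term-by-term since your exponent $e^{-\pi|\gamma|^2/2}=e^{-2jC}$ only buys a factor $e^{-jC}$ of room per shell. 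So the exact constant $1/(1-e^{-C})$ is asserted, not proved. The paper obtains the upper frame bound differently and exactly: it proves the \emph{lower interpolation bound} $A=1-e^{-C}$ (Theorem~\ref{th:c1}) by restricting $\int|F|^2e^{-\pi|z|^2}$ to the disjoint balls $\{\pi|z-\gamma|^2<C\}$ and using that the Taylor expansion of $G(w)=F(w+\gamma)e^{-\pi w\bar\gamma}$ is orthogonal in $L^2$ of each ball, so that $\int_{\{\pi|w|^2<C\}}|G|^2e^{-\pi|w|^2}\ge|F(\gamma)|^2e^{-\pi|\gamma|^2}(1-e^{-C})$; Theorem~\ref{th:dual3} then converts $A$ into the upper frame bound $\operatorname{Im}\tau/(1-e^{-C})$. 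I recommend replacing your Schur/packing step by this computation (or by an honest proof of the lattice-sum inequality, noting that under the hypotheses $1\in\Gamma$ and $\operatorname{Im}\tau>1$ force $C=\pi/4$).
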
 

The above three main theorems are special cases of the H\"ormander criterion (Theorem \ref{th:hor}), Theorem A and B in section \ref{se:1.2}. The whole paper is organized as follows. 

\tableofcontents

\subsection{Background}

Our starting point is the following observation, which is an extension of a well-known duality result for $\Omega=iI$ \cite{ja82,Lyu92,Seip92}.  

\begin{proposition}\label{pr:1.1}
$(g_\Omega, \Lambda)$ is a Gabor frame for $L^2(\mathbb R^n)$ if and only if  $\Gamma_{\Omega, \Lambda^\circ}$ (see \eqref{eq:tr})
is a set of interpolation for the Bargmann--Fock space $\mathcal F^2$ (see \eqref{eq:BF-0} and Definition \ref{de:inter-intro} in section \ref{se:1.2}).
\end{proposition}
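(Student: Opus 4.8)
The plan is to realize the stated equivalence as a composition of three standard but logically distinct correspondences: a Gabor-analytic duality that trades the frame property on $\Lambda$ for a Riesz-basis property on the adjoint lattice $\Lambda^\circ$; the (generalized) Bargmann transform adapted to $\Omega$, which carries the $\Omega$-Gaussian Gabor system into a system of weighted reproducing kernels of $\mathcal F^2$; and the reproducing-kernel description of interpolation sets. Chaining them converts ``$(g_\Omega,\Lambda)$ is a frame for $L^2(\mathbb R^n)$'' into ``$\Gamma_{\Omega,\Lambda^\circ}$ is a set of interpolation for $\mathcal F^2$''.

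First I would invoke the Ron--Shen/Janssen duality principle in its higher-dimensional, general-$\Omega$ form (the extension of \cite{ja82} announced in the text): for the Gaussian window $g_\Omega$, the system $(g_\Omega,\Lambda)$ is a frame for $L^2(\mathbb R^n)$ if and only if the adjoint system $(g_\Omega,\Lambda^\circ)$ is a Riesz sequence, i.e.\ a Riesz basis for the closure of its linear span. This is precisely the step in which the relevant lattice switches from $\Lambda$ to $\Lambda^\circ$, matching the appearance of $\Lambda^\circ$ in \eqref{eq:tr}. In the classical case $\Omega=iI$, $n=1$ this is the familiar passage $|\Lambda|<1\iff|\Lambda^\circ|>1$, i.e.\ from the sampling regime for $\Lambda$ to the interpolation regime for $\Lambda^\circ$.

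Next I would pass to Fock space via the Bargmann transform $\mathcal B_\Omega$, a unitary isomorphism $L^2(\mathbb R^n)\to\mathcal F^2$ with two key properties: (a) $\mathcal B_\Omega g_\Omega$ is a nonzero constant function (the $\Omega$-Gaussian is the vacuum vector of the associated representation), and (b) $\mathcal B_\Omega$ intertwines the time-frequency shift $\pi_\lambda$ with the Fock-space shift $W_z F(w)=e^{\pi \bar z\, w-\pi|z|^2/2}F(w-z)$ at the point $z=z_\lambda$, where, as $(\eta,y)$ runs over $\Lambda^\circ$, the points $z_\lambda$ run exactly over $\Gamma_{\Omega,\Lambda^\circ}=\{({\rm Im}\,\Omega)^{-1/2}(\eta+\Omega y)\}$ of \eqref{eq:tr}. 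Applying (a)--(b), $\mathcal B_\Omega(\pi_\lambda g_\Omega)$ is a unimodular multiple of the normalized reproducing kernel $\kappa_{z_\lambda}(w)=e^{\pi \bar z_\lambda w-\pi|z_\lambda|^2/2}$, so, unitaries and unimodular scalars preserving Riesz bounds, $(g_\Omega,\Lambda^\circ)$ is a Riesz sequence if and only if $\{\kappa_z:z\in\Gamma_{\Omega,\Lambda^\circ}\}$ is a Riesz sequence in $\mathcal F^2$. I then close the loop with the standard reproducing-kernel characterization of interpolation: a discrete set $Z\subset\mathbb C^n$ is a set of interpolation for $\mathcal F^2$ precisely when $\{\kappa_z:z\in Z\}$ is a Riesz sequence---equivalently, the restriction map $F\mapsto(F(z)e^{-\pi|z|^2/2})_{z\in Z}$ is onto $\ell^2(Z)$ iff its adjoint $(a_z)\mapsto\sum_z a_z\kappa_z$ is bounded below. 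Taking $Z=\Gamma_{\Omega,\Lambda^\circ}$ and combining the three equivalences yields the Proposition.

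I expect the only genuine work to lie in the middle step, namely verifying that $\mathcal B_\Omega$ sends $g_\Omega$ to a constant and conjugates $\pi_\lambda$ into $W_{z_\lambda}$ with $z_\lambda$ given by \eqref{eq:tr}: this requires writing down the $\Omega$-dependent Bargmann kernel, evaluating the resulting Gaussian integral, and carefully tracking the affine change of variables together with the $({\rm Im}\,\Omega)^{-1/2}$ rescaling that normalizes both the window and the lattice to the standard Fock setting. A secondary point needing care is that $W_z$ is only a projective representation---its composition law carries a unimodular cocycle---but since these extra phases are unimodular they affect neither the Bessel nor the lower Riesz bounds and can be absorbed harmlessly.
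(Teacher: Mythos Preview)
Your proposal is correct and follows essentially the same route as the paper: duality (frame on $\Lambda$ $\Leftrightarrow$ Riesz sequence on $\Lambda^\circ$), then the Bargmann transform, then the standard reproducing-kernel/interpolation correspondence. The only presentational difference is that the paper factors your ``middle step'' into two pieces---first mapping $L^2(\mathbb R^n)$ to an intermediate $\Omega$-weighted Fock space $\mathcal F^2_\Omega$ via $\mathcal B_\Omega$, then applying the explicit isomorphism $F(z)\mapsto F(({\rm Im}\,\Omega)^{1/2}w)e^{\pi w^Tw/2}$ from $\mathcal F^2_\Omega$ to $\mathcal F^2$---which makes the appearance of the rescaling $({\rm Im}\,\Omega)^{-1/2}$ in \eqref{eq:tr} transparent without having to chase the intertwining relation through a single composite transform; the paper also records separately (its Lemma~\ref{le:bdd}) the Bessel bound you need to pass between ``Riesz sequence'' and ``set of interpolation'' in the sense of Definition~\ref{de:inter-intro}.
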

 
Our first main theorem is an extension of the approach by Berndtsson--Ortega Cerd\`{a} \cite{BO}   for one-dimensional Gaussian Gabor frames to the multivariate case by utilizing the theory of H\"ormander's $L^2$-estimates for $\dbar$  in the higher-dimensional case, which has been developed during the past two decades and has received quite some attention \cite{OT, Ohsawa94, Dem-sin, Bo-dbar, Bern06, Bern09, Blocki, GZ, BL}. The new idea is to apply Demailly's mass concentration technique \cite{Dem93} (see \cite{Tosatti1} for a nice survey).

A crucial notion in the proof of our second main theorem is a  \textit{generalized extremal type Seshadri constant} (see Definition \ref{de:beta} and Theorem \ref{th:extremal} for the extremal property), which replaces the covolume of a lattice in the one-dimensional case. Note also that the Seshadri constant has a quite different flavor than the (Beurling) densities used in the discussion of Gabor frames, since it actually takes into account the volumina of all subvarieties of the complex torus (see Theorem \ref{th:converse}) and not just of the whole complex torus. 

Our third main theorem is based on a recent result of Berndtsson--Lempert \cite{BL}, which also yields explicit estimates for the multivariate Gaussian Gabor frame bounds (see Theorem \ref{th:c1} and Theorem \ref{th:c2}).

A short account of our other related results is given in section \ref{se:1.2} and section \ref{se:1.3} below (the H\"ormander criterion, Theorem A, B are among the most crucial ones).

\subsection{Part I: Interpolation in Bargmann--Fock space}\label{se:1.2} Consider the Bargmann--Fock space
\begin{equation}\label{eq:BF-0}
\mathcal F^2:=\{F\in \mathcal O(\mathbb C^n): ||F||^2:=\int_{\mathbb C^n} |F(z)|^2 e^{-\pi|z|^2} <\infty\},
\end{equation}
where $\mathcal O(\mathbb C^n)$ denotes the space of holomorphic functions on $\mathbb C^n$ and we omit the Lebesgue measure in the integral. Let $\Gamma$ be a lattice in $\mathbb C^n$.

\begin{definition}\label{de:inter-intro} We call $\Gamma$ a set of interpolation for $\mathcal F^2$ if there exists a constant $C>0$ such that for every sequence of complex numbers $a=\{a_\lambda\}_{\lambda\in\Gamma}$ with 
$\sum_{\lambda\in\Gamma} |a_\lambda|^2 e^{-\pi|\lambda|^2}=1,
$
there exists $F\in \mathcal F^2$ such that $F(\lambda)=a_\lambda$ for all $\lambda\in \Gamma$ and $||F||^2 \leq C$.
\end{definition}

Denote by $|\Gamma|$ (the covolume of $\Gamma$) the volume of the torus $
X:=\mathbb C^n/\Gamma$
with respect to the Lebesgue measure. In the one-dimensional case, Lyubarskii, Seip and Wallst\'en \cite{Lyu92, Seip92, SW92} (see \cite{OS} for the most general one-dimensional generalization) proved that

\begin{theorem}\label{th:LSW} A lattice $\Gamma$ in $\mathbb C$ is a set of interpolation for $\mathcal F^2$ if and only if $|\Gamma|>1$. 
\end{theorem}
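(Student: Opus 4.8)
The plan is to transplant to this one-variable setting the classical Berndtsson--Ortega-Cerd\`a argument, namely H\"ormander's $L^2$-estimate for $\dbar$ with a singular weight; the object feeding both implications is a renormalized Weierstrass $\sigma$-function of $\Gamma$. First I would record that there is an entire function $g_\Gamma$ with zero divisor exactly $\Gamma$ (all zeros simple) which, after multiplying $\sigma_\Gamma$ by a Gaussian factor $e^{az^2}$ with $a$ fixed through the Legendre period relation so as to kill the anisotropic part of the growth, satisfies the sharp two-sided estimate $|g_\Gamma(z)| \asymp \min(\mathrm{dist}(z,\Gamma),1)\,e^{\frac{\pi}{2|\Gamma|}|z|^2}$; equivalently $\log|g_\Gamma(z)| = \frac{\pi}{2|\Gamma|}|z|^2 + p(z)$ where $p$ is $\Gamma$-periodic, bounded above, and has $\log|z-\lambda|$-type singularities on $\Gamma$. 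By Poincar\'e--Lelong, $i\p\dbar\log|g_\Gamma|^2 = 2\pi\sum_{\lambda\in\Gamma}[\lambda]$ as a current.

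For the sufficiency, assume $d:=|\Gamma|>1$ and let $\{a_\lambda\}$ satisfy $\sum_\lambda |a_\lambda|^2 e^{-\pi|\lambda|^2}=1$. I would form the smooth quasi-interpolant $\tilde F(z):=\sum_{\lambda\in\Gamma} a_\lambda\,\chi(z-\lambda)\,e^{\pi\bar\lambda z-\pi|\lambda|^2}$, with $\chi$ a fixed cutoff that is $1$ near $0$ and supported in a small ball, so that $\tilde F(\lambda)=a_\lambda$, $\|\tilde F\|_{\mathcal F^2}^2\lesssim 1$, while $\dbar\tilde F$ is supported away from $\Gamma$ with $\int|\dbar\tilde F|^2 e^{-\pi|z|^2}\lesssim 1$. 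The decisive choice is the singular weight $\Phi(z):=\pi|z|^2+2\log|g_\Gamma(z)|-\frac{\pi}{d}|z|^2$: it has $2\log|z-\lambda|$-type singularities on $\Gamma$, it obeys $\Phi\le\pi|z|^2+O(1)$ globally (indeed $\Phi=\pi|z|^2+2p(z)+O(1)$), and its curvature current is $i\p\dbar\Phi = 2\pi(1-\tfrac1d)\,\omega_0 + 2\pi\sum_{\lambda\in\Gamma}[\lambda]\ge 2\pi(1-\tfrac1d)\,\omega_0 > 0$, where $\omega_0$ is the Euclidean K\"ahler form; the strict positivity of the smooth part is exactly the place where $d>1$ enters. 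H\"ormander's estimate on $\mathbb C$ then gives $u$ with $\dbar u=\dbar\tilde F$ and $\int|u|^2 e^{-\Phi}\lesssim\int|\dbar\tilde F|^2 e^{-\Phi}<\infty$ (finite because $\dbar\tilde F$ vanishes near $\Gamma$). Now $F:=\tilde F-u$ is holomorphic; the non-integrability of $e^{-\Phi}$ at each $\lambda$ forces the smooth function $u$ to vanish on $\Gamma$, so $F(\lambda)=a_\lambda$; and $\Phi\le\pi|z|^2+O(1)$ yields $\|F\|_{\mathcal F^2}^2\lesssim\|\tilde F\|_{\mathcal F^2}^2+\int|u|^2 e^{-\Phi}\lesssim 1$. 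Hence $\Gamma$ is a set of interpolation.

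For the necessity I would prove, contrapositively, that a set of interpolation has $|\Gamma|>1$, using the same $g_\Gamma$. Feeding the data $a_0=1$, $a_\lambda=0$ $(\lambda\ne0)$ into the interpolation property produces $F_0\in\mathcal F^2$ with $F_0(0)=1$ and $F_0|_{\Gamma\setminus\{0\}}=0$. Suppose $|\Gamma|\le1$. The reproducing-kernel bound $|F_0(z)|\le\|F_0\|\,e^{\frac{\pi}{2}|z|^2}$, together with $|g_\Gamma(z)|\gtrsim\mathrm{dist}(z,\Gamma)\,e^{\frac{\pi}{2|\Gamma|}|z|^2}$ and $1/|\Gamma|\ge1$, shows that $h:=F_0/g_\Gamma$ --- a meromorphic function on $\mathbb C$ whose only singularity is a simple pole at $0$, with nonzero residue $c$ --- is bounded on each region $\{z:\mathrm{dist}(z,\Gamma)\ge\varepsilon\}$ (and even tends to $0$ there if $|\Gamma|<1$). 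Writing $h=c/z+G_0$ with $G_0$ entire, the maximum principle on the disks $B(\lambda,\varepsilon)$ propagates this bound to all of $\mathbb C$, so $G_0$ is constant (and $0$ if $|\Gamma|<1$). But then $|F_0(z)|^2 e^{-\pi|z|^2}=|g_\Gamma(z)|^2|c/z+G_0|^2 e^{-\pi|z|^2}\asymp\mathrm{dist}(z,\Gamma)^2\,|c/z+G_0|^2\,e^{-\pi(1-1/|\Gamma|)|z|^2}$, and since the Gaussian exponent is $\ge0$ and $c\ne0$, this product fails to be integrable --- contradicting $F_0\in\mathcal F^2$. Hence $|\Gamma|>1$.

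I expect the decisive work to be the construction and sharp two-sided estimate for $g_\Gamma$: isolating the isotropic rate $\frac{\pi}{2|\Gamma|}|z|^2$ via the Legendre relation and controlling $p$ near $\Gamma$ is what pins the threshold at $1$, since both the curvature gain $2\pi(1-1/|\Gamma|)$ in the sufficiency and the failure of square-integrability in the necessity turn on the sign of $1-1/|\Gamma|$. The remaining ingredients --- the weighted $\dbar$-solution, the value-forcing at $\Gamma$, and the norm bookkeeping --- are then routine one-variable $L^2$ arguments.
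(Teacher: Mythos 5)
Your argument is correct in outline, but note first that the paper does not prove Theorem \ref{th:LSW} at all: it is quoted as the classical Lyubarskii--Seip--Wallst\'en theorem with \cite{Lyu92,Seip92,SW92} (and \cite{OS}) cited for the proof, and \cite{BO} mentioned for an alternative proof of the sufficiency. So the comparison is with the machinery the paper builds around this statement rather than with an in-paper proof. Your sufficiency argument is exactly the Berndtsson--Ortega Cerd\`a scheme that the paper abstracts into its H\"ormander criterion (Theorem \ref{th:hor}): there the singular weight with a non-integrable logarithmic pole on $\Gamma$ is produced from an $\omega_{\rm euc}$-psh function with an isolated pole of order $>1$, whose existence for $|\Gamma|>1$ comes from Proposition \ref{pr:hor}; you instead build the weight concretely as $2\log|g_\Gamma(z)|-\frac{\pi}{d}|z|^2$ from the renormalized Weierstrass $\sigma$-function, which is the same object up to bounded terms. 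What your version buys is explicitness; what it costs is the two-sided estimate $|g_\Gamma(z)|\asymp \mathrm{dist}(z,\Gamma)\,e^{\frac{\pi}{2|\Gamma|}|z|^2}$, which you rightly flag as the decisive step and which must actually be supplied (it follows from the quasi-periodicity of $\sigma_\Gamma$ and the Legendre relation, which make $|g_\Gamma(z)|e^{-\frac{\pi}{2|\Gamma|}|z|^2}$ a continuous $\Gamma$-periodic function vanishing to first order exactly on $\Gamma$); this has no analogue in several variables, which is precisely why the paper replaces it by Seshadri-constant technology. For the necessity your route is genuinely different from the paper's: the paper would obtain $|\Gamma|>1$ from the cited Balian--Low type theorem (Theorem \ref{th:sam1}) via the duality of Corollary \ref{co:dual}, whereas you argue directly and self-containedly that an interpolant $F_0$ of the data $a_0=1$, $a_\lambda=0$ for $\lambda\neq 0$ would, for $|\Gamma|\le 1$, force $F_0=(c/z+b)\,g_\Gamma$ with $c\neq 0$ (Liouville applied to $F_0/g_\Gamma-c/z$ after the maximum-principle propagation of the bound from $\{\mathrm{dist}(z,\Gamma)\ge\varepsilon\}$), and then verify that no such function is square-integrable against $e^{-\pi|z|^2}$ --- correctly covering both $|\Gamma|<1$ and the critical case $|\Gamma|=1$, where the divergence of $\int \mathrm{dist}(z,\Gamma)^2\,|c/z+b|^2$ needs the separate checks $b\neq 0$ and $b=0$. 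I see no gap beyond the unproved (but classical and correctly stated) $\sigma$-function estimate.
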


Another proof of the "sufficient" part of the above theorem was given by  Berndtsson and Ortega Cerd\`{a} in \cite{BO} using the H\"ormander $\dbar$ theory. In order to make best use of the H\"ormander theory, we shall introduce the following notion of the \emph{H\"ormander constant}, which is an analogue of the Seshadri constant introduced by Demailly in \cite{Dem-sin}.

\begin{definition}\label{de:hor}  Let $\Gamma$ be a lattice in $\mathbb C^n$. The H\"ormander constant of $\Gamma$ is defined by
\begin{align*}
\iota_\Gamma:=  & \sup\{\gamma\geq 0:  \text{there exists a  $\Gamma$-invariant function $\psi$ on $\mathbb C^n$ such that $\psi+\pi|z|^2$ is psh and}  \\
& \text{$\psi=\gamma \log(|z_1|^{2/\beta_1}+\cdots+ |z_n|^{2/\beta_n})$ near $z=0$ for some $\beta_j>0$, $\beta_1+\cdots+\beta_n=1$}\},
\end{align*}
where "$\Gamma$-invariant" means that $\psi(z+\lambda) =\psi(z)$ for all $z\in \mathbb C^n$ and $\lambda\in \Gamma$; "psh" means subharmonic on each embedded disc. 
\end{definition}

By using of a result of Tosatti (see \cite[Theorem 4.6]{Tosatti}), we can prove the following:

\begin{proposition}\label{pr:hor} Assume that the only positive dimensional analytic subvariety of  $X:=\mathbb C^n/\Gamma$ is $X$ itself, then
\begin{equation}\label{eq:hor}
\iota_\Gamma= \frac{(n!)^{1/n}}{n}  |\Gamma|^{1/n}.
\end{equation}
\end{proposition}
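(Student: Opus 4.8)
The plan is to prove the identity \eqref{eq:hor} by establishing the two inequalities $\iota_\Gamma \geq \frac{(n!)^{1/n}}{n}|\Gamma|^{1/n}$ and $\iota_\Gamma \leq \frac{(n!)^{1/n}}{n}|\Gamma|^{1/n}$ separately, using the volume constraint coming from the fact that $X$ has no proper positive-dimensional subvariety. The key translation is this: a $\Gamma$-invariant function $\psi$ on $\mathbb{C}^n$ with $\psi + \pi|z|^2$ psh descends to a current $T := \omega + \frac{i}{2}\partial\dbar \psi \geq 0$ on $X$ in the cohomology class $[\omega]$, where $\omega$ is the flat Kähler form with $\int_X \omega^n = n!\,|\Gamma|$ (after fixing normalizations so that $\pi|z|^2$ corresponds to $\omega$). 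If moreover $\psi = \gamma\log(|z_1|^{2/\beta_1} + \cdots + |z_n|^{2/\beta_n})$ near $0$, then $T$ has a weighted Lelong-type singularity at the image of $0$ whose self-intersection (generalized Lelong number) forces, by the Demailly–type comparison of masses and the hypothesis that no curve or intermediate subvariety can absorb the mass, the bound $\gamma^n \leq \frac{1}{n^n}\int_X \omega^n = n!\,|\Gamma|/n^n$; this is precisely where Tosatti's result \cite[Theorem 4.6]{Tosatti} enters, guaranteeing that the only obstruction to the existence of such a current with prescribed singularity is the global volume and not the volume of a lower-dimensional subvariety (since there are none besides $X$). Taking $n$-th roots gives the upper bound on $\iota_\Gamma$.

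For the lower bound I would construct, for any $\gamma < \frac{(n!)^{1/n}}{n}|\Gamma|^{1/n}$, an explicit $\Gamma$-invariant $\psi$ of the required form. The natural candidate is to use the Green-type function / regularized maximum: start from the model singular weight $\gamma\log(|z_1|^{2/\beta_1} + \cdots + |z_n|^{2/\beta_n})$ near $0$, which is psh with $\frac{i}{2}\partial\dbar$ of it a positive current of small enough mass (the mass being controlled by $\gamma^n$ against the total volume), and then invoke Tosatti's theorem in the other direction: because $X$ carries no proper positive-dimensional subvariety, the Demailly–Calabi–Yau method for Kähler currents produces a global closed positive $(1,1)$-current in $[\omega]$ with exactly that singularity at one point, provided $\gamma^n < n!\,|\Gamma|/n^n$. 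Lifting this current to $\mathbb{C}^n$ and writing it as $\omega + \frac{i}{2}\partial\dbar\psi$ with $\psi$ $\Gamma$-invariant, and then correcting $\psi$ on a neighborhood of $0$ by a smooth function (using a regularized maximum with the model weight) so that it agrees exactly with the prescribed logarithmic form near $z=0$ while staying psh after adding $\pi|z|^2$, exhibits an admissible competitor. This shows $\iota_\Gamma \geq \gamma$ for all such $\gamma$, hence $\iota_\Gamma \geq \frac{(n!)^{1/n}}{n}|\Gamma|^{1/n}$.

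The main obstacle I anticipate is the lower bound, specifically the passage from the \emph{existence} of a Kähler current with the right generalized Lelong number to a current with \emph{exactly} the prescribed local shape $\gamma\log(|z_1|^{2/\beta_1}+\cdots)$ near $0$: Tosatti's and Demailly's results typically yield control of the Lelong number (or an asymptotic of the potential) rather than an on-the-nose formula, so one must either run Demailly's mass-concentration / equisingular approximation argument with the ellipsoidal model weight built in from the start, or perform a gluing that preserves positivity — and verifying that the gluing does not destroy the psh property of $\psi + \pi|z|^2$ in the transition annulus is the delicate point. A secondary subtlety is bookkeeping the normalization constants: one must check that the flat Kähler form associated with the weight $\pi|z|^2$ has total volume exactly $n!\,|\Gamma|$ over $X$ (the factorial coming from $\omega^n/n!$ being the Euclidean volume form), so that the extremal value $\gamma = \frac{(n!)^{1/n}}{n}|\Gamma|^{1/n}$ falls out with the stated constants and matches the $\frac{n!}{n^n}$ appearing in the first main theorem.
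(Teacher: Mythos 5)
Your proposal is correct and follows essentially the same route as the paper: the upper bound comes from comparing the Monge--Amp\`ere mass of the model weight $\gamma T_\beta$ at the origin (which equals $\gamma^n(\beta_1\cdots\beta_n)^{-1}\geq \gamma^n n^n$ by AM--GM) with the total volume $\int_X\omega^n=n!\,|\Gamma|$, and the lower bound from Tosatti's identity $\epsilon_x(\omega)=\inf_{V\ni x}\bigl(\int_V\omega^{\dim V}/\mathrm{mult}_xV\bigr)^{1/\dim V}$ together with the absence of proper subvarieties, exactly as in the paper's Proposition \ref{pr:hor2}. One small correction: the no-subvariety hypothesis and Tosatti's theorem are needed only for the lower bound, not for the mass-comparison upper bound, which holds on any compact K\"ahler manifold; the gluing issue you flag is real but is handled by the regularized-maximum construction the paper also uses (cf.\ the proof of Theorem \ref{th:MP}).
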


The above result suggests the following definition.

\begin{definition}\label{de:t} A lattice $\Gamma$ in $\mathbb C^n$ is said to be transcendental if the only positive dimensional analytic subvariety of  $X:=\mathbb C^n/\Gamma$ is $X$ itself.
\end{definition}

By Proposition \ref{pr:1.1} and Proposition \ref{pr:hor}, we know that the following criterion implies our first main result --- Theorem \ref{th:main-01}. 

\begin{theorem}[The H\"ormader criterion]\label{th:hor}  Let $\Gamma$ be a lattice in $\mathbb C^n$. If $\iota_\Gamma>1$  then  $\Gamma$ is a set of interpolation for $\mathcal F^2$. In particular, if $\Gamma$ is transcendental and $|\Gamma|>\frac{n^n}{n!}$, then $\Gamma$ is a set of interpolation for $\mathcal F^2$.
\end{theorem}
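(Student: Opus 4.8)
The plan is to run the H\"ormander $\dbar$-$L^2$ argument of Berndtsson--Ortega Cerd\`a \cite{BO} in several variables, using the function supplied by the hypothesis $\iota_\Gamma>1$ as a singular weight. Fix $\gamma\in(1,\iota_\Gamma)$ together with a $\Gamma$-invariant $\psi$ on $\mathbb C^n$ such that $\psi+\pi|z|^2$ is psh and $\psi=\gamma\log\big(|z_1|^{2/\beta_1}+\cdots+|z_n|^{2/\beta_n}\big)$ on some ball $\{|z|<\rho_0\}$, with $\beta_j>0$, $\sum_j\beta_j=1$; by $\Gamma$-invariance the same formula (with $z_j$ replaced by $z_j-\lambda_j$) holds near each $\lambda\in\Gamma$. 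I would first record three properties. First, since $\log\big(\sum_j|z_j|^{2/\beta_j}\big)=\log\sum_j\exp\!\big(\tfrac2{\beta_j}\log|z_j|\big)$ is a $\log$-sum-exp of psh functions, $\psi$ is genuinely psh on each ball $\{|z-\lambda|<\rho_0\}$, so there the weight $\Phi:=\pi|z|^2+\psi$ satisfies $i\partial\dbar\Phi\ge i\partial\dbar(\pi|z|^2)>0$. Second, $\psi$ is $\Gamma$-invariant and upper semicontinuous, hence descends to a bounded-above function on the compact torus $X=\mathbb C^n/\Gamma$, so $e^{-\psi}\ge c_0>0$ on $\mathbb C^n$. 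Third, the weighted-homogeneity scaling $z_j\mapsto t^{\beta_j}z_j$ (under which $\sum_j|z_j|^{2/\beta_j}$ scales by $t^2$ and $dV$ by $t^{2\sum_j\beta_j}=t^2$) shows $\int_{|z|<\varepsilon}e^{-\psi}\,dV=+\infty$ precisely because $\gamma\ge 1$, and likewise near every $\lambda\in\Gamma$.

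Next I would build an approximate interpolant. Set $\delta_0=\tfrac12\min_{0\ne\lambda\in\Gamma}|\lambda|$, fix $\varepsilon<\min(\delta_0,\rho_0/2)$ and a cutoff $\chi\in C_c^\infty(\{|z|<2\varepsilon\})$ with $\chi\equiv1$ on $\{|z|<\varepsilon\}$, and, given $a=\{a_\lambda\}$ with $\sum_\lambda|a_\lambda|^2e^{-\pi|\lambda|^2}=1$, put
\[
\tilde F(z):=\sum_{\lambda\in\Gamma}a_\lambda\,\chi(z-\lambda)\,e^{\pi\bar\lambda\cdot(z-\lambda)},\qquad \bar\lambda\cdot(z-\lambda):=\textstyle\sum_{j}\bar\lambda_j(z_j-\lambda_j).
\]
The balls $\{|z-\lambda|<2\varepsilon\}$ being pairwise disjoint, $\tilde F$ is smooth with $\tilde F(\lambda)=a_\lambda$, and on $\operatorname{supp}\chi(\cdot-\lambda)$ the identity $2\operatorname{Re}\bar\lambda\cdot(z-\lambda)-|z|^2=-|z-\lambda|^2-|\lambda|^2$ gives $|\tilde F(z)|^2e^{-\pi|z|^2}\le|a_\lambda|^2e^{-\pi|\lambda|^2}$; summing, $\|\tilde F\|_{\mathcal F^2}^2\le C_\chi$. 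Since $e^{\pi\bar\lambda\cdot(z-\lambda)}$ is holomorphic, $\dbar\tilde F=\sum_\lambda a_\lambda\,\dbar\chi(z-\lambda)\,e^{\pi\bar\lambda\cdot(z-\lambda)}$ is supported in the disjoint annuli $\{\varepsilon\le|z-\lambda|\le2\varepsilon\}$, where $e^{-\psi}\le M<\infty$; hence $\int_{\mathbb C^n}|\dbar\tilde F|^2e^{-\Phi}\le M\,(\sup|\dbar\chi|)^2\sum_\lambda|a_\lambda|^2e^{-\pi|\lambda|^2}=:C_1$.

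Then I would invoke H\"ormander's $L^2$-estimate for $\dbar$ with a singular psh weight (cf.\ \cite{Dem-sin, BO}): on the pseudoconvex manifold $\mathbb C^n$ with weight $\Phi$ there exists $u$ with $\dbar u=\dbar\tilde F$ and $\int_{\mathbb C^n}|u|^2e^{-\Phi}\le\int_{\mathbb C^n}\big\langle (i\partial\dbar\Phi)^{-1}\dbar\tilde F,\dbar\tilde F\big\rangle\,e^{-\Phi}$. The crucial point is that $\dbar\tilde F$ is supported exactly where, by the first property above, $i\partial\dbar\Phi$ dominates a fixed positive multiple of $\tfrac i2\sum dz_j\wedge d\bar z_j$, so the right-hand side is $\lesssim C_1$. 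Put $F:=\tilde F-u$, which is holomorphic. By the second property $e^{-\pi|z|^2}\le c_0^{-1}e^{-\Phi}$, so $\|u\|_{\mathcal F^2}^2\le c_0^{-1}\int_{\mathbb C^n}|u|^2e^{-\Phi}\lesssim C_1$, and therefore $F\in\mathcal F^2$ with $\|F\|_{\mathcal F^2}^2\le 2C_\chi+2\|u\|_{\mathcal F^2}^2\le C$ for a constant $C$ independent of $a$. Finally, near any $\lambda$ we have $\tilde F(z)=a_\lambda e^{\pi\bar\lambda\cdot(z-\lambda)}$, so $u=a_\lambda e^{\pi\bar\lambda\cdot(z-\lambda)}-F$ is holomorphic there; since $\int_{|z-\lambda|<\varepsilon}|u|^2e^{-\psi}<\infty$ while $e^{-\psi}$ is non-integrable at $\lambda$ (third property), $u$ must vanish at $\lambda$, i.e.\ $F(\lambda)=a_\lambda$. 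Hence $\Gamma$ is a set of interpolation for $\mathcal F^2$.

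The last assertion is immediate: if $\Gamma$ is transcendental, Proposition \ref{pr:hor} gives $\iota_\Gamma=\frac{(n!)^{1/n}}{n}|\Gamma|^{1/n}$, so $|\Gamma|>n^n/n!$ is equivalent to $\iota_\Gamma>1$. I expect the main obstacle to be making the H\"ormander estimate effective despite $\Phi=\pi|z|^2+\psi$ being merely psh and not strictly psh globally; this is resolved by the observation above that the $\dbar$-data is concentrated near the lattice points, where the explicit form of $\psi$ makes it genuinely psh. A secondary subtlety — which dictates the twisting factors $e^{\pi\bar\lambda\cdot(z-\lambda)}$ and the choice to estimate $u$ rather than $\tilde F$ against $e^{-\Phi}$ — is that $\psi$ is non-integrable at the lattice points (exactly what forces the interpolation conditions), so $\tilde F\notin L^2(e^{-\Phi})$ in general; only $F$ and $u$ are controlled against the degenerate weight, while $\tilde F$ is controlled directly in $\mathcal F^2$.
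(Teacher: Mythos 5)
Your argument is correct and is essentially the proof in the paper: the same Berndtsson--Ortega Cerd\`a scheme with the cutoff interpolant $\sum_\lambda a_\lambda \chi(z-\lambda)e^{\pi\bar\lambda\cdot(z-\lambda)}$, a $\dbar$-solution against a weight singular along $\Gamma$, and the non-integrability of $e^{-\gamma T_\beta}$ (forced by $\sum_j\beta_j=1$ and $\gamma\ge1$) to pin down the values at the lattice points. The one place you diverge is the choice of weight: you keep $\Phi=\pi|z|^2+\psi$, which is only plurisubharmonic globally, and therefore must invoke the finer Demailly-type estimate with the inverse curvature operator $(i\partial\dbar\Phi)^{-1}$, justified because $\dbar\tilde F$ is supported in the annuli where $\psi=\gamma T_\beta$ is genuinely psh; the paper instead rescales the singular part and uses $\phi=\pi|z|^2+\psi/\gamma$, so that $\phi-(1-\gamma^{-1})\pi|z|^2$ is psh \emph{everywhere} and the elementary global estimate (Theorem \ref{th:hor1}) applies directly, while the rescaled pole $T_\beta$ still sits exactly at the non-integrability threshold. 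Both routes are valid --- this is precisely where the hypothesis $\gamma>1$ enters for the paper (strict positivity $1-\gamma^{-1}>0$), whereas in your version it enters only through the existence of an admissible $\gamma>1$ realizing the supremum condition $\iota_\Gamma>1$; if you prefer to avoid the singular-weight regularization implicit in the refined estimate, the paper's $\psi/\gamma$ normalization is the cleaner bookkeeping.
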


In order to use the above criterion for non-transcendental lattices we have to investigate the H\"ormander constant in more depth. Our main idea is to introduce the following definition.

\begin{definition}\label{de:beta}
Let $(X, \omega)$ be an $n$-dimensional compact K\"ahler manifold. Fix $x\in X$ and take a holomorphic coordinate chart $z=\{z_j\}$ near $x$ such that $z(x)=0$. Put
\begin{equation}\label{eq:Tbeta0}
T_\beta:=\log(|z_1|^{2/\beta_1}+\cdots+ |z_n|^{2/\beta_n}), \ \ \ \beta_j >0, \ \beta_1+\cdots+\beta_n=1.
\end{equation}
The $\beta$-Seshadri constant of $(X, \omega)$ at $x\in X$ is defined by
\begin{align*}
\epsilon_x(\omega;\beta):=  & \sup\{\gamma\geq 0:  \text{there exists an  $\omega$-psh function $\psi$}  \\
& \text{on $X$ such that $\psi=\gamma T_\beta$ near $x$}\},
\end{align*}
where "$\omega$-psh" means that $\psi$ is upper semi continuous on $X$ with
$$
\omega+dd^c \psi \geq 0, \ d^c:=(\partial-\dbar)/(4\pi i),
$$
in the sense of currents on $X$. 
\end{definition}

\medskip

\noindent
\textbf{Remark.} \emph{In case $X=\mathbb C^n/\Gamma$ and $\omega= dd^c(\pi|z|^2)$, we know that $\epsilon_x(\omega;\beta)$ does not depend on $x\in X$.  Comparing the above definition with Definition \ref{de:hor}, we further get
\begin{equation}\label{eq:hor1}
\iota_\Gamma= \sup_{\beta_1+\cdots+\beta_n=1,\, \beta_j >0, \, 1\leq j\leq n} \epsilon_x(\omega;\beta).
\end{equation}
In case $\beta_1=\cdots=\beta_n=1/n$, we know that $n \epsilon_x(\omega;\beta)$ equals the classical Seshadri constant of Demailly (see \cite[section 4.4]{Tosatti}). A famous result of McDuff--Polterovich is a symplectic embedding formula \cite{MP} for the classical Seshadri constant. Theorem \ref{th:main-02} follows from the following generalization of McDuff--Polterovich's result to all $\beta$-Seshadri constants. }

\medskip

\noindent
\textbf{Theorem A.} \emph{Let $(X, \omega)$ be a compact K\"ahler manifold. Denote by $\mathcal K_\omega$ the space of K\"ahler metrics for the cohomology class $[\omega]$.  Fix  $x\in X$ and $\beta_j >0$, $1\leq j\leq n$ with $\beta_1+\cdots+\beta_n=1$. Then the $\beta$-Seshadri constant $\epsilon_x(\omega;\beta)$  is equal to the following $\beta$-K\"ahler width
\begin{align*}\label{eq:Tbeta}
c_x(\omega;\beta):=\sup \left\lbrace\pi r^2: B^\beta_r \xhookrightarrow{hol_x} (X, \tilde{\omega}), \ \exists\ \text{$\tilde{\omega} \in \mathcal K_\omega$}  \right\rbrace, \ \ B^\beta_r:=\left\lbrace z\in \mathbb C^n: \sum_{j=1}^n \beta_j |z_j|^2 <r^2 \right\rbrace. 
\end{align*}
where
 "$B^\beta_r \xhookrightarrow{hol_x} (X, \tilde{\omega})$" means that there exists a holomorphic injection $f: B^\beta_r \to X$
such that $
f(0)=x$ and  
$f^*(\tilde\omega)=\frac{i}2 \sum_{j=1}^n dz_j \wedge d\bar z_j$.}

\medskip
\noindent
\textbf{Remark.}  \emph{The above theorem implies Theorem \ref{th:main-02}. In fact, by the above theorem and \eqref{eq:hor1}, the assumption in Theorem \ref{th:main-02} implies that $\iota_\Gamma>1$. Hence our second main theorem follows from the H\"ormander criterion, Theorem \ref{th:hor} and Proposition \ref{pr:1.1}.}

\medskip

In case $X=\mathbb C^n/\Gamma$ and $\omega=dd^c(\pi|z|^2)$ is the Euclidean K\"ahler form,  we know that $B_r^\beta$ is included in $X$ if and only if 
$
(\gamma+ B_r^\beta) \cap B_r^\beta =\emptyset, \ \ \forall \ 0\neq \gamma\in \Gamma, 
$
which is equivalent to that
$$
r \leq \frac12 \sqrt{\inf_{0\neq z\in \Gamma} \sum_{j=1}^n\beta_j |z_j|^2}.
$$
Hence Theorem A gives
$$
\epsilon_x(\omega;\beta)= c_x(\omega;\beta) \geq \pi r^2, \  \ \forall \  r \leq \frac12 \sqrt{\inf_{0\neq z\in \Gamma} \sum_{j=1}^n\beta_j |z_j|^2}.
$$
Put
\begin{equation}
\mathcal B:=\{\beta\in\mathbb R^n: \beta_j\geq 0, \ 1\leq j\leq n, \ \beta_1+\cdots+\beta_n=1\},
\end{equation}
then \eqref{eq:hor1} gives
$$
\iota_\Gamma \geq  \frac{\pi}4\sup_{\beta \in \mathcal B} \inf_{0\neq z\in \Gamma} \sum_{j=1}^n\beta_j |z_j|^2.
$$
Apply the H\"ormander criterion above we get:

\begin{corollary}\label{co:tA} Let $\Gamma$ be a lattice in $\mathbb C^n$. If 
$
\sup_{\beta \in \mathcal B} \inf_{0\neq z\in \Gamma} \sum_{j=1}^n\beta_j |z_j|^2 > \frac 4\pi,
$
then the H\"ormander constant $\iota_\Gamma>1$ (see Definition \ref{de:hor}) and $\Gamma$ is a set of interpolation for $\mathcal F^2$.
\end{corollary}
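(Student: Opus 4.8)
The plan is to derive Corollary \ref{co:tA} as a direct consequence of the chain of results already established in this section, so the ``proof'' is essentially an assembly of inequalities. First I would recall that for the Euclidean K\"ahler torus $(X,\omega)$ with $X=\mathbb C^n/\Gamma$ and $\omega=dd^c(\pi|z|^2)$, the ellipsoid $B_r^\beta$ embeds holomorphically and isometrically into $(X,\omega)$ itself (no change of K\"ahler metric needed) precisely when the translates $\gamma+B_r^\beta$ are pairwise disjoint, i.e. when $(\gamma+B_r^\beta)\cap B_r^\beta=\emptyset$ for all $0\neq\gamma\in\Gamma$; unwinding the definition of $B_r^\beta$ this is the condition
\[
r\le \tfrac12\sqrt{\inf_{0\neq z\in\Gamma}\sum_{j=1}^n\beta_j|z_j|^2}.
\]
Hence the $\beta$-K\"ahler width satisfies $c_x(\omega;\beta)\ge \pi r^2$ for every such $r$, and by Theorem A, $\epsilon_x(\omega;\beta)=c_x(\omega;\beta)\ge \tfrac\pi4\inf_{0\neq z\in\Gamma}\sum_{j=1}^n\beta_j|z_j|^2$.

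Next I would take the supremum over $\beta$. Using the identity \eqref{eq:hor1}, $\iota_\Gamma=\sup_{\beta}\epsilon_x(\omega;\beta)$ where the supremum is over $\beta_j>0$ with $\sum\beta_j=1$; combined with the previous bound this gives
\[
\iota_\Gamma\ \ge\ \frac\pi4\sup_{\beta\in\mathcal B}\inf_{0\neq z\in\Gamma}\sum_{j=1}^n\beta_j|z_j|^2.
\]
One small point to address here is that $\mathcal B$ is the closed simplex (allowing some $\beta_j=0$) whereas the definitions of $\iota_\Gamma$ and $\epsilon_x(\omega;\beta)$ use the open simplex; I would note that this is harmless because the function $\beta\mapsto\inf_{0\neq z\in\Gamma}\sum_j\beta_j|z_j|^2$ is continuous (an infimum of linear functions, and the infimum is attained since $\Gamma$ is discrete and the quadratic form is positive on the simplex), so the supremum over the closed simplex is approached from the interior, and taking $\beta$ slightly in the interior loses an arbitrarily small amount.

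Finally, the hypothesis $\sup_{\beta\in\mathcal B}\inf_{0\neq z\in\Gamma}\sum_j\beta_j|z_j|^2>4/\pi$ immediately yields $\iota_\Gamma>1$ from the displayed inequality, and then the H\"ormander criterion (Theorem \ref{th:hor}) gives that $\Gamma$ is a set of interpolation for $\mathcal F^2$. The only genuinely substantive input is Theorem A (the McDuff--Polterovich-type identity $\epsilon_x(\omega;\beta)=c_x(\omega;\beta)$), which we are permitted to assume; everything else is elementary lattice geometry and packing of ellipsoids. I expect the main obstacle to be purely expository: stating cleanly the equivalence between disjointness of the ellipsoid translates and the explicit radius bound, and handling the open-versus-closed simplex technicality, neither of which requires real work but both of which should be spelled out so the corollary reads as self-contained.
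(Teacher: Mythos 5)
Your proposal is correct and follows essentially the same route as the paper, which derives the corollary in the paragraph preceding its statement: disjointness of the lattice translates of $B_r^\beta$ gives the embedding, Theorem A converts this into the bound $\epsilon_x(\omega;\beta)\ge\frac\pi4\inf_{0\neq z\in\Gamma}\sum_j\beta_j|z_j|^2$, and then \eqref{eq:hor1} together with the H\"ormander criterion finishes. Your remark about the open versus closed simplex is a small point the paper passes over silently, and your continuity argument handles it correctly.
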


\noindent
\textbf{Remark.} \emph{In case all $\beta_j$ are equal to $1/n$,
\begin{equation}\label{eq:BS}
n\cdot\inf_{0\neq z\in \Gamma} \sum_{j=1}^n\beta_j |z_j|^2= \inf_{0\neq z\in \Gamma} |z|^2
\end{equation}
 is known as the Buser--Sarnak invariant $m(\Gamma)$ (see \cite{BS, La}, \cite[Theorem 5.3.6]{Laz}) of $\Gamma$.  For general $\beta\in \mathcal B$ we call 
\begin{equation}
m_\beta(\Gamma):= \inf_{0\neq z\in \Gamma} \sum_{j=1}^n\beta_j |z_j|^2
\end{equation} 
the $\beta$-Buser--Sarnak invariant of $\Gamma$. In general, we have that $\sup_{\beta \in \mathcal B}m_\beta(\Gamma)>m(\Gamma)/n$. For example, if
$$
\Gamma:= \mathbb Z(A,0)+\mathbb Z( Ai, 0)+  \mathbb Z(0, B)+\mathbb Z(0, Bi), \ \ A>B>0,
$$
then a direct computation gives
$$
\sup_{\beta \in \mathcal B}m_\beta(\Gamma)=\frac{A^2B^2}{A^2+B^2} > \frac{B^2}2= \frac{m(\Gamma)}{2}.
$$}

\subsection{Part II: Gaussian Gabor frames}\label{se:1.3} In this section we shall show how to apply the preceding results on sets of interpolation in $\mathcal{F}^2$ in Gabor analysis. We use  the same symbol $\Gamma_{\Omega, \Lambda^\circ}$ in \eqref{eq:tr} to denote the underlying lattice
$$
\{(\eta, y)\in\mathbb R^n \times \mathbb R^n: \eta+i y\in \Gamma_{\Omega, \Lambda^\circ}\}
$$
in $\mathbb R^n\times \mathbb R^n$. By a direct computation, we know that the symplectic dual of $\Gamma_{\Omega, \Lambda^\circ}$ is equal to
$$
\Gamma_{\Omega, \Lambda}:=\{({\rm Im}\, \Omega)^{-1/2} (\xi+{\rm Re}\,\Omega x, {\rm Im}\, \Omega x) \in\mathbb R^n\times \mathbb R^n:  (\xi, x)\in \Lambda\}.
$$ 
Hence Proposition \ref{pr:1.1} gives the following:

\begin{corollary}\label{co:ff}  $(g_\Omega,\Lambda)$ defines a frame in $L^2(\mathbb R^n)$ if and only if $(g_{iI}, \Gamma_{\Omega, \Lambda})$ does.
\end{corollary}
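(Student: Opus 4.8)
The plan is to reduce this to Proposition \ref{pr:1.1}, which has already been stated. By Proposition \ref{pr:1.1}, $(g_\Omega,\Lambda)$ is a Gabor frame for $L^2(\mathbb R^n)$ if and only if the lattice $\Gamma_{\Omega,\Lambda^\circ}\subset\mathbb C^n$ is a set of interpolation for $\mathcal F^2$. Applying the same proposition with $\Omega$ replaced by $iI$ and $\Lambda$ replaced by the lattice $\Gamma_{\Omega,\Lambda}$ (viewed inside $\mathbb R^n\times\mathbb R^n\cong\mathbb R^{2n}$), the system $(g_{iI},\Gamma_{\Omega,\Lambda})$ is a Gabor frame if and only if $\Gamma_{iI,(\Gamma_{\Omega,\Lambda})^\circ}$ is a set of interpolation for $\mathcal F^2$. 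So it suffices to show that these two sets of interpolation hypotheses coincide, i.e. that
\[
\Gamma_{iI,\,(\Gamma_{\Omega,\Lambda})^\circ}=\Gamma_{\Omega,\Lambda^\circ}
\]
as lattices in $\mathbb C^n$.

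The key computational step is the identity, asserted in the paragraph preceding the corollary, that the symplectic dual of $\Gamma_{\Omega,\Lambda^\circ}$ (with its $\mathbb R^n\times\mathbb R^n$ identification) equals $\Gamma_{\Omega,\Lambda}$. Granting that, take the symplectic dual of both sides: since $(\Lambda^{\circ})^{\circ}=\Lambda$ for any lattice, we obtain $(\Gamma_{\Omega,\Lambda})^{\circ}=\Gamma_{\Omega,\Lambda^\circ}$. Now observe that for $\Omega=iI$ the map defining $\Gamma$ in \eqref{eq:tr} is trivial: $({\rm Im}\,(iI))^{-1/2}=I$ and $z=\eta+iy$ is just the standard identification $\mathbb R^n\times\mathbb R^n\cong\mathbb C^n$. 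Hence $\Gamma_{iI,M}=M$ for any lattice $M$, under this identification. Therefore $\Gamma_{iI,(\Gamma_{\Omega,\Lambda})^\circ}=(\Gamma_{\Omega,\Lambda})^\circ=\Gamma_{\Omega,\Lambda^\circ}$, which is exactly the equality we need.

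Combining the three displayed equivalences gives the corollary: $(g_\Omega,\Lambda)$ is a frame $\iff$ $\Gamma_{\Omega,\Lambda^\circ}$ interpolates $\mathcal F^2$ $\iff$ $\Gamma_{iI,(\Gamma_{\Omega,\Lambda})^\circ}$ interpolates $\mathcal F^2$ $\iff$ $(g_{iI},\Gamma_{\Omega,\Lambda})$ is a frame. The only step with any real content is verifying the symplectic-dual identity $(\Gamma_{\Omega,\Lambda^\circ})^\circ=\Gamma_{\Omega,\Lambda}$; this is a linear-algebra computation tracking how the $\mathbb R$-linear isomorphism $(\eta,y)\mapsto({\rm Im}\,\Omega)^{-1/2}(\eta+{\rm Re}\,\Omega\,y,\,{\rm Im}\,\Omega\,y)$ interacts with the symplectic form $\sigma((\xi,x),(\eta,y))=\xi^Ty-x^T\eta$, and I expect it to be the main (though routine) obstacle: one checks that this map is a symplectomorphism of $(\mathbb R^{2n},\sigma)$, so it intertwines taking symplectic duals, and then the identity reduces to the already-used fact that $\Gamma_{\Omega,\Lambda^\circ}$ is the symplectic image of $\Lambda^\circ$ together with $(\Lambda^\circ)^\circ=\Lambda$. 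Everything else is formal bookkeeping of the definitions.
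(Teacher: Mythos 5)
Your proposal is correct and follows exactly the route the paper intends: the paper's "proof" of Corollary \ref{co:ff} is precisely the observation that $f_\Omega$ is a symplectomorphism, so $(\Gamma_{\Omega,\Lambda^\circ})^\circ=\Gamma_{\Omega,\Lambda}$ (equivalently $(\Gamma_{\Omega,\Lambda})^\circ=\Gamma_{\Omega,\Lambda^\circ}$), combined with two applications of Proposition \ref{pr:1.1} and the fact that $\Gamma_{iI,M}=M$. You have merely spelled out the bookkeeping the paper compresses into "Hence Proposition \ref{pr:1.1} gives the following," and your reduction of the dual identity to the symplectomorphism property of $f_\Omega$ plus biduality $(\Lambda^\circ)^\circ=\Lambda$ is exactly the intended "direct computation."
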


\medskip

\noindent
\textbf{Remark.} \emph{Notice that $\Gamma_{\Omega, \Lambda}$ is equal to $f_\Omega(\Lambda)$, where 
$$
f_\Omega(\xi, x):=({\rm Im}\, \Omega)^{-1/2} (\xi+{\rm Re}\,\Omega x, {\rm Im}\, \Omega x)
$$
is a linear mapping preserving the standard symplectic form $
\omega:=d\xi^T\wedge dx$ 
on $\mathbb R^n\times \mathbb R^n$. In dimension one, we know that $(g_\Omega, \Lambda)$ defines a frame in $L^2(\mathbb R)$ if and only if $(g_{iI}, \Lambda)$  defines a frame in $L^2(\mathbb R)$ by the Theorem of Lyubarskii-Seip-Wallst\'en. However, the following result implies that this is not the case for multivariate Gabor systems.}

\begin{theorem}\label{th:uniformity} The Gabor system $(g_{iI}, (\mathbb Z\oplus \frac12\mathbb Z)^2)$ does not give a frame in $L^2(\mathbb R^2)$. Note that we have
\begin{itemize}
\item[(1)] There exists an $\mathbb R$-linear isomorphism $f$ of $\mathbb R^4$ preserving the standard symplectic form $d\xi^T\wedge dx$ 
on $\mathbb R^2\times \mathbb R^2$ such that $( g_{iI}, f(\mathbb Z\oplus \frac12\mathbb Z)^2)$ does give a frame in $L^2(\mathbb R^2)$;
\item[(2)] There exists $\Omega\in\mathfrak{H}$ such that $( g_\Omega, (\mathbb Z\oplus \frac12\mathbb Z)^2)$ does give a frame in $L^2(\mathbb R^2)$. Moreover, the set
$
\{\Omega\in \mathfrak H: \text{$( g_\Omega, (\mathbb Z\oplus \frac12\mathbb Z)^2)$ is not a frame}\}
$
is included in a closed analytic subset of the Siegel upper half-space $\mathfrak H$.
\end{itemize}
\end{theorem}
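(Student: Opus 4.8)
The plan is to prove Theorem \ref{th:uniformity} by combining the interpolation/sampling dictionary from Proposition \ref{pr:1.1} (and Corollary \ref{co:ff}) with the H\"ormander criterion Theorem \ref{th:hor}, and to separate the ``bad lattice'' statement from the two ``good perturbation'' statements. First, for the negative assertion that $(g_{iI},(\mathbb Z\oplus\tfrac12\mathbb Z)^2)$ is not a frame: by Corollary \ref{co:ff} and Proposition \ref{pr:1.1} it suffices to show that the symplectic dual lattice $\Gamma$ in $\mathbb C^2$ is not a set of interpolation for $\mathcal F^2$. The lattice $(\mathbb Z\oplus\tfrac12\mathbb Z)^2$ in $\mathbb R^4$ is a product of two one-dimensional lattices, each of covolume $\tfrac12$; its symplectic dual is correspondingly a product of two one-dimensional lattices, each of covolume $2$. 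But a product lattice $\Gamma=\Gamma_1\times\Gamma_2\subset\mathbb C\times\mathbb C$ is a set of interpolation for $\mathcal F^2(\mathbb C^2)$ only if each factor $\Gamma_j$ is a set of interpolation for $\mathcal F^2(\mathbb C)$, by restricting to slices (fix the value on all of $\{c\}\times\Gamma_2$ to be a product $a_\lambda = b_{\lambda_1} c_{\lambda_2}$ and test the $L^2$ norm via Fubini, reducing to the one-variable interpolation constant which blows up). Actually the clean way is the density/covolume side: one has $|\Gamma_j|=2>1$ for each factor but we want non-interpolation, so I should instead exhibit the obstruction directly --- the relevant point is that the product torus $X=\mathbb C^2/\Gamma$ has a proper analytic subvariety (an elliptic curve $\mathbb C/\Gamma_1\times\{0\}$), so Proposition \ref{pr:hor} does not apply, and one checks by a direct one-variable argument on the slice that interpolation fails because the one-dimensional factor sits exactly at the critical density $|\Gamma_j|=1$ after rescaling. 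The honest computation: $(\mathbb Z\oplus\tfrac12\mathbb Z)$ has covolume $\tfrac12<1$ so $(g_{iI},\mathbb Z\oplus\tfrac12\mathbb Z)$ \emph{is} a frame in $L^2(\mathbb R)$ in one dimension; the failure in two dimensions is genuinely a multivariate phenomenon, and the cleanest route is to invoke Lindholm's necessary condition together with the product structure, or to use that a set of interpolation must be uniformly separated in a strong sense that the slice structure violates.

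For part (1), the idea is that by Theorem \ref{th:hor} it is enough to produce, via a symplectic $\mathbb R$-linear change of coordinates $f$ on $\mathbb R^4$, a new lattice $\Gamma_{f}$ in $\mathbb C^2$ that is transcendental and has covolume $|\Gamma_f|>n^n/n! = 4/2 = 2$. Since $f$ preserves the symplectic form, the covolume of the symplectic dual is unchanged; so we need to rotate/shear $(\mathbb Z\oplus\tfrac12\mathbb Z)^2$ inside $\mathrm{Sp}(4,\mathbb R)$ until the associated complex torus $X=\mathbb C^2/\Gamma_{\Omega,\Lambda^\circ}$ has no curves. The covolume of the dual of $(\mathbb Z\oplus\tfrac12\mathbb Z)^2$ is $2\cdot2=4>2$, so the covolume condition is already satisfied (it is symplectically invariant, hence unchanged by $f$); the only thing $f$ must achieve is transcendence. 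By the Remark after Definition \ref{de:tr} (the Shafarevich argument, \cite[p.~160--163]{Sha}), transcendence holds for almost all lattices, and $\mathrm{Sp}(4,\mathbb R)$ acts with enough freedom that a generic $f$ lands on a transcendental lattice; one picks an explicit $f$ whose image lattice has period matrix with transcendentally independent entries (e.g.\ built from $\pi$, $\sqrt2$, etc.\ as in the Remark). Then Theorem \ref{th:main-01}/Theorem \ref{th:hor} gives the frame property.

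For part (2), observe that varying $\Omega\in\mathfrak H$ with $\Lambda=(\mathbb Z\oplus\tfrac12\mathbb Z)^2$ fixed changes the lattice $\Gamma_{\Omega,\Lambda^\circ}$ holomorphically in $\Omega$, and again its covolume is $4>2$ for every $\Omega$ (the covolume is $|\Lambda^\circ|$ up to the fixed Jacobian factor $\det(\mathrm{Im}\,\Omega)^{-1/2}\cdot\det\mathrm{Im}\,\Omega = \det(\mathrm{Im}\,\Omega)^{1/2}$ times determinants that combine correctly --- one checks $|\Gamma_{\Omega,\Lambda^\circ}| = |\Lambda^\circ|$). By the H\"ormander criterion the frame property holds whenever $\Gamma_{\Omega,\Lambda^\circ}$ is transcendental, i.e.\ whenever $X_\Omega := \mathbb C^2/\Gamma_{\Omega,\Lambda^\circ}$ carries no curve. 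The set of $\Omega$ for which $X_\Omega$ \emph{does} carry a positive-dimensional subvariety is, by the theory of complex tori (a curve exists iff the period matrix satisfies a Riemann-type relation / the N\'eron--Severi group is nontrivial in the relevant way), contained in a countable union of proper analytic subsets of $\mathfrak H$; sharpening this to a single closed analytic subset requires the argument of \cite[p.~160--163]{Sha} which shows the locus of non-simple (more precisely, curve-containing) abelian-type tori is analytic. Picking any $\Omega$ outside this locus gives the desired frame, proving the existence statement, and the structure statement is exactly that this exceptional locus is analytic and closed.

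The main obstacle I anticipate is the negative statement in the opening sentence of the theorem --- proving that $(g_{iI},(\mathbb Z\oplus\tfrac12\mathbb Z)^2)$ genuinely \emph{fails} to be a frame. The H\"ormander criterion only gives sufficiency, so it cannot produce a non-frame; one needs a necessary condition for interpolation that is sensitive to the reducible structure of the product torus. The cleanest available tool is the product reduction: if $\Gamma=\Gamma_1\times\Gamma_2$ were a set of interpolation for $\mathcal F^2(\mathbb C^2)$ then, testing on data supported on a single slice and integrating out the other variable, each $\Gamma_j$ would have to be a set of interpolation for $\mathcal F^2(\mathbb C)$ with a uniform constant --- but here one factor is a lattice of covolume $2$ which \emph{is} interpolating in one variable, so this does not immediately give a contradiction, and the real point must be subtler (the genuinely multivariate obstruction coming from the positive-dimensional subtorus forcing a Nyquist-type failure along the curve). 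I would therefore spend the most care formalizing this slice argument, most likely by reducing via Corollary \ref{co:ff} to a product Gabor system and invoking the known one-dimensional Balian--Low/density obstruction applied fiberwise, together with the observation that the product Gabor frame is a frame iff both factors are --- and $(g_{iI},\mathbb Z\oplus\tfrac12\mathbb Z)$ being a frame but the relevant \emph{dual-side} factor sitting at covolume exactly matching a non-interpolating threshold is where the contradiction must be extracted.
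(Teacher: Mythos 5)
Your treatment of the opening (negative) assertion never closes, and the reason is that you have taken the wrong product decomposition of the lattice. In the paper's coordinates $\Lambda\subset\mathbb R^2_\xi\times\mathbb R^2_x$, the lattice $(\mathbb Z\oplus\frac12\mathbb Z)^2$ is $\{(\xi,x):\xi\in\mathbb Z\oplus\frac12\mathbb Z,\ x\in\mathbb Z\oplus\frac12\mathbb Z\}$, so the time--frequency lattice attached to the \emph{first} variable is $\mathbb Z\times\mathbb Z$, of covolume exactly $1$. The Gaussian system is a tensor product of two one-variable Gaussian systems, and a tensor product is a frame only if each factor is (test on $f_1\otimes f_2$); the first factor fails by the strict density statement, Theorem \ref{th:sam1}. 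That is the whole argument. You instead read $(\mathbb Z\oplus\frac12\mathbb Z)^2$ as the square of the one-variable time--frequency lattice $\mathbb Z\oplus\frac12\mathbb Z$ of covolume $\frac12$; under that reading both factors \emph{are} frames, which is precisely why your slice argument finds no contradiction and why you are driven to speculate about a ``subtler multivariate obstruction.'' There is none; the correct decomposition makes the failure a one-dimensional fact.

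Your strategy for parts (1) and (2) --- choose $f$, respectively $\Omega$, so that the associated complex torus is transcendental and invoke Theorem \ref{th:main-01} --- cannot be carried out. Since $\Lambda^\circ=(\mathbb Z\oplus2\mathbb Z)^2\subset\Lambda$, the standard symplectic form is integer-valued on $\Lambda^\circ\times\Lambda^\circ$; this is preserved by every symplectomorphism $f$ and, as one checks from \eqref{eq:tr}, the Euclidean K\"ahler form on $\mathbb C^2/\Gamma_{\Omega,\Lambda^\circ}$ restricts on the lattice to exactly this integral pairing for \emph{every} $\Omega\in\mathfrak H$. Hence the torus is always a polarized abelian surface of type $(1,4)$, always contains curves (e.g.\ theta divisors), and is \emph{never} transcendental in the sense of Definition \ref{de:t}; genericity in $\mathrm{Sp}(4,\mathbb R)$ or in $\mathfrak H$ buys you nothing here. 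The paper's proof goes a different way: it identifies the family over $\mathfrak H$ with the moduli of type-$(1,4)$ polarized abelian surfaces, uses that the generic member has Picard number one, and quotes Bauer's computation of the Seshadri constant, $8/\sqrt{8+1}=8/3>2=n$, which gives $\iota_\Gamma\geq\epsilon_0(\omega)/n>1$ and hence interpolation by the H\"ormander criterion; part (1) is then deduced from part (2) via the remark after Corollary \ref{co:ff}. To repair your argument you would have to replace the transcendence step by such a lower bound on Seshadri constants of abelian surfaces, which is the genuinely new input you are missing.
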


In the one-dimensional case, we obtain the following estimates thanks to Faltings' Green function formula \cite{Faltings} (see also section \ref{se:5.3}). 

\medskip 
\noindent
\textbf{Theorem B.} \emph{Let $\Lambda$ be a lattice in $\mathbb R\times \mathbb R$. Put
$$
\Gamma:=\{z \in\mathbb C: z=\eta+i y, \ \ (\eta, y)\in \Lambda^\circ\}, \ \ C:=\frac{\pi}{4} \cdot \inf_{0\neq \lambda\in \Gamma} |\lambda|^2.
$$
Then $|\Lambda|^{-1}=|\Gamma| \geq C$ and we have the following estimates:
\begin{itemize}
\item[(1)] If  $
C \geq 2$ then for all $f\in L^2(\mathbb R)$ with $||f||=1$, we have
\begin{equation}\label{eq:frame-1}
\frac{e}{4|\Lambda|} \leq \sqrt 2\cdot \sum_{\lambda\in\Lambda} |(f, \pi_\lambda g_{iI})|^2 \leq \frac{1}{(1-e^{-C})|\Lambda|};
\end{equation}
\item[(2)]  If  $
1<C< 2$ then for all $f\in L^2(\mathbb R)$ with $||f||=1$, we have
\begin{equation}\label{eq:frame-2}
\frac{(C-1) e}{C^2|\Lambda|} \leq \sqrt 2\cdot \sum_{\lambda\in\Lambda} |(f, \pi_\lambda g_{iI})|^2 \leq \frac{1}{(1-e^{-C})|\Lambda|};
\end{equation}
\item[(3)] The most general case. Suppose that $\Lambda$'s symplectic dual lattice $\Gamma$ in $\mathbb C$ is generated by $\{a, \tau\}$ with $a>0$ and $a\,{\rm Im}\, \tau>1$. Then for all $f\in L^2(\mathbb R)$ with $||f||=1$, we have
\begin{equation}\label{eq:frame-4}
 \frac{4\pi(a\, {\rm Im}\, \tau-1)|\eta(\tau/a)|^6 }{\left(\sum_{n\in\mathbb Z} e^{-\pi n^2 {\rm Im}\, \tau/a}\right)^2} \leq \sqrt 2\cdot \sum_{\lambda\in\Lambda} |(f, \pi_\lambda g_{iI})|^2 \leq \frac{a\,{\rm Im}\, \tau}{1-e^{-C}},
\end{equation}
where
$\eta(\tau):=e^{\pi i\tau/12} \,\Pi_{n=1}^\infty (1-e^{2\pi i n \tau})$ is the Dedekind eta function.
\end{itemize}}

\medskip
\noindent
\textbf{Remark.} \emph{With the notation in (3), we have
$$
|\Lambda|^{-1}=|\Gamma|=a\,{\rm Im}\,\tau,
$$
thus one may look at \eqref{eq:frame-4} as an effective version of  \cite[Theorem 1.1]{BGL} (or the corresponding Theorem \ref{th:LSW}). By our definition \eqref{eq:BS} of the Buser--Sarnak constant, we have
$$
C=\frac{\pi}{4} m(\Gamma).
$$
The lower bound estimate in \eqref{eq:frame-4} is based on a precise Robin constant estimate (see Theorem \ref{th:robin}). For the upper bound, note that by  \cite[Theorem 37]{Siegel}, we have
$$
C \leq \frac{\pi}{2\sqrt 3}\, |\Gamma|,
$$
where equality holds if and only if $\Gamma$ is the hexagonal lattice. Hence the best upper bound in \eqref{eq:frame-4} (for fixed $a\,{\rm Im}\,\tau$)  is attached if and only if $\Gamma$ is the hexagonal lattice. This fact is compatible with the Strohmer--Beaver conjecture (see \cite[section 2.4]{FS}). However, the upper bound in \eqref{eq:frame-4} is not optimal in general.}

\section{Preliminaries}

\subsection{Gabor transform and Bargmann transform}

For $\Omega\in\mathfrak{H}$ (see \eqref{eq:S}) we identify $\mathbb{R}^n\times\mathbb{R}^n$ with $\mathbb{C}^n$ via $(\xi,x)\mapsto z:=\xi+\Omega x$. We call the short-time Fourier transform of $f\in L^2(\mathbb{R}^n)$ with respect to $$g_{\Omega}(t)=\overline{e^{\pi i t^T\Omega t}}$$ the Gabor transform of $f$:
$$
\mathcal V_{g_\Omega}f (\xi,x)=\int_{\mathbb R^n} f(t) e^{-2\pi i \xi^T t} \overline{g_{\Omega}(t-x)} dt_1\cdots dt_n.
$$
The latter can be related to the \emph{$\Omega$-Bargmann transform}
$$
\mathcal B_\Omega  f(z):=\int_{\mathbb R^n} f(t) e^{\pi i t^T\Omega t}e^{-2\pi i z^T t} dt_1\cdots dt_n.
$$
as follows:
$$
\mathcal V_{g_\Omega}f (\xi,x)= \mathcal B_\Omega  f(z) e^{\pi i x^T\Omega x}.
$$
By Moyal's identity we have (note than $||g_\Omega||^2=(2^n \det ({\rm Im}\,\Omega))^{-1/2}$)
\begin{equation}\label{eq:iso1}
 \int_{\mathbb C^n} |\mathcal B_\Omega f(z)|^2 e^{-2\pi x^T {\rm Im\,}\Omega  x} d\xi_1 \wedge dx_1 \wedge \cdots \wedge d\xi_n \wedge dx_n =(2^n \det ({\rm Im}\,\Omega))^{-1/2}\cdot ||f||^2.
\end{equation}
Since $z=\xi+\Omega x$ implies that
$$
d\xi_1 \wedge dx_1 \wedge \cdots \wedge d\xi_n \wedge dx_n = (\det ({\rm Im}\,\Omega))^{-1}  \left( \frac i2 \partial\dbar |z|^2\right)^n,
$$
we know that \eqref{eq:iso1} gives
\begin{equation}\label{eq:iso1new}
(\det ({\rm Im}\,\Omega))^{-1} ||\mathcal B_\Omega f||_\Omega^2 =(2^n \det ({\rm Im}\,\Omega))^{-1/2}\cdot ||f||^2,
\end{equation}
where
$$
||\mathcal B_\Omega f||_\Omega^2:=\int_{\mathbb C^n} |\mathcal B_\Omega f(z)|^2 e^{-2\pi x^T {\rm Im\,}\Omega  x} \left( \frac i2 \partial\dbar |z|^2\right)^n.
$$
Sometimes we shall omit the Lebesgue volume form $\left( \frac i2 \partial\dbar |z|^2\right)^n$ in the above. Now we are ready to introduce the following definition. 

\begin{definition}\label{de:phi_o} Fix $\Omega\in \mathfrak{H}$ and put (notice that $({\rm Im}\, \Omega) x= {\rm Im}\, z$)
$$
\phi_\Omega(z):=x^T {\rm Im\,}\Omega \,  x= ({\rm Im}\,z)^T({\rm Im}\, \Omega)^{-1} {\rm Im}\, z.
$$ 
We call the space of holomorphic functions $F$ on $\mathbb C^n$ with
$$
||F||^2_\Omega :=\int_{\mathbb C^n} |F(z)|^2 e^{-2\pi \phi_\Omega(z)} <\infty, 
$$
 the  $\Omega$-Bargmann--Fock space and denote it by $\mathcal F^2_\Omega$. 
\end{definition}

\noindent
\textbf{Remark.} \emph{In case $\Omega= i I_n$, where $I_n$ denotes the identity matrix, $\mathcal F^2_\Omega$ is precisely the following \emph{classical Bargmann--Fock space} (see \cite[page 7]{Mumford} for the related Von-Neumann--Stone theorem)
\begin{equation}\label{eq:bf-classical}
\mathcal B:=\left\lbrace F\in \mathcal O(\mathbb C^n): \int_{\mathbb C^n} |F(z)|^2 e^{-2\pi |{\rm Im}\, z|^2}  \, d\xi_1\cdots d\xi_n dx_1\cdots dx_n <\infty \right\rbrace.
\end{equation}
In time-frequency analysis, the Bargmann--Fock space $\mathcal F^2$  in \eqref{eq:BF-0} is more widely used. But these two spaces are naturally isomorphic to each other since
$$
|F(z)|^2 e^{-2\pi \phi_{i I_n}(z)} = |F(z) e^{-\pi z^T z/2}|^2 e^{-\pi z^T z}.
$$}

Let us briefly recall the basics of Gaussian Gabor frames: We associate to the Gabor system $(g_\Omega,\Lambda)$ the following operators: 
\begin{itemize}
    \item \emph{analysis operator} $C^\Lambda_{g_\Omega}$ is a map from $L^2(\mathbb R^n)$ to $l^2(\Lambda)$ defined by  $f\mapsto \{(f, \pi_\lambda g_\Omega)\}_{\lambda\in \Lambda}$; 
    \item \emph{synthesis operator:} $D_{g_\Omega}^\Lambda$ is a map from $l^2(\Lambda)$ to $L^2(\mathbb R^n)$ given by $D_{g_\Omega}^\Lambda c=\sum_{\lambda\in \Lambda} c_\lambda \pi_\lambda {g_\Omega}$;
    \item \emph{frame operator:} $S_{g_\Omega}^\Lambda:=D_{g_\Omega}^\Lambda \circ C^\Lambda_{{g_\Omega}}$ is an operator on $L^2(\mathbb{R}^n)$: 
    \[
    S_g^\Lambda f=\sum_{\lambda\in \Lambda} (f, \pi_\lambda g_\Omega) \pi_\lambda g_\Omega.
    \]
\end{itemize}
An elementary computation shows that $(C^\Lambda_{g_\Omega})^*=D_{g_\Omega}^\Lambda$ and thus $S_{g_\Omega}^\Lambda:=(C_{g_\Omega}^\Lambda)^\ast \circ C^\Lambda_{g_\Omega}$ is a selfadjoint operator. The following result is well known, see \cite{Gr01}.

\begin{lemma}\label{le:bdd} For an arbitrary lattice $\Lambda$ in $\mathbb R^{n} \times \mathbb R^n$, the coefficient operator $C^\Lambda_{g_\Omega}$ from $L^2(\mathbb R^n)$ to $l^2(\Lambda)$ is bounded. 
\end{lemma}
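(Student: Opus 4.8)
The plan is to reduce the statement, via the identification of the Gabor transform with the $\Omega$-Bargmann transform recorded above, to a sampling inequality in the $\Omega$-Bargmann--Fock space $\mathcal F^2_\Omega$, and then to prove that inequality by a sub-mean value argument. First I note that for $\lambda=(\xi,x)\in\Lambda$ and $z=\xi+\Omega x$ one has $(f,\pi_\lambda g_\Omega)=\mathcal V_{g_\Omega}f(\xi,x)=\mathcal B_\Omega f(z)\,e^{\pi i x^T\Omega x}$ with $|e^{\pi i x^T\Omega x}|^2=e^{-2\pi x^T(\mathrm{Im}\,\Omega)x}=e^{-2\pi\phi_\Omega(z)}$, that $\mathcal B_\Omega f$ is entire, and that $\|\mathcal B_\Omega f\|_\Omega^2=c_\Omega\|f\|^2$ for a constant $c_\Omega>0$ depending only on $\Omega$ by \eqref{eq:iso1new}. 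Writing $L$ for the image of $\Lambda$ under the $\mathbb R$-linear isomorphism $(\xi,x)\mapsto\xi+\Omega x$ (again a lattice in $\mathbb C^n$), it therefore suffices to prove that there is $B_{\Omega,L}>0$ with
\[
\sum_{w\in L}|F(w)|^2 e^{-2\pi\phi_\Omega(w)}\le B_{\Omega,L}\,\|F\|_\Omega^2\qquad\text{for all }F\in\mathcal F^2_\Omega .
\]

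The key step is a uniform sub-mean value estimate: there is $C_\Omega>0$, depending only on $n$ and $\Omega$, such that
\[
|F(w_0)|^2 e^{-2\pi\phi_\Omega(w_0)}\le C_\Omega\int_{B(w_0,1)}|F(w)|^2 e^{-2\pi\phi_\Omega(w)}\,dV(w),\qquad w_0\in\mathbb C^n,\ F\in\mathcal O(\mathbb C^n).
\]
To prove this I would use that $\phi_\Omega$ is a real quadratic polynomial with constant complex Hessian $M_\Omega=\tfrac12(\mathrm{Im}\,\Omega)^{-1}$, which is positive definite; its second-order Taylor expansion at $w_0$ is exact and reads $\phi_\Omega(w)=\mathrm{Re}\,h_{w_0}(w)+(w-w_0)^*M_\Omega(w-w_0)$ for a holomorphic polynomial $h_{w_0}$ with $\mathrm{Re}\,h_{w_0}(w_0)=\phi_\Omega(w_0)$. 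Hence, with $G:=F\,e^{-\pi h_{w_0}}\in\mathcal O(\mathbb C^n)$,
\[
|F(w)|^2 e^{-2\pi\phi_\Omega(w)}=|G(w)|^2\,e^{-2\pi(w-w_0)^*M_\Omega(w-w_0)},
\]
and applying the ordinary sub-mean value inequality to the plurisubharmonic function $|G|^2$ over $B(w_0,1)$, together with the lower bound $e^{-2\pi(w-w_0)^*M_\Omega(w-w_0)}\ge e^{-2\pi\|M_\Omega\|}$ valid there, gives the estimate with $C_\Omega=\mathrm{vol}(B_1)^{-1}e^{2\pi\|M_\Omega\|}$, independent of $w_0$.

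Granting this, I sum over $L$ and use positivity of the integrand:
\[
\sum_{w_0\in L}|F(w_0)|^2 e^{-2\pi\phi_\Omega(w_0)}\le C_\Omega\!\int_{\mathbb C^n}\!\Big(\sum_{w_0\in L}\mathbf 1_{B(w_0,1)}\Big)|F|^2 e^{-2\pi\phi_\Omega}\,dV\le C_\Omega\,N(L)\,\|F\|_\Omega^2,
\]
where $N(L):=\sup_{w}\#\{w_0\in L:|w-w_0|<1\}<\infty$ because a lattice is uniformly discrete, so only finitely many of the unit balls $B(w_0,1)$, $w_0\in L$, meet any given point. Combining this with $\|\mathcal B_\Omega f\|_\Omega^2=c_\Omega\|f\|^2$ and the reduction of the first paragraph proves the lemma, with the explicit bound $\|C^\Lambda_{g_\Omega}\|^2\le c_\Omega C_\Omega N(L)$.

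I do not expect a serious obstacle; this is the standard boundedness fact from Gabor analysis (see \cite{Gr01}), and the only point needing a little care is the uniformity in $w_0$ of the constant $C_\Omega$, which is exactly what the quadratic (constant-Hessian) nature of $\phi_\Omega$ supplies. A coordinate-free alternative would be to stay on the time-frequency side: the orthogonality relations for $\mathcal V_{g_\Omega}$ give the pointwise reproducing bound $|\mathcal V_{g_\Omega}f|\le\|g_\Omega\|^{-2}\,(|\mathcal V_{g_\Omega}f|*|\mathcal V_{g_\Omega}g_\Omega|)$ on $\mathbb R^{2n}$, and since $|\mathcal V_{g_\Omega}g_\Omega|$ is a Gaussian one concludes by Cauchy--Schwarz and the uniform discreteness of $\Lambda$ in the same way.
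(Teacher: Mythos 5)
Your proof is correct and follows essentially the same route as the paper: reduce via the $\Omega$-Bargmann transform to a sampling-type upper bound in the Bargmann--Fock space, then apply a sub-mean value inequality on balls around the lattice points and use uniform discreteness. The only (cosmetic) differences are that you work directly with the quadratic weight $\phi_\Omega$ and overlapping unit balls of bounded multiplicity, while the paper first changes variables to the standard $\mathcal F^2$ and uses disjoint balls of radius half the minimal lattice distance.
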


\begin{proof} We shall give a proof for readers' convenience. Using the $\Omega$-Bargmann transform, it suffices to show that 
$$
\sum_{z\in \Gamma} |F(z)|^2e^{-2\pi ({\rm Im}\,z)^T({\rm Im}\, \Omega)^{-1} {\rm Im}\, z} \leq C \, ||F||^2_\Omega, \ \ \forall \ F\in \mathcal F^2_\Omega,
$$
where $\Gamma:=\{z=\xi+\Omega x: (\xi,x)\in \Lambda\}$. Since for $w:=({\rm Im}\, \Omega)^{-1/2}z$, we have
$$
|F(z)|^2 e^{-2\pi ({\rm Im}\,z)^T({\rm Im}\, \Omega)^{-1} {\rm Im}\, z}=|F(({\rm Im}\, \Omega)^{1/2}w) e^{\frac\pi 2 w^T w}|^2 e^{-\pi |w|^2}, \
$$
we know the above inequality is equivalent to that
$$
\sum_{w\in \Gamma'} |F(w)|^2e^{-\pi|w|^2} \leq C \, ||F||^2, \ \ \forall \ F\in \mathcal F^2,
$$
where $\Gamma':=\{w=({\rm Im}\, \Omega)^{-1/2}(\xi+\Omega x): (\xi,x)\in \Lambda\}$. By the submean inequality, we know that the above inequality is true for
$$
C=\int_{|w|<R} e^{-\pi|w|^2}, \ \ R:=\inf_{w\in \Gamma', \, w\neq 0} |w|/2,
$$
hence the lemma follows.
\end{proof}

There is a fundamental duality theory (see \cite{DLL, Janssen95, RS, CKL} and \cite[Theorem 4.22]{JL}) that links the Gabor system $(g_\Omega,\Lambda)$ with another Gabor system associated to the symplectic dual lattice/adjoint lattice defined in Definition \ref{de:tr}. 

\begin{theorem}[Duality Theorem]\label{th:dual-franz}
  $(g_\Omega,\Lambda)$ is a Gabor frame for $L^2(\mathbb{R}^n)$ with bounds $A$ and $B$ if and only if $\{\pi_{\lambda^\circ}g_\Omega\}_{\lambda^\circ\in\Lambda^\circ}$ is a Riesz sequence with bounds $A|\Lambda|$ and $B|\Lambda|$,  i.e. we have 
  \[A|\Lambda|\,\|c\|^2\le\|\sum_{\lambda^\circ \in\Lambda^\circ}c_{\lambda^\circ}\pi_{\lambda^\circ}g_\Omega\|^2 \le B|\Lambda|\,\|c\|^2\]
  for all $c\in\ell^2(\Lambda^\circ)$.
\end{theorem}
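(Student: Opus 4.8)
The plan is to exploit that the Gaussian window $g_\Omega$ is a Schwartz function — so that every series below converges absolutely — and to recognize both sides of the asserted equivalence as spectral conditions on one and the same element of the twisted group algebra of the adjoint lattice $\Lambda^\circ$. First I would set up the picture: since $g_\Omega\in\mathcal S(\mathbb R^n)$, the sequence $a:=(\langle g_\Omega,\pi_{\lambda^\circ}g_\Omega\rangle)_{\lambda^\circ\in\Lambda^\circ}$ is the short-time Fourier transform $\mathcal V_{g_\Omega}g_\Omega$ sampled on $\Lambda^\circ$, hence of Gaussian decay, so $a\in\ell^1(\Lambda^\circ)$. By the argument proving Lemma~\ref{le:bdd}, applied to $\Lambda$ and to $\Lambda^\circ$, both $(g_\Omega,\Lambda)$ and the adjoint family $\{\pi_{\lambda^\circ}g_\Omega\}_{\lambda^\circ\in\Lambda^\circ}$ are Bessel; thus the frame operator $S:=S_{g_\Omega}^\Lambda$ on $L^2(\mathbb R^n)$ and the Gram operator $G$ on $\ell^2(\Lambda^\circ)$ with matrix $G_{\mu,\nu}=\langle\pi_\mu g_\Omega,\pi_\nu g_\Omega\rangle$ are bounded and selfadjoint, and the two conditions to compare become the operator inequalities $A\,\mathrm{Id}\le S\le B\,\mathrm{Id}$ and $A|\Lambda|\,\mathrm{Id}\le G\le B|\Lambda|\,\mathrm{Id}$ (the Riesz-sequence property — a Riesz basis for its closed span — being precisely the latter with $A>0$).

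The core is Janssen's representation of the frame operator: for a Schwartz window the Fundamental Identity of Gabor Analysis (a Poisson-summation identity for short-time Fourier transforms) gives $S=|\Lambda|^{-1}\sum_{\lambda^\circ\in\Lambda^\circ}\langle g_\Omega,\pi_{\lambda^\circ}g_\Omega\rangle\,\pi_{\lambda^\circ}=|\Lambda|^{-1}\rho(a)$, the series converging in operator norm, where $\rho$ is the integrated form of the projective representation $\lambda^\circ\mapsto\pi_{\lambda^\circ}$ of $\Lambda^\circ$ on $L^2(\mathbb R^n)$. On the other side, the composition law $\pi_\mu\pi_\nu=c(\mu,\nu)\,\pi_{\mu+\nu}$ of time-frequency shifts shows directly that $G$ is the twisted convolution operator $L_a$ by the same sequence $a$ on $\ell^2(\Lambda^\circ)$, i.e.\ the image of $a$ under the left regular projective representation of $\Lambda^\circ$. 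Hence $|\Lambda|\,S$ and $G$ are the images of one selfadjoint element $a$ of the twisted group $*$-algebra $\ell^1(\Lambda^\circ)$ under two $*$-representations, and the theorem is exactly the spectral identity $\mathrm{spec}(\rho(a))=\mathrm{spec}(L_a)$.

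To prove that identity: since $\Lambda^\circ\cong\mathbb Z^{2n}$ is amenable, the full and reduced twisted group $C^*$-algebras of $\Lambda^\circ$ coincide, so $\mathrm{spec}(\rho(a))\subseteq\mathrm{spec}_{C^*}(a)=\mathrm{spec}(L_a)$ is automatic. The reverse inclusion is equivalent to faithfulness of the ``Heisenberg module'' representation $\rho$ of $C^*(\Lambda^\circ,c)$ on $L^2(\mathbb R^n)$; I would obtain it from the standard trace argument — the canonical trace $\tau(b)=b_0$ on $C^*(\Lambda^\circ,c)$ is faithful, and the orthogonality relations for the short-time Fourier transform identify $\tau$, up to a positive constant, with a faithful normal trace on the von Neumann algebra $\rho(\Lambda^\circ)''$, so $\rho$ annihilates no nonzero positive element. (Conceptually this is Rieffel's Morita equivalence of noncommutative tori realized by the Heisenberg bimodule; alternatively one simply invokes the Ron--Shen--Janssen duality principle of \cite{DLL,Janssen95,RS,CKL} and \cite[Theorem~4.22]{JL}.) Once $\mathrm{spec}(\rho(a))=\mathrm{spec}(L_a)$ is in hand, the two operator inequalities of the first paragraph are equivalent, the factor $|\Lambda|^{-1}$ in Janssen's representation producing exactly the constants $A|\Lambda|$ and $B|\Lambda|$, which is the theorem.

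The main obstacle is the faithfulness claim in the third step: it is false for a general locally compact abelian group in place of a lattice, and it rests genuinely on the Stone--von Neumann phenomenon (for irrational deformations it reduces to the simplicity of the noncommutative torus). Everything else — convergence of the series, the effect of the $2$-cocycle $c$ under composition of time-frequency shifts, and the passage between operator bounds and frame/Riesz bounds — is routine bookkeeping, made painless by $g_\Omega$ being Schwartz.
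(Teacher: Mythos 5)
Your outline is essentially correct, but note first that the paper does not prove Theorem \ref{th:dual-franz} at all: it is quoted as a known result with references to \cite{DLL, Janssen95, RS, CKL} and \cite[Theorem 4.22]{JL}. What you have written is a reconstruction of the proof that lives in those references (most closely the Heisenberg-module/noncommutative-torus formulation of \cite{JL}), so the comparison is between your proof and the literature rather than with anything in the paper. The skeleton is sound: $a_{\lambda^\circ}=(g_\Omega,\pi_{\lambda^\circ}g_\Omega)$ has Gaussian decay, so Janssen's representation $|\Lambda|\,S_{g_\Omega}^\Lambda=\rho(a)$ converges absolutely (the window satisfies Condition A); the Gram operator of $\{\pi_{\lambda^\circ}g_\Omega\}$ is twisted convolution by the same selfadjoint element $a$ of $\ell^1(\Lambda^\circ)$; a faithful $*$-representation of a $C^*$-algebra is isometric and spectrum-preserving, and $C^*$-subalgebras of $B(H)$ are inverse-closed, so faithfulness of both representations on the (full $=$ reduced, by amenability of $\mathbb Z^{2n}$) twisted group $C^*$-algebra yields $\mathrm{spec}(\rho(a))=\mathrm{spec}(L_a)$, which translates the operator inequalities $A\le S\le B$ and $A|\Lambda|\le G\le B|\Lambda|$ into one another with exactly the stated constants. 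What this approach buys over a bare citation is a conceptual explanation of the factor $|\Lambda|$ and of why frame and Riesz bounds transform into each other exactly.

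Two caveats. First, your parenthetical ``alternatively one simply invokes the Ron--Shen--Janssen duality principle'' is circular here, since that principle \emph{is} Theorem \ref{th:dual-franz}; the trace/Morita argument must carry the weight. Second, the two load-bearing steps are stated rather than proved: (i) faithfulness of the integrated projective representation $\rho$ of $C^*(\Lambda^\circ,c)$ on $L^2(\mathbb R^n)$ --- you correctly identify this as the crux, but ``the orthogonality relations identify $\tau$ with a faithful normal trace on $\rho(\Lambda^\circ)''$'' needs an actual computation (e.g.\ the vector-state average over a fundamental domain, or Rieffel's equivalence bimodule), and it is exactly here that the Wexler--Raz biorthogonality of \cite{DLL, Janssen95, RS} enters in the classical proofs; (ii) the ``routine bookkeeping'' identifying $G$ with $L_a$ involves the $2$-cocycle and a reflection $\lambda^\circ\mapsto-\lambda^\circ$, so what one literally gets is the image of $a^*=a$ under the right regular (or conjugate-cocycle) representation --- harmless for the spectrum of a selfadjoint element, but worth saying, since otherwise the claimed equality of operators is off by a canonical anti-isomorphism. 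With those two points written out, this is a complete and correct proof.
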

There is an intricate link between Gabor analysis and Bargmann-Fock spaces: a Gaussian Gabor system $(g_\Omega,\Lambda)$ is a frame if and only if $\Lambda$ is a set of sampling for $\mathcal{F}^2_\Omega$, and $(g_\Omega,\Lambda^\circ)$ is a Riesz basis for its closed linear span if and only if $\Lambda^\circ$ is a set of interpolation for $\mathcal{F}^2_\Omega$, see \cite{GL} for the standard case $\Omega=iI$. 




Motivated by this we introduce the following well known notions: 

\begin{definition}\label{de:sam-int} Let $T: H_1 \to H_2$ be a bounded $\mathbb C$-linear map between two complex Hilbert spaces. Then
\begin{itemize}
\item[(1)] $T$ is called \emph{sampling} if there exist constants $A, B>0$ such that
\begin{equation}\label{eq:samp-c}
A\,||f||^2 \leq ||Tf||^2 \leq B\,||f||^2, \ \ \forall \  f\in H_1;
\end{equation}
\item[(2)] $T$ is referred to as \emph{interpolating} if $T$ is surjective and there exist constants $A, B>0$ such that
\begin{equation}\label{eq:inte-c}
A\,||f_c||^2 \leq ||c||^2 \leq B\, ||f_c||^2, \ \ \ \forall \ c\in H_2,
\end{equation}
where $f_c$ denotes the (unique) solution of $T(\cdot)=c$ with minimal norm.
\end{itemize}
The constants $A, B$ above are called the sampling (interpolating) bounds.
\end{definition}

\begin{proposition}\label{pr:dual1} $T$ is sampling with \eqref{eq:samp-c} if and only if $T^*$ is interpolating with \eqref{eq:inte-c}.
\end{proposition}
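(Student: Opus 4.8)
The plan is to unwind both sides into statements about $\ker$, $\operatorname{ran}$ and the action of $T$ on $(\ker T)^{\perp}$, and then to read one off from the other using only elementary Hilbert space facts; nothing deeper than the closed range theorem is needed.

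First I would record a reformulation of the interpolating condition. If $T^{*}$ is surjective, then for every $c\in H_{1}$ the solution set $\{f: T^{*}f=c\}$ is a non-empty closed coset of $\ker T^{*}$, so it has a unique minimal-norm element $f_{c}$, and $f_{c}\in(\ker T^{*})^{\perp}=\overline{\operatorname{ran}T}$; moreover $c\mapsto f_{c}$ is a bijection of $H_{1}$ onto $\overline{\operatorname{ran}T}$ with $\|c\|=\|T^{*}f_{c}\|$. Hence ``$T^{*}$ is interpolating with bounds $A,B$'' is equivalent to ``$T^{*}$ is surjective and $A\|g\|^{2}\le\|T^{*}g\|^{2}\le B\|g\|^{2}$ for all $g\in\overline{\operatorname{ran}T}$''. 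I also recall the standard duality facts $(\operatorname{ran}T)^{\perp}=\ker T^{*}$, $\ \operatorname{ran}T$ closed $\iff\operatorname{ran}T^{*}$ closed, and $T$ injective $\iff\operatorname{ran}T^{*}$ dense; together these give: $T$ is injective with closed range $\iff T^{*}$ is surjective. Finally, the sampling lower bound $A\|f\|^{2}\le\|Tf\|^{2}$ already forces $T$ to be injective with closed range, since Cauchy sequences in $\operatorname{ran}T$ pull back to Cauchy sequences in $H_{1}$.

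For the forward direction, assume $T$ is sampling with bounds $A,B$. Then $T$ is injective with closed range, so $T^{*}$ is surjective and $\overline{\operatorname{ran}T}=\operatorname{ran}T$. For $g=Tf\in\operatorname{ran}T$ the computation $|\langle T^{*}g,h\rangle|=|\langle Tf,Th\rangle|\le\sqrt B\,\|g\|\,\|h\|$ yields $\|T^{*}g\|\le\sqrt B\,\|g\|$, while $\|g\|^{2}=\langle T^{*}g,f\rangle\le\|T^{*}g\|\,\|f\|\le\|T^{*}g\|\,\|g\|/\sqrt A$ (using $\|f\|\le\|Tf\|/\sqrt A$) yields $\|T^{*}g\|\ge\sqrt A\,\|g\|$. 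By the reformulation above, $T^{*}$ is interpolating with the same bounds $A,B$.

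For the converse, assume $T^{*}$ is interpolating with bounds $A,B$; by the reformulation, $T^{*}$ is surjective and $A\|g\|^{2}\le\|T^{*}g\|^{2}\le B\|g\|^{2}$ on $R:=\overline{\operatorname{ran}T}$, and by the duality facts $T$ is injective and $\operatorname{ran}T=R$ is closed. Now regard $U:=T^{*}|_{R}\colon R\to H_{1}$: the displayed inequalities say exactly that $U$ is sampling with bounds $A,B$, so by the forward direction $U^{*}\colon H_{1}\to R$ is interpolating with bounds $A,B$. A one-line inner-product computation, using that $Th\in\operatorname{ran}T=R$ for all $h$, gives $U^{*}h=Th$, i.e.\ $U^{*}$ is $T$ viewed as a map onto $R$. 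Since $\ker T=0$, for each $c\in R$ the unique (hence minimal-norm) solution of $U^{*}f=c$ is the $f$ with $Tf=c$; letting $c=Tf$ range over $R$ turns the interpolating inequality for $U^{*}$ into $A\|f\|^{2}\le\|Tf\|^{2}\le B\|f\|^{2}$ for all $f\in H_{1}$, which is the sampling condition. I do not expect a genuine obstacle here; the only points requiring care are keeping the \emph{same} constants $A,B$ throughout (rather than merely some positive constants) and using closedness of $\operatorname{ran}T$ so that minimal-norm solutions actually exist, both of which are handled by the sampling lower bound.
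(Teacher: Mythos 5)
Your proof is correct. The forward direction is in substance the same computation as the paper's: the paper writes the minimal-norm solution of $T^*(\cdot)=f$ explicitly as $TSf$ with $S=(T^*T)^{-1}$ and then estimates $(Sf,f)$ and $\sup_{\|g\|=1}(TSf,Tg)$, whereas you identify the minimal-norm solutions abstractly as the elements of $(\ker T^*)^\perp=\overline{\operatorname{ran}T}$ and prove the equivalent two-sided bound $\sqrt{A}\,\|g\|\le\|T^*g\|\le\sqrt{B}\,\|g\|$ on $\operatorname{ran}T$ by the same two inner-product estimates. What you buy with this reformulation is a genuinely different (and cleaner) treatment of the converse: the paper only says it "may be deduced in a similar manner," while you reduce it to the already-proved forward direction by observing that $U:=T^*|_{\overline{\operatorname{ran}T}}$ is itself sampling with the same bounds and that $U^*=T$ (viewed as a map onto its closed range), so the interpolating inequality for $U^*$, combined with injectivity of $T$, is literally the sampling inequality for $T$. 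This bootstrap avoids redoing any computation and makes the preservation of the exact constants $A,B$ in both directions transparent; the price is the preliminary bookkeeping with the closed range theorem, all of which you carry out correctly (in particular, you rightly note that the lower sampling bound already forces $T$ to be injective with closed range, so that $\overline{\operatorname{ran}T}=\operatorname{ran}T$ and the minimal-norm solutions exist where you need them).
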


\begin{proof} Assume that $T$ is sampling with \eqref{eq:samp-c}. Then the eigenvalues of $T^*T$ lie in $[A, B]$, thus $T^*T$ has an inverse, say $S:=(T^*T)^{-1}$, which implies that 
$T^* T S f=f, \ \ \forall \ c\in H_1$. 
Thus $T^*: H_2\to H_1$ is surjective and the minimal solution of $T^*(\cdot)=f$ is $TSf$ (note that $TSf$ is minimal since $TSf \bot \ker \, T^*$). We need to show that
$$
A||TSf||^2 \leq ||f||^2 \leq B |||TSf||^2.
$$
In fact,  $S^{-1}\geq A\,I$ implies that
$$
||TSf||^2=(TSf, TSf)=(Sf, T^*TSf)=(Sf, f)\leq \frac1A ||f||^2,
$$
thus $A||TSf||^2 \leq ||f||^2$. Moreover, $f=T^*TSf$ implies
\begin{align*}
||f||=\sup_{||g||=1} (T^*TSf, g) & =\sup_{||g||=1} (TSf, Tg)  \\
& \leq ||TSf|| \cdot \sup_{||g||=1} ||Tg|| \leq \sqrt{B} ||TSf||,
\end{align*}
which gives $||f||^2 \leq B ||TSf||^2$. This establishes one direction and the other direction may be deduced in a similar manner.
\end{proof}

The following theorem follows directly from Theorem \ref{th:dual-franz} and Definition \ref{de:sam-int}.

\begin{theorem}\label{th:dual2} Let $\Lambda$ be a lattice in $\mathbb R^{n} \times \mathbb R^n$. Then $C_{g_\Omega}^\Lambda$ is sampling with 
$$
A\, I \leq S_{g_\Omega}^\Lambda\leq B\,I
$$ 
if and only if $D_{g_\Omega}^{\Lambda^\circ}$ is sampling with 
$$
|\Lambda|\cdot A \leq (S_{g_\Omega}^{\Lambda^\circ})^* \leq |\Lambda|\cdot B.
$$
\end{theorem}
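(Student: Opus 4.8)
The plan is to translate both sides of the asserted equivalence into the familiar language of Gabor frames and Riesz sequences and then invoke the Duality Theorem (Theorem~\ref{th:dual-franz}). First I would record the elementary operator identities that turn the two operator inequalities in the statement into spectral reformulations of norm inequalities. Since $S^\Lambda_{g_\Omega}=(C^\Lambda_{g_\Omega})^\ast C^\Lambda_{g_\Omega}$, for every $f\in L^2(\mathbb R^n)$ we have $\|C^\Lambda_{g_\Omega}f\|^2_{\ell^2(\Lambda)}=(S^\Lambda_{g_\Omega}f,f)=\sum_{\lambda\in\Lambda}|(f,\pi_\lambda g_\Omega)|^2$; hence the condition ``$C^\Lambda_{g_\Omega}$ is sampling with $A\,I\le S^\Lambda_{g_\Omega}\le B\,I$'' (Definition~\ref{de:sam-int}(1)) is literally the statement that $(g_\Omega,\Lambda)$ is a Gabor frame with frame bounds $A$ and $B$, the upper inequality being automatic by Lemma~\ref{le:bdd} and the lower one being the frame inequality.

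Next I would do the same bookkeeping on the dual lattice. For $c\in\ell^2(\Lambda^\circ)$ one has $D^{\Lambda^\circ}_{g_\Omega}c=\sum_{\lambda^\circ\in\Lambda^\circ}c_{\lambda^\circ}\pi_{\lambda^\circ}g_\Omega$, and, using $(C^{\Lambda^\circ}_{g_\Omega})^\ast=D^{\Lambda^\circ}_{g_\Omega}$, the operator governing the sampling bounds of $D^{\Lambda^\circ}_{g_\Omega}$ is $(D^{\Lambda^\circ}_{g_\Omega})^\ast D^{\Lambda^\circ}_{g_\Omega}=C^{\Lambda^\circ}_{g_\Omega}D^{\Lambda^\circ}_{g_\Omega}$ on $\ell^2(\Lambda^\circ)$, i.e. the composition $S^{\Lambda^\circ}_{g_\Omega}=D^{\Lambda^\circ}_{g_\Omega}C^{\Lambda^\circ}_{g_\Omega}$ with its two factors interchanged, which is what $(S^{\Lambda^\circ}_{g_\Omega})^\ast$ denotes in the statement. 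Thus ``$D^{\Lambda^\circ}_{g_\Omega}$ is sampling with $|\Lambda|A\le(S^{\Lambda^\circ}_{g_\Omega})^\ast\le|\Lambda|B$'' unwinds to $|\Lambda|A\,\|c\|^2\le\|\sum_{\lambda^\circ\in\Lambda^\circ}c_{\lambda^\circ}\pi_{\lambda^\circ}g_\Omega\|^2\le|\Lambda|B\,\|c\|^2$ for all $c\in\ell^2(\Lambda^\circ)$, which is exactly the assertion that $\{\pi_{\lambda^\circ}g_\Omega\}_{\lambda^\circ\in\Lambda^\circ}$ is a Riesz sequence with Riesz bounds $|\Lambda|A$ and $|\Lambda|B$.

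With both sides rewritten in this way, the theorem is precisely the equivalence in Theorem~\ref{th:dual-franz} applied with frame bounds $A$ and $B$; alternatively one can route through Proposition~\ref{pr:dual1} (``$C^\Lambda_{g_\Omega}$ sampling'' $\iff$ ``$D^\Lambda_{g_\Omega}=(C^\Lambda_{g_\Omega})^\ast$ interpolating'') and then use Theorem~\ref{th:dual-franz} to pass from interpolation of the synthesis operator at $\Lambda$ to sampling of the synthesis operator at $\Lambda^\circ$. There is essentially no obstacle here beyond careful bookkeeping: one must keep track of which Hilbert space each operator acts on — in particular not confusing $C^{\Lambda^\circ}_{g_\Omega}D^{\Lambda^\circ}_{g_\Omega}$ on $\ell^2(\Lambda^\circ)$ with $S^{\Lambda^\circ}_{g_\Omega}=D^{\Lambda^\circ}_{g_\Omega}C^{\Lambda^\circ}_{g_\Omega}$ on $L^2(\mathbb R^n)$, the two differing by the kernel of $D^{\Lambda^\circ}_{g_\Omega}$ when the dual system is incomplete — and one must check that the factor $|\Lambda|$ is exactly the normalization supplied by Theorem~\ref{th:dual-franz}. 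No convergence subtleties intervene, since $C^\Lambda_{g_\Omega}$ and $C^{\Lambda^\circ}_{g_\Omega}$ (equivalently $D^\Lambda_{g_\Omega}$, $D^{\Lambda^\circ}_{g_\Omega}$) are bounded for every lattice by Lemma~\ref{le:bdd}, so all the adjoint and composition identities used above are legitimate.
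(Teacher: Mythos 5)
Your proposal is correct and matches the paper's (implicit) argument: the paper states that Theorem \ref{th:dual2} "follows directly from Theorem \ref{th:dual-franz} and Definition \ref{de:sam-int}", and your unwinding of the two sampling conditions into the frame inequality for $(g_\Omega,\Lambda)$ and the Riesz-sequence inequality for $\{\pi_{\lambda^\circ}g_\Omega\}_{\lambda^\circ\in\Lambda^\circ}$, followed by an application of the Duality Theorem, is exactly that. Your reading of $(S_{g_\Omega}^{\Lambda^\circ})^\ast$ as $C^{\Lambda^\circ}_{g_\Omega}D^{\Lambda^\circ}_{g_\Omega}$ on $\ell^2(\Lambda^\circ)$ is the intended one, and the bookkeeping is sound.
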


Notice that $C_{g_\Omega}^\Lambda$ is sampling if and only $(g_\Omega,\Lambda)$ defines a frame in $L^2(\mathbb R^n)$. The density theorem for Gabor frames states that if $C_{g_\Omega}^\Lambda$ is sampling, then $|\Lambda| \leq 1$. Furthermore, a \emph{Balian-Low type theorem} (see \cite[Theorem 1.5]{AFK} or \cite{GHO} for related results associated to general Fock spaces) further gives: 

\begin{theorem}\label{th:sam1} Given a lattice $\Lambda$ in $\mathbb R^{n} \times \mathbb R^n$. If $C_{g_\Omega}^\Lambda$ is sampling then $|\Lambda|<1$.
\end{theorem}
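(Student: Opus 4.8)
The plan is to reduce the statement, via the duality already set up, to the corresponding \emph{strict} density theorem for interpolation in the Bargmann--Fock space, and then to recall why that strictness holds. First, by Corollary \ref{co:ff} we may assume $\Omega=iI$: the linear map $f_\Omega$ is symplectic, hence preserves the covolume, so $C^\Lambda_{g_\Omega}$ is sampling if and only if $C^{\Gamma_{\Omega,\Lambda}}_{g_{iI}}$ is, and $|\Gamma_{\Omega,\Lambda}|=|\Lambda|$. So suppose $C^\Lambda_{g_{iI}}$ is sampling, i.e. $(g_{iI},\Lambda)$ is a frame. The density theorem for Gabor frames recalled above already gives $|\Lambda|\le 1$, so it only remains to exclude the critical case $|\Lambda|=1$.

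Assume $|\Lambda|=1$ for contradiction. By Proposition \ref{pr:1.1}, the lattice $\Gamma:=\Gamma_{iI,\Lambda^\circ}=\{\eta+iy:(\eta,y)\in\Lambda^\circ\}$ is a set of interpolation for $\mathcal F^2$; since $f_{iI}=\mathrm{id}$ here and the symplectic dual has covolume $|\Lambda^\circ|=|\Lambda|^{-1}$, we get $|\Gamma|=1$. Thus it suffices to prove: \emph{a lattice $\Gamma\subset\mathbb C^n$ of covolume $1$ is never a set of interpolation for $\mathcal F^2$.} For $n=1$ this is the ``only if'' direction of Theorem \ref{th:LSW} of Lyubarskii and Seip--Wallst\'en \cite{Lyu92,Seip92,SW92}; for general $n$ it is the Balian--Low type theorem for Gaussian Gabor systems at critical density, \cite[Theorem 1.5]{AFK} (see also \cite{GHO}). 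Combined with the reduction above this forces $|\Lambda|<1$.

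I would also include the short complex-analytic argument for $n=1$, which makes the strictness transparent. Normalize so that $0\in\Gamma$, and let $S_\Gamma$ be a Gaussian-twisted Weierstrass $\sigma$-function of $\Gamma$: an entire function vanishing exactly, and to first order, on $\Gamma$, with $|S_\Gamma(z)|\,e^{-\pi|z|^2/2}\asymp\mathrm{dist}(z,\Gamma)$ up to lower-order factors, whose existence uses precisely $|\Gamma|=1$ (Lyubarskii \cite{Lyu92}, Seip--Wallst\'en \cite{SW92}). If $\Gamma$ were a set of interpolation, choose $F\in\mathcal F^2$ with $F(0)=1$ and $F|_{\Gamma\setminus\{0\}}=0$. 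Then $h(z):=zF(z)/S_\Gamma(z)$ is entire, $h(0)=1$, $h|_{\Gamma\setminus\{0\}}=0$; using the pointwise bound $|F(z)|\le\|F\|\,e^{\pi|z|^2/2}$, the lower bound on $|S_\Gamma|$ off a fixed neighbourhood of $\Gamma$, and the maximum principle on that neighbourhood, one sees $|h(z)|\lesssim 1+|z|$, so $h$ is affine, $h(z)=az+1$, which cannot vanish at the infinitely many points of $\Gamma\setminus\{0\}$ --- a contradiction.

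The main obstacle is the strictness in dimension $n>1$: the $\sigma$-function argument does not transplant, since $\mathbb C^n/\Gamma$ carries no canonical theta function unless it is an abelian variety with a compatible polarization, so one must invoke the Balian--Low type results of \cite{AFK,GHO} (equivalently, a more robust argument, e.g. comparing trace densities of the frame operator over large balls with the ``full'' lattice). The weak inequality $|\Lambda|\le 1$ itself also admits the complex-analytic proof of Lindholm \cite{Lindholm}, which fits naturally into the present framework through Proposition \ref{pr:1.1}.
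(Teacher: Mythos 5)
Your backbone coincides with the paper's: Theorem \ref{th:sam1} is not proved in the text at all, it is simply quoted as the Balian--Low type theorem of \cite[Theorem 1.5]{AFK} (see also \cite{GHO}), and your reduction through Corollary \ref{co:ff} and Proposition \ref{pr:1.1} to the equivalent statement that no lattice of covolume $1$ interpolates $\mathcal F^2$ is sound but not needed if one cites \cite{AFK} for the sampling statement directly. (There is no circularity in your use of Proposition \ref{pr:1.1}: it rests on the first part of Corollary \ref{co:dual}, hence only on Theorem \ref{th:dual2} and Proposition \ref{pr:dual1}, not on Theorem \ref{th:sam1}.)

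The supplementary one-dimensional argument, however, breaks at its last step. With $F(0)=1$ and $F=0$ on $\Gamma\setminus\{0\}$, the function $h(z)=zF(z)/S_\Gamma(z)$ is indeed entire of at most linear growth, but it does \emph{not} vanish on $\Gamma\setminus\{0\}$: at each $\lambda\neq 0$ both $F$ and $S_\Gamma$ have simple zeros, so $h(\lambda)=\lambda F'(\lambda)/S_\Gamma'(\lambda)$, which is in general nonzero. So the conclusion ``$h$ is affine yet vanishes on an infinite set'' is not available. The actual contradiction in \cite{Lyu92,SW92} is an integrability one: write $F(z)=h(z)S_\Gamma(z)/z$ with $h(z)=az+b$ and $b=1/S_\Gamma'(0)\neq 0$, and use the two-sided bound $|S_\Gamma(z)|^2e^{-\pi|z|^2}\asymp \mathrm{dist}(z,\Gamma)^2$, which does not decay at infinity; then
\[
\int_{\mathbb C}|F|^2e^{-\pi|z|^2}\ \gtrsim\ \int_{|z|>1}\frac{|az+b|^2}{|z|^2}\,\mathrm{dist}(z,\Gamma)^2
\]
diverges (quadratically if $a\neq 0$, logarithmically if $a=0$, since the cell averages of $\mathrm{dist}(\cdot,\Gamma)^2$ are bounded below), contradicting $F\in\mathcal F^2$. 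If you include the $n=1$ case, replace your final step by this divergence argument; for $n>1$ you are right that no such canonical $\sigma$-type function exists and the appeal to \cite{AFK,GHO} is the intended route.
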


The above two theorems and Proposition \ref{pr:dual1} imply

\begin{corollary}\label{co:dual}  Given a lattice $\Lambda$ in $\mathbb R^{n} \times \mathbb R^n$.  Then $C_{g_\Omega}^\Lambda$ is sampling if and only if $C_{g_\Omega}^{\Lambda^\circ}$ is interpolation. Moreover, the interpolation bounds are a scalar multiple of  the sampling bounds. In particular, $C_{g_\Omega}^\Lambda$ can not be both sampling and interpolation. 
\end{corollary}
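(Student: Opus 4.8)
The plan is to obtain Corollary \ref{co:dual} as a formal consequence of Theorem \ref{th:dual2}, Proposition \ref{pr:dual1} and Theorem \ref{th:sam1}, using in addition the two standard properties of the symplectic dual lattice that are already implicit in the Duality Theorem \ref{th:dual-franz}: the dualization is an involution, $(\Lambda^\circ)^\circ=\Lambda$, and covolumes transform reciprocally, $|\Lambda^\circ|=|\Lambda|^{-1}$.

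First I would prove the equivalence ``$C_{g_\Omega}^\Lambda$ is sampling $\Longleftrightarrow$ $C_{g_\Omega}^{\Lambda^\circ}$ is interpolation''. Suppose $C_{g_\Omega}^\Lambda$ is sampling with $A\,I\le S_{g_\Omega}^\Lambda\le B\,I$. By Theorem \ref{th:dual2} this is equivalent to $D_{g_\Omega}^{\Lambda^\circ}$ being sampling with sampling bounds $|\Lambda|A$ and $|\Lambda|B$. Now $D_{g_\Omega}^{\Lambda^\circ}=(C_{g_\Omega}^{\Lambda^\circ})^*$, and $C_{g_\Omega}^{\Lambda^\circ}$ is bounded by Lemma \ref{le:bdd}, so $(D_{g_\Omega}^{\Lambda^\circ})^*=C_{g_\Omega}^{\Lambda^\circ}$. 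Applying Proposition \ref{pr:dual1} with $T:=D_{g_\Omega}^{\Lambda^\circ}$ (so that $T^*=C_{g_\Omega}^{\Lambda^\circ}$), the sampling of $T$ is equivalent to the interpolation of $T^*=C_{g_\Omega}^{\Lambda^\circ}$, and the proof of Proposition \ref{pr:dual1} shows the interpolation bounds in \eqref{eq:inte-c} may be taken equal to the sampling bounds of $D_{g_\Omega}^{\Lambda^\circ}$, i.e. to $|\Lambda|A$ and $|\Lambda|B$. Since every step in this chain is an equivalence, the claimed equivalence follows, and along the way we see that the interpolation bounds of $C_{g_\Omega}^{\Lambda^\circ}$ are $|\Lambda|$ times the sampling bounds of $C_{g_\Omega}^\Lambda$ --- this is the ``scalar multiple'' assertion.

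For the final statement, assume toward a contradiction that $C_{g_\Omega}^\Lambda$ is simultaneously sampling and interpolation. Being sampling, Theorem \ref{th:sam1} forces $|\Lambda|<1$. Being interpolation, we apply the equivalence just established with $\Lambda$ replaced by $\Lambda^\circ$ and use $(\Lambda^\circ)^\circ=\Lambda$: thus $C_{g_\Omega}^\Lambda$ interpolation is equivalent to $C_{g_\Omega}^{\Lambda^\circ}$ sampling, whence Theorem \ref{th:sam1} gives $|\Lambda^\circ|<1$, i.e. $|\Lambda|=|\Lambda^\circ|^{-1}>1$. This contradicts $|\Lambda|<1$.

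There is no real obstacle here; the statement is a bookkeeping consequence of results already in hand. The only two points that deserve a moment's care are (i) to match adjoints correctly when invoking Proposition \ref{pr:dual1}, so that it is $D_{g_\Omega}^{\Lambda^\circ}$ (and not $C_{g_\Omega}^{\Lambda^\circ}$) that plays the role of $T$, and (ii) to record explicitly the involution and reciprocal-covolume properties of $\Lambda^\circ$, which are used in the last paragraph and are consistent with the $|\Lambda|$-scaling appearing in Theorem \ref{th:dual-franz}.
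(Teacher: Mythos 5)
Your proposal is correct and follows essentially the same route as the paper: the equivalence is obtained by chaining Theorem \ref{th:dual2} with Proposition \ref{pr:dual1} applied to $T=D_{g_\Omega}^{\Lambda^\circ}$ (so $T^*=C_{g_\Omega}^{\Lambda^\circ}$), and the impossibility of being simultaneously sampling and interpolating comes from Theorem \ref{th:sam1} giving $|\Lambda|<1$ and $|\Lambda^\circ|<1$ against $|\Lambda|\cdot|\Lambda^\circ|=1$. You merely spell out the adjoint bookkeeping and the involution $(\Lambda^\circ)^\circ=\Lambda$ that the paper leaves implicit.
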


\begin{proof} The first part follows directly from Theorem \ref{th:dual2} and Proposition \ref{pr:dual1}. For the second part, notice that if $C_{g_\Omega}^\Lambda$ is both sampling and interpolation, we must have
$$
|\Lambda|<1, \ \ |\Lambda^\circ|<1,
$$
which is a contradiction since $|\Lambda| \cdot |\Lambda^\circ|=1$.
\end{proof}

\noindent
\textbf{Remark.}  \emph{In case $n=1$ and 
$$
g(t)=\overline{e^{\pi i a t^2}}, 
$$
for some complex number $a$ with ${\rm Im} \,a>0$, we known that $C_{g}^\Lambda$ is sampling \emph{if and only if} $|\Lambda| <1$ (see \cite{Lyu92, Seip92, SW92}). For general $n$, $g_\Omega(t):= \overline{e^{\pi i t^T\Omega t}}$, Theorem \ref{th:uniformity} implies that there exists a lattice $\Lambda$ in $\mathbb R^{n} \times \mathbb R^n$ such that $C_{g_{\Omega}}^\Lambda$ is sampling (resp. interpolation) for some $\Omega\in \mathfrak{H}$ but not  for all $\Omega\in \mathfrak{H}$. On the other hand, if $C_{g_{\Omega_0}}^\Lambda$ is sampling for some $\Omega_0\in \mathfrak{H}$ then by Theorem 1.3 in \cite{AFK}, we know that $C_{g_\Omega}^\Lambda$ is sampling if $\Omega$ is very close to $\Omega_0$.}

\subsection{Proof of Proposition \ref{pr:1.1}} 

\begin{proof}[Proof of Proposition \ref{pr:1.1}] By our definition, $(g_\Omega,\Lambda)$ defines a frame in $L^2(\mathbb R^n)$ if and only if $C_{g_\Omega}^\Lambda$ is sampling, which is equivalent to that $C_{g_\Omega}^{\Lambda^\circ}$ is interpolation (see Corollary \ref{co:dual}). Using the $\Omega$ Bargmann transform,  we know that $C_{g_\Omega}^{\Lambda^\circ}$ is interpolation if and only if $C_{g_\Omega}^{\Lambda^\circ}$ is bounded  and 
$$
\Gamma:= \{\xi+\Omega x\in \mathbb C^n: (\xi, x)\in \Lambda^\circ\}
$$
is a set of interpolation for $\mathcal F^2_\Omega$. By Lemma \ref{le:bdd}, we know that $C_{g_\Omega}^{\Lambda^\circ}$ is always bounded, hence $(g_\Omega,\Lambda)$ defines a frame in $L^2(\mathbb R^n)$ if and only if $\Gamma$ is a set of interpolation for $\mathcal F^2_\Omega$.  Notice that
$$
|F(z)|^2 e^{-2\pi \phi_\Omega(z)}=|F(({\rm Im}\, \Omega)^{1/2}w) e^{\frac\pi 2 w^T w}|^2 e^{-\pi |w|^2}, \ \  w:=({\rm Im}\, \Omega)^{-1/2}z 
$$
implies that
\begin{equation}\label{eq:iso}
F(z) \mapsto F(({\rm Im}\, \Omega)^{1/2}w) e^{\frac\pi 2 w^T w}
\end{equation}
defines an isomorphism from  $\mathcal F^2_\Omega$ to $\mathcal F^2$. Thus $(g_\Omega,\Lambda)$ defines a frame in $L^2(\mathbb R^n)$ if and only if 
$$
({\rm Im}\, \Omega)^{-1/2}\Gamma = \Gamma_{\Omega, \Lambda^\circ}
$$ 
is a set of interpolation for $\mathcal F^2$.
\end{proof}

 The duality principle Theorem \ref{th:dual2} further implies:

\begin{theorem}\label{th:dual3}  With the notation in the above proof, the following statements are equivalent:

\begin{itemize}
\item[(1)] $\Gamma_{\Omega, \Lambda^\circ}
$ is a set of interpolation for $\mathcal F^2$ and for all $F\in \mathcal F^2$ with $$\sum_{\gamma\in \Gamma_{\Omega, \Lambda^\circ}} |F(\gamma)|^2e^{-\pi|\gamma|^2} =1,$$ we have
$$
A\leq \inf_{F' \in \mathcal F^2,\, F'=F \, \text{on} \,\Gamma_{\Omega, \Lambda^\circ} } ||F'||^2 \leq B;
$$ 
\item[(2)] $\Gamma
$ is a set of interpolation for $\mathcal F_\Omega^2$ and for all $F\in \mathcal F_\Omega^2$ with 
$$\sum_{\gamma\in \Gamma} |F(\gamma)|^2e^{-2\pi \phi_\Omega(\gamma)} =1,$$
we have 
$$
A \cdot \det({\rm Im}\,\Omega)\leq \inf_{F' \in \mathcal F_\Omega^2,\, F'=F \, \text{on} \,\Gamma} ||F'||^2 \leq B \cdot \det({\rm Im}\,\Omega);
$$ 
\item[(3)]  $(\Lambda, g_\Omega)$ defines a frame in $L^2(\mathbb R^n)$  and for  all $f\in L^2(\mathbb R^n)$, $||f||=1$, 
$$
\frac{(B\cdot |\Lambda|)^{-1}}{\sqrt{2^n\det ({\rm Im}\,\Omega)}}   \leq \sum_{\lambda\in\Lambda} |(f, \pi_\lambda g_\Omega)|^2 \leq \frac{(A\cdot |\Lambda|)^{-1}}{\sqrt{2^n\det ({\rm Im}\,\Omega)}}  .
$$
\end{itemize}
\end{theorem}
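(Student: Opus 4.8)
The plan is to establish the two equivalences $(1)\Leftrightarrow(2)$ and $(2)\Leftrightarrow(3)$ separately; no new analytic ingredient is needed, only a careful transport of norms along the isometries (up to explicit constants) already set up above, together with the duality Theorem~\ref{th:dual2} and Proposition~\ref{pr:dual1}. The qualitative equivalences (``set of interpolation'' for the two spaces, and the frame property) are already contained in the proof of Proposition~\ref{pr:1.1} and in Corollary~\ref{co:dual}, so the real content is the passage between the three sets of constants.

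\emph{Step 1: $(1)\Leftrightarrow(2)$.} I would use the isomorphism $\Phi\colon\mathcal F^2_\Omega\to\mathcal F^2$ of \eqref{eq:iso}, $\Phi F(w):=F(({\rm Im}\,\Omega)^{1/2}w)\,e^{\frac{\pi}{2}w^Tw}$. The pointwise identity recorded in the proof of Proposition~\ref{pr:1.1}, namely $|F(z)|^2e^{-2\pi\phi_\Omega(z)}=|\Phi F(w)|^2e^{-\pi|w|^2}$ for $w=({\rm Im}\,\Omega)^{-1/2}z$, combined with the change of variables $z=({\rm Im}\,\Omega)^{1/2}w$ (whose Jacobian as a real map on $\mathbb C^n\cong\mathbb R^{2n}$ equals $\det({\rm Im}\,\Omega)$), gives $\|F\|^2_\Omega=\det({\rm Im}\,\Omega)\,\|\Phi F\|^2$. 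The same pointwise identity shows that $\Phi$ carries the lattice $\Gamma$ bijectively onto $\Gamma_{\Omega,\Lambda^\circ}=({\rm Im}\,\Omega)^{-1/2}\Gamma$ and matches the weighted point masses $|F(\gamma)|^2e^{-2\pi\phi_\Omega(\gamma)}$ with $|\Phi F(w)|^2e^{-\pi|w|^2}$ with no extra constant. Since $\Phi$ is a linear bijection respecting restriction to these lattices, interpolation data, interpolants, and minimal-norm interpolants correspond; taking infima and using the norm identity converts the normalization in (1) into that in (2), and conversely.

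\emph{Step 2: $(2)\Leftrightarrow(3)$.} Here I would go through the duality. As in the proof of Proposition~\ref{pr:1.1}, $|(f,\pi_\lambda g_\Omega)|^2=|\mathcal B_\Omega f(z)|^2e^{-2\pi\phi_\Omega(z)}$ for $\lambda=(\xi,x)$ and $z=\xi+\Omega x$, so $C^{\Lambda^\circ}_{g_\Omega}$ factors, up to a unitary relabelling $U$ onto $\ell^2(\Lambda^\circ)$, as $C^{\Lambda^\circ}_{g_\Omega}=U\circ R\circ\mathcal B_\Omega$, where $RF:=(F(\gamma))_{\gamma\in\Gamma}$ maps $\mathcal F^2_\Omega$ into $\ell^2(\Gamma)$ equipped with the weight $e^{-2\pi\phi_\Omega(\gamma)}$, and $\mathcal B_\Omega\colon L^2(\mathbb R^n)\to\mathcal F^2_\Omega$ is onto and a scalar multiple of a unitary with $\|\mathcal B_\Omega f\|^2_\Omega=\sqrt{\det({\rm Im}\,\Omega)/2^n}\,\|f\|^2$ by \eqref{eq:iso1new}. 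Now: $(g_\Omega,\Lambda)$ is a frame with $A'I\le S^\Lambda_{g_\Omega}\le B'I$ $\iff$ (Theorem~\ref{th:dual2}) $D^{\Lambda^\circ}_{g_\Omega}$ is sampling with $|\Lambda|A'\le(S^{\Lambda^\circ}_{g_\Omega})^*\le|\Lambda|B'$ $\iff$ (Proposition~\ref{pr:dual1}) $C^{\Lambda^\circ}_{g_\Omega}=(D^{\Lambda^\circ}_{g_\Omega})^*$ is interpolating in the sense of Definition~\ref{de:sam-int} with those bounds, i.e.\ the minimal-norm solution $f_c$ of $C^{\Lambda^\circ}_{g_\Omega}(\cdot)=c$ satisfies $\tfrac{1}{|\Lambda|B'}\le\|f_c\|^2\le\tfrac{1}{|\Lambda|A'}$ whenever $\|c\|=1$. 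Transporting the last statement along the factorization — minimal solutions correspond because $\mathcal B_\Omega$ is onto and $U$ is unitary, and norms scale by $\sqrt{\det({\rm Im}\,\Omega)/2^n}$ — yields that $\Gamma$ is a set of interpolation for $\mathcal F^2_\Omega$ with $\tfrac{\sqrt{\det({\rm Im}\,\Omega)/2^n}}{|\Lambda|B'}\le\inf_{F'|_\Gamma=F|_\Gamma}\|F'\|^2_\Omega\le\tfrac{\sqrt{\det({\rm Im}\,\Omega)/2^n}}{|\Lambda|A'}$ for unit data. Choosing $A'=(B|\Lambda|\sqrt{2^n\det({\rm Im}\,\Omega)})^{-1}$ and $B'=(A|\Lambda|\sqrt{2^n\det({\rm Im}\,\Omega)})^{-1}$ — exactly the lower and upper frame bounds displayed in (3) — these two quantities simplify to $A\det({\rm Im}\,\Omega)$ and $B\det({\rm Im}\,\Omega)$, which is (2). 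The lower bound in any ``set of interpolation'' assertion is in any case automatic from the boundedness of $C^{\Lambda^\circ}_{g_\Omega}$ in Lemma~\ref{le:bdd}.

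The only genuine work — and the main thing to get right — is the bookkeeping of the three sources of constants: the change of weight $e^{-2\pi\phi_\Omega}\leftrightarrow e^{-\pi|w|^2}$, the covolume factor $|\Lambda|$ coming from Theorem~\ref{th:dual2}, and the normalization $\sqrt{\det({\rm Im}\,\Omega)/2^n}$ of the $\Omega$-Bargmann transform in \eqref{eq:iso1new}. One must also be attentive to the order reversal of $A$ and $B$ between the interpolation picture and the frame picture: a larger minimal-interpolant norm corresponds to a smaller lower frame bound, which is precisely why $B$ controls the lower bound and $A$ the upper bound in (3). Everything else is formal once Steps 1 and 2 are in place.
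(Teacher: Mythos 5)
Your proof is correct and follows essentially the same route as the paper's: $(1)\Leftrightarrow(2)$ via the isomorphism \eqref{eq:iso}, and $(2)\Leftrightarrow(3)$ by combining \eqref{eq:iso1new} with Proposition \ref{pr:dual1} and Theorem \ref{th:dual2}. Your bookkeeping of the three constants (the Jacobian $\det({\rm Im}\,\Omega)$, the covolume $|\Lambda|$, and the normalization $\sqrt{\det({\rm Im}\,\Omega)/2^n}$ of $\mathcal B_\Omega$), including the order reversal of $A$ and $B$, matches the statement exactly.
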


\begin{proof} \eqref{eq:iso} implies $(1)\Leftrightarrow (2)$.  By \eqref{eq:iso1new}, we know that $(2)$ is equivalent to that $C_{g_\Omega}^{\Lambda^\circ}$ is interpolation (see \eqref{eq:inte-c}) with 
$$
\frac{B^{-1}\cdot ||f_c||^2}{\sqrt{2^n\det ({\rm Im}\,\Omega)}} \leq ||c||^2 \leq \frac{A^{-1}\cdot ||f_c||^2}{\sqrt{2^n\det ({\rm Im}\,\Omega)}}, \ \ \ \forall \ c\in l^2.
$$
By Proposition \ref{pr:dual1}, the above inequality is equivalent to that
$$
\frac{B^{-1}}{\sqrt{2^n\det ({\rm Im}\,\Omega)}}\,I \leq (S_{g_\Omega}^{\Lambda^\circ})^* \leq \frac{A^{-1}}{\sqrt{2^n\det ({\rm Im}\,\Omega)}}\,I.
$$
Thus Theorem \ref{th:dual2} gives $(2)\Leftrightarrow (3)$. 
\end{proof}

\subsection{Proof of the H\"ormander criterion (Theorem \ref{th:hor})}

\subsubsection{$L^2$-estimate for the $\dbar$-equation} We shall use the following special case of  H\"ormander's theorem (see \cite[page 378, Theorem 6.5]{Demailly12}, see also Chapter 4 in \cite{H65}):

\begin{theorem}\label{th:hor1} Fix a smooth $(0,1)$-form $v$ with $\dbar v=0$ on $\mathbb C^n$. Let $\phi$ be a plurisubharmonic function such that $\phi-\delta |z|^2$ is also plurisubhamonic on $\mathbb C^n$ for some positive constant $\delta$.  Then there is a smooth function $a$ on $\mathbb C^n$ such that $\dbar u=v$ and
$$
\int_{\mathbb C^n} |u|^2 e^{-\phi} \leq  \frac1\delta\int_{\mathbb C^n} |v|^2 \, e^{-\phi},
$$
where $|v|^2:=\sum |v_{\bar j}|^2$ for $v=\sum v_{\bar j} d\bar z_j$.
\end{theorem}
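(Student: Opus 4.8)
This is the classical H\"ormander $L^2$-existence theorem for $\dbar$ on $\mathbb C^n$ (flat background metric, trivial bundle with weight $e^{-\phi}$), so the plan is to run the standard H\"ormander--Andreotti--Vesentini argument; in practice one would simply cite Demailly's book, but let me record the structure. Let $T\colon L^2(\mathbb C^n,e^{-\phi})\to L^2_{(0,1)}(\mathbb C^n,e^{-\phi})$ and $S\colon L^2_{(0,1)}(\mathbb C^n,e^{-\phi})\to L^2_{(0,2)}(\mathbb C^n,e^{-\phi})$ be the maximal closed densely defined realizations of $\dbar$, so that $S\circ T=0$ and $v\in\ker S$. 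The whole statement reduces, by Hilbert-space duality, to the a priori estimate
\[
\|w\|_\phi^2\ \le\ \tfrac1\delta\bigl(\|T^*w\|_\phi^2+\|Sw\|_\phi^2\bigr)\qquad\text{for all } w\in\mathrm{Dom}(T^*)\cap\mathrm{Dom}(S).
\]
Indeed, writing $w=w_1+w_2$ with $w_1\in\ker S$ and $w_2\perp\ker S$, one has $w_2\in\ker T^*$ and $(v,w_2)_\phi=0$, so $|(v,w)_\phi|=|(v,w_1)_\phi|\le\|v\|_\phi\|w_1\|_\phi\le\tfrac1{\sqrt\delta}\|v\|_\phi\|T^*w\|_\phi$; hence $T^*w\mapsto(v,w)_\phi$ is a well-defined bounded functional on $\mathrm{Im}(T^*)$, and Riesz representation produces $u$ with $\dbar u=v$ and $\int_{\mathbb C^n}|u|^2e^{-\phi}\le\tfrac1\delta\int_{\mathbb C^n}|v|^2e^{-\phi}$.

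The analytic heart is the a priori estimate. I would prove it first for $w=\sum_j w_j\,d\bar z_j$ smooth with compact support via the Bochner--Kodaira--H\"ormander identity: computing $T^*w=-\sum_j e^{\phi}\partial_{z_j}(e^{-\phi}w_j)$ explicitly and integrating by parts twice, the mixed second-order terms recombine (the commutator of $\partial_{z_j}$ and $\partial_{\bar z_k}$ surviving only through the weight) to give
\[
\|\dbar w\|_\phi^2+\|\dbar^*_\phi w\|_\phi^2=\sum_{j,k}\int_{\mathbb C^n}\Bigl|\frac{\partial w_k}{\partial\bar z_j}\Bigr|^2e^{-\phi}+\sum_{j,k}\int_{\mathbb C^n}\frac{\partial^2\phi}{\partial z_j\,\partial\bar z_k}\,w_j\bar w_k\,e^{-\phi}\ \ge\ \delta\int_{\mathbb C^n}|w|^2e^{-\phi},
\]
the last inequality being exactly the hypothesis that $\phi-\delta|z|^2$ is plurisubharmonic, i.e.\ $\bigl(\partial^2\phi/\partial z_j\partial\bar z_k\bigr)\ge\delta I$. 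To pass from compactly supported $w$ to the full domain $\mathrm{Dom}(T^*)\cap\mathrm{Dom}(S)$ I would use completeness of the flat metric on $\mathbb C^n$: multiplying by cutoffs $\chi(z/R)$ with $|\nabla\chi|$ bounded, the commutator errors are $O(R^{-1})\|w\|_\phi\to0$, and a standard approximation shows smooth compactly supported $(0,1)$-forms are dense in the graph norm of $S\oplus T^*$. This density/cutoff step, executed for a weight $\phi$ with no assumed growth bound, is the point I expect to require the most care.

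Finally, the $u$ obtained above is an $L^2$ distributional solution of $\dbar u=v$; since $\dbar$ is locally solvable (Dolbeault--Grothendieck) and the difference of two local solutions is holomorphic, $u$ is locally a smooth solution plus a holomorphic function, hence smooth wherever $v$ is, in particular everywhere. This gives the asserted smooth $u$ with constant $1/\delta$.
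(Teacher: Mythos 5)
Your outline is the standard H\"ormander--Andreotti--Vesentini proof (duality reduction to the a priori estimate, Morrey--Kohn--H\"ormander identity, completeness/cutoff density lemma, interior regularity), and it is essentially what the paper relies on: the authors do not prove Theorem \ref{th:hor1} but cite Demailly's book and H\"ormander. The functional-analytic reduction, the identity for compactly supported forms, and the final smoothness argument via local solvability plus holomorphicity of the difference are all correct as written.

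There is, however, one genuine gap: the theorem only assumes $\phi$ plurisubharmonic, whereas your Bochner--Kodaira identity uses the pointwise complex Hessian $\partial^2\phi/\partial z_j\partial\bar z_k$, and the Friedrichs-type mollification needed for the density lemma also requires the first-order coefficients $\partial\phi$ of $T^*$ to be at least locally Lipschitz. This is not a pedantic point here: in the paper's application (the proof of the H\"ormander criterion) the weight is $\phi(z)=\pi|z|^2+\psi(p(z))/\gamma$ with $\psi$ having logarithmic poles along the lattice, so $\phi$ is genuinely singular and the smooth-weight version of the theorem does not suffice. The standard repair is a global regularization: set $\phi_\varepsilon:=\phi*\rho_\varepsilon$, which decreases to $\phi$ as $\varepsilon\downarrow 0$; since $(\phi-\delta|z|^2)*\rho_\varepsilon$ is psh and $|z|^2*\rho_\varepsilon=|z|^2+c_\varepsilon$ for a constant $c_\varepsilon$, the bound $\bigl(\partial^2\phi_\varepsilon/\partial z_j\partial\bar z_k\bigr)\ge\delta I$ persists. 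Your argument then yields $u_\varepsilon$ with $\dbar u_\varepsilon=v$ and
$$
\int_{\mathbb C^n}|u_\varepsilon|^2e^{-\phi_\varepsilon}\ \le\ \frac1\delta\int_{\mathbb C^n}|v|^2e^{-\phi_\varepsilon}\ \le\ \frac1\delta\int_{\mathbb C^n}|v|^2e^{-\phi},
$$
the last step because $\phi_\varepsilon\ge\phi$. Since $e^{-\phi_{\varepsilon}}$ increases as $\varepsilon$ decreases, the family $\{u_\varepsilon\}$ is bounded in $L^2(e^{-\phi_{\varepsilon_0}})$ for each fixed $\varepsilon_0$, so a diagonal subsequence converges weakly to some $u$ with $\dbar u=v$ in the sense of distributions, and monotone convergence gives $\int|u|^2e^{-\phi}\le\frac1\delta\int|v|^2e^{-\phi}$. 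Without this limiting step the theorem is only established for $C^2$ weights, which is strictly weaker than what the paper uses.
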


\begin{proof}[Proof of the H\"ormander criterion (Theorem \ref{th:hor})]
Notice that the $\beta$-Seshadri constant does not depend on the choose of $x\in \mathbb C^n/\Gamma$. Thus, if  the H\"ormander constant is bigger than one then there exist $\gamma>1$ and an  $\omega_{\rm euc}$-psh function $\psi$ on $\mathbb C^n/\Gamma$ such that $\psi=\gamma T_\beta$ near $0\in \mathbb C^n/\Gamma$  for some $\beta\in \mathcal B$. Let
$$
p: \mathbb C^n \to \mathbb C^n/\Gamma,
$$
be the natural quotient mapping. Fix $c=\{c_\lambda\}$ such that
$$
\sum_{\lambda\in \Gamma} |c_\lambda|^2 e^{-\pi|\lambda|^2}=1.
$$ 
Let us apply Theorem \ref{th:hor1} to 
$$
\phi(z):=\pi|z|^2+\frac{\psi(p(z))}{\gamma}, \ \ \ v(z):= \sum_{\lambda\in\Gamma}  c_\lambda e^{\pi\bar\lambda(z-\lambda)} \dbar \chi(|z-\lambda|),
$$ 
where $\chi$ is a smooth function on $\mathbb R$ that is equal to $1$ near the origin and equals to $0$ outside a smooth ball of radius $r$. Let us take $r$ such that
$$
\{z\in\mathbb C^n: |z-\lambda|<r\} \cap \{z\in\mathbb C^n: |z-\lambda'|<r\}=\emptyset, \ \ \forall \ \lambda\neq \lambda' \in \Gamma.
$$
Then we know that $v$ is smooth, $\dbar v=0$ and 
$$
\int_{\mathbb C^n} |v|^2 e^{-\phi} < C,
$$
for some constant thay does not depend on the sequence $c=\{c_\lambda\}$. Moreover, since $\psi$ is $\omega_{\rm euc}$-psh, we know that $\phi(z)-(1-\gamma^{-1})\pi |z|^2$ is plurisubharmonic. Thus Theorem \ref{th:hor1} implies that there exists a smooth function $u$ such that $\dbar u=v$ and 
\begin{equation}\label{eq:u}
\int_{\mathbb C^n} |u|^2 e^{-\phi}<\frac{C}{(1-\gamma^{-1})\pi}.
\end{equation}  
By a direct computation we know that  $e^{-T_\beta}$ is not integrable near $0\in \mathbb C^n/\Gamma$, hence $e^{-\phi}$ is not integrable near $\Gamma$ and \eqref{eq:u} implies that $u$ vanishes at $\Gamma$. Take
$$
F(z)= \sum_{\lambda\in\Gamma}  c_\lambda e^{\pi\bar\lambda(z-\lambda)}  \chi(|z-\lambda|) - u(z),
$$
we know that $F$ is holomorphic in $\mathbb C^n$,
$$
\int_{\mathbb C^n} |F(z)|^2 e^{-\pi|z|^2} <C_1,
$$
for some constant $C_1$ does not depend on $c$ (notice that $\psi$ is bounded from above) and $F(\lambda)=c_\lambda$ for all $\lambda\in\Gamma$. Thus $\Gamma$ is a set of interpolation. The final statement is a direct consequence pf Proposition \ref{pr:hor}, which will be proved in section \ref{ss:dem}.
\end{proof}

In order to estimate the $L^2$ norm of the extension $F$ in the above proof, we shall introduce the following Ohsawa--Takegoshi type theorem \cite{OT} proved by Berndtsson and Lempert (see \cite[Theorem 3.8]{BL}, the main theorem in \cite{GZ} and \cite{Blocki} for related results). 

\begin{theorem}\label{th:OT-xu} Let $\Gamma$ be a lattice in $\mathbb C^n$. Assume that there exists a non positive $\Gamma$ invariant function $\psi$ on $\mathbb C^n$ such that $\psi(z)+\pi|z|^2$ is plurisubharmonic on $\mathbb C^n$, $\psi$ is smooth outside $\Gamma$ and $
\psi(z) - \gamma \log |z|^2$
is bounded near the origin for some constant $\gamma>n$. Then for every sequence of complex numbers $\{c_{\lambda}\}_{\lambda\in \Gamma}$ with 
$
\sum_{\lambda\in \Gamma} |c_\lambda|^2 e^{-\pi |\lambda|^2}=1,
$
there exists $F\in \mathcal F^2$ such that $F(\lambda)=c_\lambda$ for all $\lambda\in \Gamma$ and
$$
||F||^2  \leq  \left(1-\frac n\gamma\right)^{-1}  \cdot  \frac{\pi^n}{n!} \cdot e^{-\frac n\gamma \rho}, \ \  \ \rho:= \liminf_{z\to 0}  
\psi(z) - \gamma \log |z|^2.
$$
\end{theorem}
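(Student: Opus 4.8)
The plan is to run the H\"ormander $\dbar$-argument of the preceding proof, but with the singular weight $\psi$ itself playing the role of the pole, and to track constants carefully so as to extract the factor $(1-n/\gamma)^{-1}\cdot\frac{\pi^n}{n!}\cdot e^{-\frac n\gamma\rho}$. First I would fix a sequence $\{c_\lambda\}$ normalized by $\sum_\lambda|c_\lambda|^2e^{-\pi|\lambda|^2}=1$, and build a smooth almost-holomorphic candidate $G(z):=\sum_{\lambda\in\Gamma}c_\lambda\,e^{\pi\bar\lambda(z-\lambda)}\chi(|z-\lambda|)$, where $\chi$ is a cutoff supported in disjoint balls of small radius $\varepsilon$ around the lattice points; note $e^{\pi\bar\lambda(z-\lambda)}$ is the normalized reproducing-kernel factor so that $|e^{\pi\bar\lambda(z-\lambda)}|^2e^{-\pi|z|^2}$ integrates nicely near $\lambda$. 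Then $v:=\dbar G$ is smooth, $\dbar$-closed, and supported in the annuli $\varepsilon/2<|z-\lambda|<\varepsilon$.

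The key point is the choice of weight: I would apply Theorem~\ref{th:hor1} with $\phi(z):=\pi|z|^2+(1-\tfrac n\gamma)\big(\psi(z)-\pi|z|^2\big)$ — wait, more carefully one should scale so that the pole of the weight has the right Lelong number. The clean choice is to let $\phi_t(z):=\pi|z|^2+t\,\psi(z)$ and use that $\psi+\pi|z|^2$ psh gives $\phi_t-(1-t)\pi|z|^2=\pi t|z|^2+t(\psi+\pi|z|^2)-t\pi|z|^2+\cdots$; so $\phi_t-\delta|z|^2$ is psh with $\delta=(1-t)\pi$ for $0\le t<1$. Near the origin $\phi_t\sim t\gamma\log|z|^2$, so $e^{-\phi_t}$ fails to be integrable exactly when $t\gamma\ge n$, i.e.\ $t\ge n/\gamma$. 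Choosing $t=n/\gamma$ (legitimate since $\gamma>n$ forces $t<1$) makes $e^{-\phi_t}$ just barely non-integrable at each $\lambda$, which is what will force the solution $u$ of $\dbar u=v$ to vanish on $\Gamma$, so that $F:=G-u$ is holomorphic with $F(\lambda)=c_\lambda$. H\"ormander's estimate then gives $\int|u|^2e^{-\phi_t}\le\frac1{(1-t)\pi}\int|v|^2e^{-\phi_t}$.

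Two estimates then have to be pushed to optimality rather than just to finiteness. The first is the right-hand side: one lets $\varepsilon\to0$ and uses that $v$ is concentrated near $\Gamma$, that $\psi(z)-\gamma\log|z-\lambda|^2\to\rho$ (translated) there, and that $\int_{\mathbb C^n}e^{-\pi|z|^2}(\tfrac i2\partial\dbar|z|^2)^n/n!$-type Gaussian integrals produce the $\pi^n/n!$; the cutoff derivative $\dbar\chi$ contributes a term that, after integration against the singular weight $e^{-t\psi}\sim|z-\lambda|^{-2t\gamma}=|z-\lambda|^{-2n}$, limits to $\sum_\lambda|c_\lambda|^2 e^{-\pi|\lambda|^2}\cdot e^{-t\rho}\cdot(\text{volume factor})=e^{-t\rho}\cdot(\cdots)$ — this is the delicate residue-type computation and it is where the $e^{-\frac n\gamma\rho}$ and the remaining numerical constants are born. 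The second is that near $\Gamma$ the weight $e^{-\phi_t}$ with the extra $e^{-t\psi}$ beats $e^{-\pi|z|^2}$, so $\int_{\mathbb C^n}|F|^2e^{-\pi|z|^2}\le\int_{\mathbb C^n}|F|^2e^{-\phi_t}$ (using $\psi\le0$), reducing the $\mathcal F^2$-norm bound on $F$ to the weighted bound already obtained for $G$ and $u$; assembling $\|F\|^2\le(1+\text{small})(\|G\|_{\phi_t}^2+\|u\|_{\phi_t}^2)$ and optimizing gives the stated $(1-n/\gamma)^{-1}$.

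The main obstacle I anticipate is precisely the sharp constant in the mass-concentration step: getting finiteness is routine, but to land exactly on $(1-\frac n\gamma)^{-1}\frac{\pi^n}{n!}e^{-\frac n\gamma\rho}$ one must compute the limit as $\varepsilon\to0$ of $\int|\dbar\chi|^2 e^{-\phi_t}$ against the singular weight exactly, carefully separating the Gaussian kernel contribution (giving $\pi^n/n!$ via the model integral $\int_{\mathbb C^n}|z|^{-2n}\cdot(\text{cutoff annulus})\cdot e^{-\pi|z|^2}$ normalized correctly) from the $e^{-t\rho}$ factor coming from $\rho=\liminf(\psi-\gamma\log|z|^2)$ — and then checking that the H\"ormander $\frac1{(1-t)\pi}$ prefactor combines with these to give the clean closed form. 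Invoking the Berndtsson--Lempert sharp Ohsawa--Takegoshi theorem~\cite{BL}, applied on the torus $X=\mathbb C^n/\Gamma$ with the divisor being the single point $p(0)$ and the singular metric determined by $\psi$, short-circuits this: their theorem already contains the optimal constant, and one only has to match notation — the factor $(1-n/\gamma)^{-1}$ is their loss term when the Lelong number/weight exponent is $\gamma$ rather than exactly $n$, and $e^{-\frac n\gamma\rho}$ is the contribution of the Robin-type constant $\rho$ of the weight at the pole.
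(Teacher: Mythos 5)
Your proposal is correct and, in its final form, is essentially the paper's own proof: both apply the Berndtsson--Lempert sharp Ohsawa--Takegoshi theorem to the rescaled weight $\pi|z|^2+\frac n\gamma\psi$, whose curvature is bounded below by $\left(1-\frac n\gamma\right)i\partial\dbar(\pi|z|^2)$ and whose Lelong number at each lattice point is exactly $n$, with $\psi\le 0$ giving the comparison back to the $\mathcal F^2$-norm. The direct H\"ormander mass-concentration computation you sketch first is, as you yourself note, not needed once the sharp theorem of \cite{BL} (or \cite{GZ}) is invoked.
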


\begin{proof} Since $\psi$ is $\Gamma$ invariant, we know that $\psi$ has isolated order $\gamma$ log poles at $\Gamma$, one may use Ohsawa--Takegoshi extension theorem to extend $L^2$ functions from $\Gamma$ to $\mathbb C^n$. Denote by $F$ the extension with minimal $L^2$ norm. By our assumption
$$
i\partial\dbar \left(\frac{n}{\gamma} \psi(z)+\pi|z|^2\right)\geq
 \left(1-\frac n\gamma\right) i\partial\dbar (\pi|z|^2).
$$
Hence \cite[Theorem 3.8]{BL} or the main theorem in \cite{GZ} implies
$$
\left(1-\frac n\gamma\right) \int_{\mathbb C^n}|F|^2 e^{-\pi |z|^2} \leq  \frac{\pi^n}{n!} \limsup_{z\to 0} e^{-\frac n\gamma(\psi(z)-\gamma \log|z|^2)},
$$
thus our theorem follows.
\end{proof}

\subsection{Transcendental lattices and jet interpolations}\label{se:next} Let us first introduce the following definition for jet interpolations. 

\begin{definition}\label{de:k-int} Let $k\geq 0$ be an integer. Let $\Gamma$ be a lattice in $\mathbb C^n$. Put
$$
N_k:=\{\alpha=(\alpha_1, \cdots, \alpha_n)\in \mathbb Z_n: \alpha_j\geq 0, \ \sum \alpha_j \leq k\}.
$$
We say that $\Gamma$ is a set of $k$-jet interpolation for $\mathcal F^2$ if there exists a constant $C>0$ such that for every sequence of complex numbers $\{c_{\lambda, \alpha}\}_{\lambda\in \Gamma, \alpha\in N_k }$ with 
$
\sum_{\lambda\in \Gamma, \alpha\in N_k} |c_{\lambda, \alpha}|^2 e^{-\pi |\lambda|^2}=1,
$
there exists $F\in \mathcal F^2$ with
$$
\left(e^{\pi |z|^2}\partial^{\alpha} (e^{-\pi |z|^2} F)\right)  \big|_{z=\lambda} = c_{\lambda,\alpha}, \ \  \forall \,\lambda\in\Gamma, \,\alpha\in N_k, \ \ \partial^\alpha f :=\frac{\partial^{\alpha_1+\cdots+\alpha_n} f}{\partial z_1^{\alpha_1}\cdots \partial z_n^{\alpha_n}},
$$ 
and $||F||^2 \leq C$.
\end{definition}

The proof of the H\"ormander criterion above also implies the following result. 

\begin{theorem}\label{th:jet} Let $k\geq 0$ be an integer. Let $\Gamma$ be a transcendental lattice in $\mathbb C^n$. Assume that
\begin{equation}\label{eq:jet}
|\Gamma| > \frac{(n+k)^n}{n!},
\end{equation}
then $\Gamma$ is a set of $k$-jet interpolation for $\mathcal F^2$.
\end{theorem}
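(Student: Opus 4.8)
The plan is to run the $\dbar$-argument used for the H\"ormander criterion (Theorem \ref{th:hor}), but with a weight whose pole at each lattice point has order $n+k$ instead of $n$, and with the constant local data $c_\lambda e^{\pi\bar\lambda(z-\lambda)}$ replaced by explicit polynomial jets. First the weight: since $\Gamma$ is transcendental, Proposition \ref{pr:hor} gives $\iota_\Gamma = \frac{(n!)^{1/n}}{n}|\Gamma|^{1/n}$, and --- this is the one place the hypothesis is used --- the proof of Proposition \ref{pr:hor} shows that the supremum $\iota_\Gamma = \sup_\beta \epsilon_x(\omega_{\rm euc};\beta)$ in \eqref{eq:hor1} is attained, in the limit, at the balanced weight $\beta = (1/n,\dots,1/n)$, for which $T_\beta = \log(|z_1|^{2n}+\cdots+|z_n|^{2n})$. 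Assumption \eqref{eq:jet}, namely $|\Gamma| > (n+k)^n/n!$, is equivalent to $\iota_\Gamma > (n+k)/n$, so I may fix $\gamma'$ with $(n+k)/n < \gamma' < \iota_\Gamma$ and a $\Gamma$-invariant function $\psi$ on $\mathbb C^n$ with $\psi+\pi|z|^2$ plurisubharmonic and $\psi = \gamma' T_\beta$ near $0$ (hence near every $\lambda\in\Gamma$, by invariance). Put $s := (n+k)/(n\gamma') \in (0,1)$ and $\phi(z) := \pi|z|^2 + s\,\psi(p(z))$, where $p\colon\mathbb C^n\to\mathbb C^n/\Gamma$ is the quotient map; then $\phi - (1-s)\pi|z|^2 = s(\pi|z|^2 + \psi\circ p)$ is plurisubharmonic, while near each $\lambda$ one has $\phi(z) = \pi|z|^2 + \tfrac{n+k}{n}\log(|z_1-\lambda_1|^{2n}+\cdots)$, so $e^{-\phi}\asymp e^{-\pi|z|^2}|z-\lambda|^{-2(n+k)}$ there.

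Next, the local data and the $\dbar$-equation. Given $\{c_{\lambda,\alpha}\}_{\lambda\in\Gamma,\,\alpha\in N_k}$ with $\sum|c_{\lambda,\alpha}|^2 e^{-\pi|\lambda|^2}=1$, put $Q_\lambda(w) := \sum_{|\alpha|\le k}\frac{c_{\lambda,\alpha}}{\alpha!}w^\alpha$ and $L_\lambda(z) := e^{\pi\bar\lambda(z-\lambda)}Q_\lambda(z-\lambda)$. Using the identity $e^{-\pi|z|^2}e^{\pi\bar\lambda(z-\lambda)} = e^{-\pi|\lambda|^2}e^{-\pi|w|^2-\pi\lambda\bar w}$ with $w=z-\lambda$, together with the fact that every nonzero holomorphic derivative of $e^{-\pi|w|^2-\pi\lambda\bar w}$ vanishes at $w=0$, a short computation gives
\[
e^{\pi|z|^2}\partial^\alpha\!\bigl(e^{-\pi|z|^2}L_\lambda\bigr)\big|_{z=\lambda} = \partial^\alpha Q_\lambda(0) = c_{\lambda,\alpha},\qquad \int_{|z-\lambda|<r}|L_\lambda|^2 e^{-\pi|z|^2} \le C_0\,e^{-\pi|\lambda|^2}\!\!\sum_{|\alpha|\le k}|c_{\lambda,\alpha}|^2,
\]
with $C_0$ independent of $\lambda$ and of the data. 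Taking $\chi$ and $r>0$ as in the proof of Theorem \ref{th:hor} (so the balls $\{|z-\lambda|<r\}$ are pairwise disjoint), set $v := \sum_\lambda L_\lambda\,\dbar\chi(|z-\lambda|) = \dbar\bigl(\sum_\lambda L_\lambda\chi(|z-\lambda|)\bigr)$. Then $v$ is smooth, $\dbar v = 0$, $v$ is supported away from $\Gamma$ (where $e^{-\phi}\asymp e^{-\pi|z|^2}$), and $\int|v|^2 e^{-\phi}\le C_1\sum_{\lambda,\alpha}|c_{\lambda,\alpha}|^2 e^{-\pi|\lambda|^2}=C_1$ with $C_1$ uniform. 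Applying Theorem \ref{th:hor1} with $\delta = (1-s)\pi$ yields a smooth $u$ with $\dbar u = v$ and $\int|u|^2 e^{-\phi}\le C_1/((1-s)\pi)$.

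Finally I would extract $F$ and verify the jet conditions. Near each $\lambda$ one has $v=0$, so $u$ is holomorphic there; since $\int_{|z-\lambda|<r}|u|^2 e^{-\phi}<\infty$ and $e^{-\phi}\asymp|z-\lambda|^{-2(n+k)}$ near $\lambda$, comparison with the leading homogeneous term of $u$ (were its order $d\le k$, the integral would diverge, because $\int_{|w|<r}|w|^{2d-2(n+k)}\,dV=\infty$) forces $\partial^\alpha u(\lambda)=0$ for all $|\alpha|\le k$. Put $F := \sum_\lambda L_\lambda\chi(|z-\lambda|) - u$; then $F$ is entire, since $\dbar F = v-v=0$, and since $\psi$ is bounded above on $\mathbb C^n/\Gamma$, say by $M$, one has $e^{-\pi|z|^2}\le e^{sM}e^{-\phi}$, whence $\|F\|^2 \le 2C_0 + 2e^{sM}C_1/((1-s)\pi) =: C$, a constant independent of the data. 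Near each $\lambda$ only the single term $L_\lambda$ contributes and $\chi\equiv1$, so $F = L_\lambda - u$ there; combining the first displayed identity with the observation that the Leibniz expansion of $e^{\pi|z|^2}\partial^\alpha(e^{-\pi|z|^2}u)$ at $\lambda$ involves only $\partial^\gamma u(\lambda)$ with $|\gamma|\le|\alpha|\le k$ (all zero), we obtain $e^{\pi|z|^2}\partial^\alpha(e^{-\pi|z|^2}F)|_{z=\lambda}=c_{\lambda,\alpha}$ for all $\lambda\in\Gamma$ and $\alpha\in N_k$. Hence $\Gamma$ is a set of $k$-jet interpolation for $\mathcal F^2$, as required.

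The main obstacle --- and the only step that is not a routine upgrade of the proof of Theorem \ref{th:hor} --- is the first one: one must know that the balanced weight $\beta=(1/n,\dots,1/n)$ realizes the H\"ormander constant for a transcendental lattice. This matters because for an unbalanced $\beta$ the function $e^{-\frac{n+k}{n}T_\beta}$ need not fail to be integrable against $|w^\alpha|^2$ for every $|\alpha|\le k$, so the crucial order-$(k+1)$ vanishing of $u$ could break down; it is precisely the balanced $T_\beta$, together with the exact choice $s\gamma' = \tfrac{n+k}{n}$, that makes $\int_{|w|<r}|w|^{2d-2(n+k)}\,dV=\infty$ hold for all $d\le k$, and this is what pins down the exponent $(n+k)^n/n!$ in \eqref{eq:jet}.
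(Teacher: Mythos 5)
Your proof is correct and follows essentially the same route as the paper: the paper's (very terse) argument likewise uses \eqref{eq:Dem} for a transcendental lattice to produce a $\Gamma$-invariant weight with isolated log poles of order $>n+k$ at the lattice points and then reruns the H\"ormander $\dbar$-argument with jet data. You have simply filled in the details the paper leaves implicit --- the polynomial local data $L_\lambda$, the normalization $s\gamma'=(n+k)/n$, and the order-$(k+1)$ vanishing of the correction term $u$ --- all of which check out.
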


\begin{proof} Since $\Gamma$ is transcendental, by \eqref{eq:Dem}, we know that \eqref{eq:jet} implies that  there exists a non positive $\Gamma$ invariant function $\psi$ on $\mathbb C^n$ such that $\psi(z)+\pi|z|^2$ is plurisubharmonic on $\mathbb C^n$, $\psi$ is smooth outside $\Gamma$ and $
\psi(z) - \gamma \log |z|^2$
is bounded near the origin for some constant $\gamma>n+k$. Thus the H\"ormander $L^2$ estimate with singular weight $\psi$ (similar to the proof of the H\"ormander criterion above) gives the above theorem.
\end{proof}

\noindent
\textbf{Remark}: \emph{If $\Gamma$ is transcendental with $|\Gamma|>\frac{n^n}{n!}$ then
$$
\big| \sqrt{\frac{n+k}n} \,\Gamma\big|> \frac{(n+k)^n}{n!},
$$
thus we know that $\sqrt{\frac{n+k}n} \,\Gamma$ is  a set of $k$-jet interpolation for $\mathcal F^2$. In one dimensional case, we have the following theorem \cite{GL0}.}

\begin{theorem}\label{th:jet1}  Let $\Gamma$ be a  lattice in $\mathbb C$. Then the followings are equivalent:
\begin{itemize}
\item[(1)] $\Gamma$ is a set of interpolation for $\mathcal F^2$;
\item[(2)] $\sqrt{k+1}\,\Gamma$ is a set of $k$-jet interpolation for $\mathcal F^2$ for some positive integer $k$;
\item[(3)] $|\Gamma|>1$. 
\end{itemize}
\end{theorem}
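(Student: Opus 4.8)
The plan is to prove the cycle of implications $(3)\Rightarrow(2)\Rightarrow(1)\Rightarrow(3)$, leaning on the one-dimensional Lyubarskii--Seip--Wallst\'en theorem (Theorem \ref{th:LSW}), the H\"ormander/Ohsawa--Takegoshi machinery developed above, and the classical density theory for interpolation in the Bargmann--Fock space.

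For $(3)\Rightarrow(2)$, suppose $|\Gamma|>1$. Since in dimension one every lattice is transcendental (the only positive dimensional analytic subvariety of an elliptic curve $\mathbb C/\Gamma$ is the curve itself), I would invoke Theorem \ref{th:jet}: one has $|\sqrt{k+1}\,\Gamma| = (k+1)|\Gamma| > k+1 = \frac{(1+k)^1}{1!}$ as soon as $|\Gamma|>1$, so $\sqrt{k+1}\,\Gamma$ is a set of $k$-jet interpolation for $\mathcal F^2$ for every positive integer $k$; in particular for some $k$. This is essentially the Remark preceding the theorem, specialized to $n=1$.

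For $(2)\Rightarrow(1)$, the key observation is that $k$-jet interpolation is a \emph{stronger} condition than plain interpolation after rescaling: if $\sqrt{k+1}\,\Gamma$ is a set of $k$-jet interpolation, then in particular (taking all jet data $c_{\lambda,\alpha}$ with $\alpha\neq 0$ equal to zero) $\sqrt{k+1}\,\Gamma$ is a set of interpolation for $\mathcal F^2$. By Theorem \ref{th:LSW} this forces $|\sqrt{k+1}\,\Gamma|>1$, i.e. $(k+1)|\Gamma|>1$, which is weaker than what we want. So the passage $(2)\Rightarrow(3)$ directly is not immediate this way, and I would instead route through a separation/density argument: a set of $k$-jet interpolation of separation $\delta$ behaves, for density purposes, like an ordinary interpolation set in which each point has been split into $k+1$ nearby points, so the relevant upper Beurling density must satisfy $D^+ \le \frac{1}{k+1}$ for the jet problem on $\sqrt{k+1}\,\Gamma$; translating back, $D^+(\Gamma) = (k+1)\,D^+(\sqrt{k+1}\,\Gamma)^{-1}\cdot(\dots)$ — more cleanly, $|\sqrt{k+1}\,\Gamma|/(k+1) > 1$, giving $|\Gamma|>1$, which is exactly $(3)$. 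Concretely I would cite the $k$-jet density criterion of \cite{GL0} (or reprove it by the standard normal-families/perturbation argument) to the effect that $\Lambda$ is a set of $k$-jet interpolation for $\mathcal F^2$ if and only if $D^+(\Lambda) < \frac{1}{k+1}$, and then read off $(2)\Rightarrow(3)$ and also $(3)\Rightarrow(2)$ as a sanity check.

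Finally $(1)\Rightarrow(3)$ is precisely the "only if" direction of Theorem \ref{th:LSW}, which I may quote directly. The main obstacle is the quantitative bookkeeping in $(2)\Leftrightarrow(3)$: one must be careful that the scaling $\Gamma\mapsto\sqrt{k+1}\,\Gamma$ interacts correctly with both the covolume (which scales by $k+1$) and the "multiplicity $k+1$" built into $k$-jet data, so that the two effects cancel and the threshold stays at $|\Gamma|=1$ independently of $k$; establishing the sharp $k$-jet density characterization (the $\frac{1}{k+1}$ in $D^+ < \frac{1}{k+1}$) is where the real one-dimensional input — beyond what the H\"ormander sufficient condition gives — is needed, and this is exactly the content borrowed from \cite{GL0}.
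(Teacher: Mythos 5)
Your proposal is correct, but note that the paper does not actually prove Theorem \ref{th:jet1}; it is stated as a quotation of \cite{GL0}, so there is no internal argument to compare against. Your reconstruction is a sensible hybrid: $(3)\Rightarrow(2)$ is exactly the remark preceding the theorem (Theorem \ref{th:jet} with $n=1$, using that every one-dimensional lattice is transcendental and $|\sqrt{k+1}\,\Gamma|=(k+1)|\Gamma|>k+1$), and $(1)\Leftrightarrow(3)$ is Theorem \ref{th:LSW}, so both of these come for free from the paper's own machinery. You correctly identify that the only genuinely nontrivial arrow is $(2)\Rightarrow(3)$, that the naive ``forget the jets'' reduction only yields $(k+1)|\Gamma|>1$, and that one must instead invoke the sharp density characterization of $k$-jet interpolation (for lattices: $\Lambda$ is $k$-jet interpolating iff $|\Lambda|>k+1$, equivalently $D^+(\Lambda)<\tfrac{1}{k+1}$) from \cite{GL0}. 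It is worth observing that the paper's own Theorem \ref{th:pre} with $n=1$ gives only $|\Gamma|\geq\frac{(k+1)}{(1+k)}\cdot\frac{1}{1!}=1$, i.e.\ the non-strict inequality, so the strictness in $(2)\Rightarrow(3)$ really does require the external sharp result (or a Balian--Low-type strictness argument) exactly as you say; your honest flagging of this dependence matches what the paper itself does by citing \cite{GL0} for the whole statement.
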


For the higher-dimensional cases, we can prove the following result.

\begin{theorem}\label{th:pre} Let $\Gamma$ be a transcendental lattice in $\mathbb C^n$.  If  $\sqrt{\frac{n+k}n} \,\Gamma$ is  a set of $k$-jet interpolation for $\mathcal F^2$ for some non-negative integer $k$ then $|\Gamma| \geq \frac{(k+1)^n}{(n+k)^n}\frac{n^n}{n!}$.
\end{theorem}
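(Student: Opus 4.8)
The statement to prove is Theorem \ref{th:pre}: if $\Gamma$ is transcendental and $\sqrt{(n+k)/n}\,\Gamma$ is a set of $k$-jet interpolation for $\mathcal F^2$, then $|\Gamma|\geq \frac{(k+1)^n}{(n+k)^n}\cdot\frac{n^n}{n!}$. The plan is to argue by contradiction and volume comparison: a $k$-jet interpolation bound for a lattice $\Lambda:=\sqrt{(n+k)/n}\,\Gamma$ forces the existence, for each prescribed jet datum, of an $F\in\mathcal F^2$ with controlled norm; counting the dimension of the jet data per lattice point ($\#N_k = \binom{n+k}{n}$) against the ``number of degrees of freedom'' available in $\mathcal F^2$ per unit volume will yield a necessary lower bound on $|\Lambda|$, hence on $|\Gamma|$.

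First I would set up the standard density/counting heuristic rigorously. The Bargmann--Fock space $\mathcal F^2$ on $\mathbb C^n$ has reproducing kernel $e^{\pi z\cdot\bar w}$, and the ``number of independent sampling conditions'' that can be imposed on $\mathcal F^2$ in a region of Euclidean volume $V$ is asymptotically $V$ (the Beurling density normalization built into the weight $e^{-\pi|z|^2}$). Imposing a full $k$-jet at each point of $\Lambda$ amounts to $\binom{n+k}{n}$ conditions per lattice point, i.e. a density of $\binom{n+k}{n}/|\Lambda|$ conditions per unit volume. For these to be simultaneously solvable with uniform norm control (which is exactly what ``set of $k$-jet interpolation'' asserts), one needs this density to be at most $1$, giving $|\Lambda|\geq\binom{n+k}{n}$. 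Substituting $|\Lambda| = \big((n+k)/n\big)^n|\Gamma|$ yields $|\Gamma|\geq \binom{n+k}{n}\cdot\frac{n^n}{(n+k)^n} = \frac{(n+k)!}{n!\,k!}\cdot\frac{n^n}{(n+k)^n}$, which is \emph{not} quite the claimed bound $\frac{(k+1)^n}{(n+k)^n}\cdot\frac{n^n}{n!}$. Since $\binom{n+k}{n}\ge \frac{(k+1)^n}{n!}$ (because $(n+k)!/k! = \prod_{j=1}^n(k+j)\ge(k+1)^n$), the claimed bound is in fact weaker, so I would actually aim to prove it via a cleaner route that does not require the sharp constant $\binom{n+k}{n}$: it suffices to produce, from $k$-jet interpolation of $\Lambda$, a genuine interpolation statement for a sublattice or rescaled lattice and then quote a known volume lower bound.

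The cleaner route I would pursue: show that $k$-jet interpolation for $\Lambda$ implies ordinary interpolation for $\frac{1}{\sqrt{k+1}}\Lambda$ in the transcendental regime — this is precisely the analogue, in the direction we need, of the equivalence (1)$\Leftrightarrow$(2) in Theorem \ref{th:jet1} for $n=1$. Concretely: if one can interpolate all $k$-jets on $\Lambda$, one can in particular interpolate values on any configuration of $(k+1)^n$ points clustered near each $\lambda\in\Lambda$ (a ``limiting'' argument: prescribing values at $\lambda+\varepsilon\mu$ for $\mu$ in a box and letting $\varepsilon\to0$ recovers jet data up to order $k$ in each variable, modulo an invertible change of basis on the jet space — here the multidimensional subtlety is that box-type jets $\prod z_j^{\alpha_j}$, $\alpha_j\le k$, are a \emph{larger} space than $N_k$, so one should instead cluster $\binom{n+k}{n}$ points in general position). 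Then by a Vandermonde/general-position argument the value-interpolation problem on the clustered set, rescaled, is uniformly solvable, so $\frac{1}{\sqrt{k+1}}\Lambda$ (or an appropriate rescaling) is a set of interpolation for $\mathcal F^2$; applying the converse direction of the H\"ormander theory / the necessity part of Theorem \ref{th:hor} (which in the transcendental case says interpolation forces $|\cdot|>n^n/n!$, or a suitable non-strict version in the limit) gives $\big|\tfrac{1}{\sqrt{k+1}}\Lambda\big|\ge \frac{n^n}{n!}$, i.e. $|\Lambda|\ge(k+1)^n\frac{n^n}{n!}$, hence $|\Gamma| = (n/(n+k))^n|\Lambda| \ge \frac{(k+1)^n}{(n+k)^n}\cdot\frac{n^n}{n!}$, as desired.

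The main obstacle will be the passage from $k$-jet interpolation to ordinary interpolation of a clustered point set with \emph{uniform} bounds, in several variables. In one variable this is classical (a jet of order $k$ at a point is the limit of $k+1$ nearby values, and the change-of-basis matrix is Vandermonde with controllable inverse), but in $\mathbb C^n$ the monomials $\{z^\alpha : \alpha\in N_k\}$ do not factor through a product structure, so one must choose the clustering directions so that the associated multivariate ``Vandermonde'' matrix is uniformly invertible as the cluster shrinks, and track how the interpolation constant degrades. I would handle this by fixing once and for all a set of $\binom{n+k}{n}$ generic vectors $\{v_i\}\subset\mathbb C^n$ for which the matrix $(v_i^{\alpha})_{i,\alpha\in N_k}$ is invertible, clustering at $\lambda+\varepsilon v_i$, using the $k$-jet interpolation bound for $\Lambda$ at each $\lambda$ to solve, then taking $\varepsilon\to0$ with a normal-families argument in $\mathcal F^2$ to extract a limit $F$ realizing the prescribed jet — this shows $k$-jet interpolation is itself stable, and simultaneously that finite clustered configurations are interpolation sets with the same constant. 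The remaining ingredient — that a \emph{limit} of interpolation lattices satisfies the non-strict volume inequality $|\Gamma|\ge \frac{(k+1)^n}{(n+k)^n}\frac{n^n}{n!}$ rather than strict — follows from openness of the interpolation property together with Proposition \ref{pr:hor}, applied to perturbations, and I would spell that out at the end.
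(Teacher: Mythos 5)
Your argument breaks down at two points, both in the ``cleaner route'' you ultimately pursue. First, the reduction from $k$-jet interpolation of $\Lambda=\sqrt{(n+k)/n}\,\Gamma$ to ordinary interpolation of $\tfrac{1}{\sqrt{k+1}}\Lambda$ does not follow from your clustering construction: even if the limiting/Vandermonde argument were carried out with uniform control, it would produce interpolation for the \emph{clustered} configurations $\{\lambda+\varepsilon v_i\}$, which is a different discrete set from the rescaled lattice $\tfrac{1}{\sqrt{k+1}}\Lambda$. In several variables interpolation is not determined by density alone (Theorem \ref{th:GL-1} exhibits a lattice with Seshadri constant $>1$ that is not interpolating, while other lattices of the same covolume are), so interpolation cannot be transferred between configurations of equal density; the equivalence you want is essentially the open implication (2)$\Rightarrow$(1) of Conjecture B. Second, and fatally, the ``necessity part'' you invoke --- that interpolation of a transcendental lattice forces covolume $>n^n/n!$ --- is not a theorem of this paper: Theorem \ref{th:hor} is a sufficiency criterion only, and the available converse (Theorem \ref{th:converse}, via Balian--Low/Lindholm) gives only covolume $>1$ in the transcendental case. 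The bound $>n^n/n!$ is precisely the open direction (1)$\Rightarrow$(3) of Conjecture B, the statement Theorem \ref{th:pre} is designed to \emph{motivate}; assuming it makes the argument circular.

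The paper's proof runs on an entirely different mechanism. The hypothesis is used only to produce (by a Balian--Low type argument) a nonzero $F\in\mathcal F^2$ all of whose jets $\nabla^\alpha F$, $\alpha\in N_k$, vanish on $\Gamma_k:=\sqrt{(n+k)/n}\,\Gamma$. One then forms the extremal function $G(z)=\sup |F(z)|^2e^{-\pi|z|^2}$ over this family (with $\|F\|\le 1$); twisted translations $T_\lambda$ show $G$ is $\Gamma_k$-invariant, $\psi:=\log G$ satisfies $\omega_{\rm euc}+dd^c\psi\ge 0$, and $\psi-(k+1)\log|z|^2$ is bounded above near $0$ because every member of the family vanishes to order $k+1$ there. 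Transcendence forces the common zero set $\{\psi=-\infty\}$ to be discrete, so Proposition \ref{pr:hor2} and \eqref{eq:Dem} (Tosatti's Seshadri identity) apply and give $(n!\,|\Gamma_k|)^{1/n}\ge k+1$, which is exactly the claimed bound. If you want to salvage your first, discarded density count: a rigorous clustering argument combined with Lindholm's necessary condition $D^{+}(S)\le 1$ for interpolating sets would in fact yield the stronger bound $|\Lambda|\ge\binom{n+k}{n}$, but making the clustered sets genuinely interpolating (with constants depending on $\varepsilon$) requires a perturbation argument that you have not supplied, and it is a different proof from the one in the paper.
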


\begin{proof} Put $\nabla^\alpha F:=e^{\pi |z|^2}\partial^{\alpha} (e^{-\pi |z|^2} F)$ and $\Gamma_k:=\sqrt{\frac{n+k}n} \,\Gamma$. Assume that $\sqrt{\frac{n+k}n} \,\Gamma$ is  a set of $k$-jet interpolation for $\mathcal F^2$. Let us define
\begin{equation}\label{eq:pre}
G(z):=\sup\{|F(z)|^2e^{-\pi|z|^2}: F\in \mathcal F^2, \ \nabla^\alpha F(\lambda)=0, \ \forall \ \lambda\in \Gamma_k\}.
\end{equation}
Then by the Balian-Low type theorem, we know that $G$ is not identically zero on $\mathbb C^n$. We claim that $G$ is $\Gamma_k$ invariant. In fact, if we put
$$
T_\lambda F(z):=F(z+\lambda) e^{-\pi|\lambda|^2/2}e^{-\pi z\bar\lambda}.
$$
Then $F \mapsto T_\lambda F$ is an isomorphism on $\mathcal F^2$ with $||F||=||T_\lambda F||$, $|T_\lambda F(0)|^2=|F(\lambda)|^2e^{-\pi|\lambda|^2}$ and
$$
\nabla^\alpha (T_\lambda F)(0)=0 \Leftrightarrow \nabla^\alpha F(\lambda)=0.
$$
Hence $G$ is $\Gamma_k$ invariant. Put $\psi=\log G$, we know that $\psi$ is $\Gamma$ invariant, $\psi(z)+\pi|z|^2$ is plurisubharmonic on $\mathbb C^n$ and $\psi(z)-(k+1)\log |z|^2$ is bounded above near $z=0$. Since $\Gamma$ is transcendental, we know that $\Gamma_k$ is also transcendental, thus the $\Gamma_k$ invariant analytic set $\{\psi=-\infty\}$ is discrete. Hence \eqref{eq:Dem} gives that
$$
(n! |\Gamma_k|)^{1/n} \geq k+1,
$$
from which our theorem follows.
\end{proof}

\noindent
\textbf{Remark}: \emph{The above theorem is our motivation for the conjecture A after Theorem \ref{th:main-01}, moreover, notice that $\lim_{k\to \infty} \frac{k+1}{n+k}=1$, the above theorem also suggests the following higher-dimensional analogue of Theorem \ref{th:jet1}.}

\medskip
\noindent
\textbf{Conjecture B}: \emph{Let $\Gamma$ be a transcendental lattice in $\mathbb C^n$.  Then the followings are equivalent:
\begin{itemize}
\item[(1)] $\Gamma$ is a set of interpolation for $\mathcal F^2$;
\item[(2)] $\sqrt{\frac{n+k}n} \,\Gamma$ is  a set of $k$-jet interpolation for $\mathcal F^2$ for some positive integer $k$;
\item[(3)] $|\Gamma|>\frac{n^n}{n!}$. 
\end{itemize}}

\medskip

\noindent
\textbf{Remark}: \emph{From Proposition \ref{pr:1.1}, we know that the above conjecture implies conjecture $A$.}

\section{H\"ormander constants and K\"ahler embeddings} 

\subsection{H\"ormander constants and proof of Proposition \ref{pr:hor}}\label{ss:dem}  In Definition \ref{de:hor} and Definition \ref{de:beta} we have defined the H\"ormander constants and the $\beta$-Seshadri constants for an $n$-dimensional compact K\"ahler manifold $(X, \omega)$. If all $\beta_j=1/n$ and $\omega \in c_1(L)$ for some ample line bundle $L$ then we have
\begin{equation}\label{eq:D-beta}
n \epsilon_x(\omega;\beta)= 
\epsilon_x(\omega),
\end{equation}
where $\epsilon_x(\omega)$ denotes  the Seshadri constant introduced by Demailly in \cite{Dem-sin} (in fact, from (6.2) in \cite{Dem-sin}, we have $n \epsilon_x(\omega;\beta)=\gamma(L,x)$, but Theorem 6.4 in \cite{Dem-sin} tells us that $\gamma(L,x)$ is precisely the Seshadri constant used in algebraic geometry when $L$ is ample). In general, the condition $\sum \beta_j=1$ is used to make sure that
$$
\sup\{c\geq 0: e^{-cT_\beta} \ \text{is integrable near $z=0$}\}=1.
$$

\medskip

\noindent
\textbf{Remark.}  \emph{For transcendental $\omega$ on a general compact K\"ahler manifold, we know that (see Theorem \ref{th:CEL} below for the proof and generalizations) $ n \epsilon_x(\omega;\beta) 
$ is equal to the generalized Seshadri constant (also denoted by $\epsilon_x(\omega)$) defined by Tosatti in \cite[section 4.4]{Tosatti}. In this general case, we shall prove the following result.}

\begin{proposition}\label{pr:hor2} Let $(X, \omega)$ be an $n$-dimensional compact K\"ahler manifold. Assume that $X$ has no non-trivial analytic subvarieties, then
\begin{align}\label{eq:Dem}
\sup_{\beta\in \mathcal B} \epsilon_x(\omega; \beta) = \frac{\epsilon_x(\omega)}n =\frac{ \left(\int_X \omega^n\right)^{1/n}}{n}.
\end{align}
\end{proposition}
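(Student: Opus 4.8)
The plan is to prove the two identities in \eqref{eq:Dem} by sandwiching all three quantities between a common upper and lower bound. Write $\beta_0:=(1/n,\dots,1/n)\in\mathcal B$, and recall the convention fixed in the remark preceding the statement (and made precise by Theorem~\ref{th:CEL}): for a transcendental class $\omega$ the generalized Seshadri constant, denoted $\epsilon_x(\omega)$, equals $n\,\epsilon_x(\omega;\beta_0)$. Hence the middle term $\epsilon_x(\omega)/n$ is literally $\epsilon_x(\omega;\beta_0)$, which is trivially $\le\sup_{\beta\in\mathcal B}\epsilon_x(\omega;\beta)$. So it suffices to establish
\[
\text{(i)}\quad \epsilon_x(\omega;\beta)\le \frac1n\Big(\int_X\omega^n\Big)^{1/n}\ \ (\text{all }\beta_j>0,\ \beta_1+\cdots+\beta_n=1);\qquad \text{(ii)}\quad \epsilon_x(\omega;\beta_0)\ge \frac1n\Big(\int_X\omega^n\Big)^{1/n}.
\]
Here (i) holds on every compact K\"ahler manifold, while (ii) is the place where the hypothesis that $X$ has no proper positive-dimensional analytic subvariety is used. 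Granting (i) and (ii), the case $\beta=\beta_0$ of (i) together with (ii) gives $\epsilon_x(\omega;\beta_0)=\frac1n(\int_X\omega^n)^{1/n}$; then (i) for general $\beta$ shows this common value is also the supremum over $\mathcal B$, and $\epsilon_x(\omega)=n\,\epsilon_x(\omega;\beta_0)$ finishes \eqref{eq:Dem}.

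For (i) I would use Theorem~A, which identifies $\epsilon_x(\omega;\beta)$ with the $\beta$-K\"ahler width $c_x(\omega;\beta)$. If $f\colon B^\beta_r\to X$ is a holomorphic injection with $f(0)=x$ and $f^*\tilde\omega=\tfrac i2\sum_j dz_j\wedge d\bar z_j$ for some $\tilde\omega\in\mathcal K_\omega$, then $f$ pulls back $\tilde\omega^n$ to $n!$ times the Euclidean volume form, so $n!\,\mathrm{vol}(B^\beta_r)\le\int_X\tilde\omega^n=\int_X\omega^n$, the last equality because $\tilde\omega$ is cohomologous to $\omega$. A one-line rescaling of the unit ball (Euclidean volume $\pi^n/n!$) gives $\mathrm{vol}(B^\beta_r)=\pi^n r^{2n}/(n!\,\beta_1\cdots\beta_n)$, whence $(\pi r^2)^n\le(\beta_1\cdots\beta_n)\int_X\omega^n\le n^{-n}\int_X\omega^n$ by the AM--GM inequality (using $\beta_1+\cdots+\beta_n=1$). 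Taking the supremum over admissible $r$ yields (i). This step is soft and uses nothing about subvarieties; it can also be done without Theorem~A, since for an $\omega$-psh function $\psi$ equal to $\gamma T_\beta$ near $x$ one has $(dd^cT_\beta)^n=(\beta_1\cdots\beta_n)^{-1}\delta_x$ near $x$ (pull back $(dd^c\log|w|^2)^n=\delta_0$ by $z\mapsto(z_1^{1/\beta_1},\dots,z_n^{1/\beta_n})$ when the exponents are integers, in general by approximation), so $\omega+dd^c\psi$ carries Monge--Amp\`ere mass $\ge\gamma^n/(\beta_1\cdots\beta_n)$ at $x$ while its total mass is $\le\int_X\omega^n$, and again AM--GM finishes.

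For (ii) I would invoke Tosatti's \cite[Theorem~4.6]{Tosatti}, the same input used for Proposition~\ref{pr:hor}. The point is that the general lower bound for $\epsilon_x(\omega)=n\,\epsilon_x(\omega;\beta_0)$ is governed by the volumes of proper positive-dimensional analytic subvarieties of $X$ through $x$: via Demailly's mass-concentration technique together with the Calabi--Yau theorem for K\"ahler currents one constructs a K\"ahler current in $[\omega]$ concentrating mass at $x$, and the only obstruction to carrying the concentration up to the numerical maximum $n(\int_X\omega^n)^{1/n}$ is such a subvariety. By hypothesis there is none, so the concentration is maximal and $\epsilon_x(\omega)=(\int_X\omega^n)^{1/n}$, i.e.\ (ii). Equivalently, in the language of Theorem~A, there is then no obstruction to a holomorphic embedding of a round ball $B^{\beta_0}_r$ into $(X,\tilde\omega)$ filling almost the whole volume of $X$, so $c_x(\omega;\beta_0)\ge\frac1n(\int_X\omega^n)^{1/n}$.

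The main obstacle is step (ii). Step (i) is merely a cohomological volume bound plus AM--GM, whereas (ii) requires the full strength of Demailly's Calabi--Yau method for K\"ahler currents, in the precise form of Tosatti's theorem, to turn the \emph{absence} of obstructing subvarieties into an \emph{actual} maximal mass concentration and then, after regularization away from $x$, into an honest K\"ahler metric in $[\omega]$ together with the required holomorphic ellipsoid (here, ball) embedding. Everything else is bookkeeping, including the identification of $n\,\epsilon_x(\omega;\beta_0)$ with Tosatti's generalized Seshadri constant, which is the content of Theorem~\ref{th:CEL}.
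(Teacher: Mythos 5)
Your proposal is correct and follows essentially the same route as the paper: the upper bound comes from the Monge--Amp\`ere mass $(dd^cT_\beta)^n_x=(\beta_1\cdots\beta_n)^{-1}$ compared against $\int_X\omega^n$ plus AM--GM (your Theorem~A/volume variant is an equivalent repackaging), and the lower bound comes from Tosatti's \cite[Theorem 4.6]{Tosatti}, whose infimum over subvarieties collapses to $V=X$ under the hypothesis. No gaps.
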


\begin{proof} Note that for every $\beta\in \mathcal B$, by \cite[page 167, Corollary 7.4]{Demailly12} (our definition of $dd^c$ in Definition \ref{de:beta} is half of the one there) we have
\begin{equation}\label{eq:Dem1}
(dd^c T_\beta)^n_x =(\beta_1\cdots \beta_n)^{-1}, \ \ \ \ (dd^c T_\beta)^n_x:=\lim_{r\to 0} \int_{|z-x|<r} (dd^c T_\beta)^n,
\end{equation}
which gives
$$
\int_X \omega^n \geq \epsilon_x(\omega; \beta)^n  (\beta_1\cdots \beta_n)^{-1} \geq \epsilon_x(\omega; \beta)^n  n^n.
$$
Hence
\begin{equation}\label{eq:Dem2}
\sup_{\beta\in \mathcal B} \epsilon_x(\omega; \beta) \leq \frac{ \left(\int_X \omega^n\right)^{1/n}}{n}.
\end{equation}
On the other hand, we have the following identity proved by Tosatti in \cite[Theorem 4.6]{Tosatti}
$$
\epsilon_x(\omega)=\inf_{V\ni x} \left( \frac{\int_V \omega^{\dim V}}{{\rm mult}_x V}\right)^{\frac1{\dim V}},
$$
where the infimum runs over all positive-dimensional irreducible analytic subvarieties $V$ containing $x$ and ${\rm mult}_x V$ denotes the multiplicity of $V$ at $x$. Hence if $X$ has no non-trivial subvarieties then (put $\beta_0= (1/n ,\cdots, 1/n)$, use \eqref{eq:D-beta} and the remark above)
$$
\sup_{\beta\in \mathcal B} \epsilon_x(\omega; \beta)  \geq  \epsilon_x(\omega; \beta_0) =\frac{\epsilon_x(\omega)}n = \frac{ \left(\int_X \omega^n\right)^{1/n}}{n}.
$$
The above inequality and  \eqref{eq:Dem2} together imply \eqref{eq:Dem}. 
\end{proof}

\begin{proof}[Proof of Proposition \ref{pr:hor}] Apply \eqref{eq:Dem} to the case that $X=\mathbb C^n /\Gamma$ and $\omega = dd^c(\pi |z|^2)$, we get immediately Proposition \ref{pr:hor} (note that in this case
$$
\sup_{\beta\in \mathcal B} \epsilon_x(\omega; \beta) = \iota_\Gamma
$$
and $\int_X \omega^n = n! \,|\Gamma|$).
\end{proof}

\subsection{Relation with the $s$-invariant} Our $\beta$-Seshadri constant is closely related to the $s$-invariant introduced by Cutkosky, Ein and Lazarsfeld in \cite{CEL}. 

\begin{theorem}\label{th:CEL} Let $(X, \omega)$ be an $n$-dimensional compact K\"ahler manifold. Assume that $\omega$ lies in the first Chern class of a holomorphic line bundle $L$ on $X$. Fix $\beta=(\beta_1,\cdots, \beta_n)\in \mathbb R^n$ such that all $\beta_j^{-1}$ are positive integers. Then 
\begin{equation}\label{eq:CEL}
\epsilon_x(\omega;\beta)=\frac1{s_{L}(\mathcal I_\beta)} ,
\end{equation}
where $\mathcal I_\beta$ is the ideal of $\mathcal O_X$ generated by $\{z_1^{1/\beta_1}, \cdots, z_n^{1/\beta_n}\}$ and 
$$
s_{L}(\mathcal I_\beta):=\min\{s\in \mathbb R: \mu^*(sL)-E \ \text{is nef}\}
$$
is the $s$-invariant of $\mathcal I_\beta$ with respect to $L$ (see Definition 5.4.1 in \cite{Laz}), where $\mu$ is the blowing-up of $X$ along $\mathcal I_\beta$ with exceptional divisor $E$.
\end{theorem}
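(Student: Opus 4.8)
The plan is to identify both sides of \eqref{eq:CEL} with a common geometric quantity on the blow-up $\mu\colon \widetilde X \to X$ of $X$ along $\mathcal I_\beta$, using the fact that $\mathcal I_\beta$ is a monomial ideal so that the singularity $T_\beta$ is modeled exactly by the exceptional divisor $E$. First I would unwind the definition of the $\beta$-Seshadri constant: a candidate $\omega$-psh function $\psi$ with $\psi=\gamma T_\beta$ near $x$ pulls back, via $\mu$, to a $\mu^*\omega$-psh function on $\widetilde X$ whose Lelong numbers along the components of $E$ are prescribed by the combinatorics of the monomials $z_1^{1/\beta_1},\dots,z_n^{1/\beta_n}$. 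Concretely, if $E$ has irreducible components $E_i$ and $a_i$ denotes the order of vanishing of $\mu^*(z_j^{1/\beta_j})$ along $E_i$, then $\psi\circ\mu - \gamma\log|\cdot|^2_{\mathcal O(-E)}$ extends across $E$, so that $\mu^*\omega + dd^c(\psi\circ\mu) \geq \gamma\,[E']$ for the relevant combination $E'$ of the $E_i$; since $\mathcal I_\beta$ is generated by the $z_j^{1/\beta_j}$ and these are a regular sequence after the normalized blow-up, one checks that the extremal such $\psi$ corresponds exactly to requiring $\mu^*\omega - \gamma E$ to carry a (weak) positive current, i.e.\ to be \emph{pseudoeffective} — and because on $\widetilde X$ over a point the distinction between pseudoeffective and nef for this particular class collapses (the class is $\mu^*$ of an ample-type class minus a multiple of $E$, supported on the contraction of $E$), to $\mu^*(\gamma^{-1}L) - E$ being nef.

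The second step is purely bookkeeping with the $s$-invariant: by Definition 5.4.1 in \cite{Laz}, $s_L(\mathcal I_\beta) = \min\{s : \mu^*(sL) - E \text{ nef}\}$, so $\mu^*(sL)-E$ nef is equivalent to $s \geq s_L(\mathcal I_\beta)$; setting $s = \gamma^{-1}$ this says precisely that the supremum defining $\epsilon_x(\omega;\beta)$ is $1/s_L(\mathcal I_\beta)$, giving \eqref{eq:CEL}. For the direction $\epsilon_x(\omega;\beta) \geq 1/s_L(\mathcal I_\beta)$ one takes $s=s_L(\mathcal I_\beta)$, uses that $\mu^*(sL)-E$ is nef hence carries a positive current with the right singularity along $E$ (here one invokes that $\omega$ is a genuine Kähler class representing $c_1(L)$, and a regularization/Demailly-type argument to produce a current in the class with analytic singularities exactly along $E$), and pushes that current down by $\mu$ to obtain an admissible $\psi$ on $X$. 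For the reverse inequality one runs the same correspondence backwards: an admissible $\psi$ with $\psi = \gamma T_\beta$ produces a positive current in $\mu^*(\gamma^{-1}L)-E$, so this class is pseudoeffective; nefness (needed to conclude $\gamma^{-1}\geq s_L(\mathcal I_\beta)$) then follows because on $\widetilde X$ the class $\mu^*(\gamma^{-1}L)-E$ is relatively trivial over $X$ off the center, so its positivity is detected on curves contracted by $\mu$, where pseudoeffective and nef agree — this is where the hypothesis that $\beta_j^{-1}\in\mathbb Z$ is used, since it guarantees $\mathcal I_\beta$ is an honest (monomial) ideal sheaf with a well-behaved blow-up and integral exceptional divisor.

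The main obstacle I anticipate is the passage between the analytic side (existence of an $\omega$-psh potential with a prescribed monomial-type singularity, in the sense of currents) and the algebro-geometric side (nefness of $\mu^*(sL)-E$): making precise that the \emph{extremal} singular potential corresponds to the \emph{boundary} of the nef cone, rather than merely to the pseudoeffective cone, requires knowing that for this specific class on the blow-up the two cones coincide. I would handle this by restricting attention to curves in $\widetilde X$ contracted by $\mu$ — on such curves $\mu^*L$ is trivial, so nefness of $\mu^*(sL)-E$ reduces to $-E\cdot C \geq 0$ for $C$ in the fiber, which is controlled by the local monomial combinatorics and matches exactly the integrability threshold $\sup\{c : e^{-cT_\beta}\ \text{integrable}\} = 1$ normalization built into Definition \ref{de:beta}. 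Once this local nef $\Leftrightarrow$ positive-current dictionary is in place, the rest is the two inclusions of cones above, together with a standard Demailly regularization to upgrade a nef class to a closed positive current with analytic singularities, and the theorem follows.
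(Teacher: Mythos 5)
Your overall strategy --- pass to the blow-up $\mu$ along $\mathcal I_\beta$ and match a potential with $\gamma T_\beta$ singularity against positivity of the class $\mu^*(\gamma^{-1}L)-E$ --- is the same as the paper's, but the pivotal step where you upgrade pseudoeffectivity to nefness is wrong. You claim that the positivity of $\mu^*(sL)-E$ is ``detected on curves contracted by $\mu$, where pseudoeffective and nef agree.'' The situation is the reverse: on a $\mu$-contracted curve $C$ one has $\mu^*(sL)\cdot C=0$ and $-E\cdot C\geq 0$ automatically, because $\mathcal O(-E)$ is relatively ample for the blow-up of an ideal sheaf; contracted curves therefore impose no condition on $s$ whatsoever. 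Nefness of $\mu^*(sL)-E$ is actually tested on the strict transforms of positive-dimensional subvarieties of $X$ through $x$, and for those pseudoeffectivity of the class gives no control: in general the pseudoeffective threshold of $\mu^*L-tE$ is strictly larger than the nef threshold (this gap is exactly what makes Seshadri constants nontrivial). So your argument for $\epsilon_x(\omega;\beta)\leq 1/s_L(\mathcal I_\beta)$ does not close. The correct route, which is what the paper (following Demailly's Theorem 6.4 in \cite{Dem-sin}) relies on, uses the full strength of the hypothesis that $\psi=\gamma T_\beta$ \emph{exactly} near $x$ and is locally bounded elsewhere: the positive current $\omega+dd^c\psi$ then has all its singular mass concentrated at $x$, and the Lelong-number comparison gives $\int_C\omega\geq\gamma\cdot(\text{the relevant multiplicity of }T_\beta\text{ along }C)$ for every irreducible curve $C\ni x$, which is precisely the missing nefness inequality on the non-contracted curves. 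That is strictly stronger information than pseudoeffectivity of $f^*L-\gamma F$, and it is where the proof actually lives.

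A second, smaller soft spot is the direction $\epsilon_x(\omega;\beta)\geq 1/s_L(\mathcal I_\beta)$: you take $s=s_L(\mathcal I_\beta)$ and assert that the nef class $\mu^*(sL)-E$ carries a positive current with the prescribed analytic singularity along $E$. Nef classes need not carry any such current (they need not even be big), so Demailly regularization cannot be invoked at the threshold. The paper sidesteps this by using that $L$ is ample, hence $1/s_L(\mathcal I_\beta)=\sup\{\gamma\geq 0:\mu^*L-\gamma E\ \text{is ample}\}$; for each $\gamma$ \emph{strictly} below the threshold it passes to a log resolution $f:Y\to X$ of $\mathcal I_\beta$ (Example 5.4.10 in \cite{Laz}), takes a smooth positive metric on the ample class $f^*L-\gamma F$, adds $\gamma\log|s_F|^2$, and pushes the resulting weight down to $X$ to produce an $\omega$-psh function equal to $\gamma T_\beta$ near $x$. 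Replacing your regularization step and your psef-versus-nef step by these two arguments would repair the proof.
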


\begin{proof} First let us prove $\epsilon_x(\omega;\beta)\geq \frac1{s_{L}(\mathcal I_\beta)}$. Note that $L$ is ample since $\omega$ is positive, hence
$$
\frac1{s_{L}(\mathcal I_\beta)}= \sup\{\gamma\geq 0: \mu^*L- \gamma E \ \text{is ample}\}.
$$
By Example 5.4.10 in \cite{Laz}, one may replace $\mu$ by a desingularization $f: Y\to X$ of $\mathcal I_\beta$ with exceptional divisor $F$, more precisely, we have
$$
\frac1{s_{L}(\mathcal I_\beta)}= \sup\{\gamma\geq 0: f^*L- \gamma F \ \text{is ample}\}.
$$
which implies that for every $\gamma <\frac1{s_{L}(\mathcal I_\beta)}$ there is a singular metric $e^{-\phi}$ on $f^*L$ with $\gamma$-log pole along $F$ such that $i\partial\dbar \phi>0$ on $Y$. Then the weight $f_*\phi$ on $L$ will have the $\gamma T_\beta$-singularity, from which we know that $\epsilon_x(\omega;\beta)\geq \gamma$. Hence $\epsilon_x(\omega;\beta)\geq \frac1{s_{L}(\mathcal I_\beta)}$. 

Now let us prove that $\epsilon_x(\omega;\beta)\leq \frac1{s_{L}(\mathcal I_\beta)}$. For every $\gamma < \epsilon_x(\omega;\beta)$, we can find a singular metric $e^{-\psi}$ on $L$ with $\gamma T_\beta$-singularity such that $i\partial\dbar \psi>0$. Then $e^{-f^*\psi}$ defines a singular metric on $f^*L$ with $\gamma$-log pole along $F$ such that $i\partial\dbar (f^*\psi)>0$ on $Y$, from which we know that $\epsilon_x(\omega;\beta)\leq \frac1{s_{L}(\mathcal I_\beta)}$.
\end{proof}

\noindent
\textbf{Remark.}  \emph{Since $\mathcal I_\beta$ are special monomial ideals, it is not hard to find the explicit desingularizations. Let us look at the simple example $\beta=(1,\frac12)$. Then, in this case, we have
$$
\mathcal I_\beta= {\rm Span}\{z_1, z_2^2\}.
$$
First, one may blow up the origin, so $z_1=uv, z_2=u$ gives
$$
{\rm Bl}_0 \mathcal I_\beta = {\rm Span}\{u v, u^2\}, 
$$
then we can blow up the point $(u, v)=(0,0):=\tilde 0$, so $v= ts, u=s$ gives
$$
{\rm Bl}_{\tilde 0}{\rm Bl}_0 \mathcal I_\beta = {\rm Span}\{s^2\},
$$
from which we know that $F= 2\cdot |\{s=0\}|$.}

\subsection{Extremal property of the $\beta$-Seshadri constant} For $s$-invariant of a general monomial ideal 
$$
\mathcal I_P:= {\rm Span}\{z^{\alpha^1}, \cdots, z^{\alpha^k}\}, \ \ \alpha^j \in \mathbb Z_{>0}^n, \ \ z^{\alpha^j}:=z_1^{\alpha^j_1} \cdots z_n^{\alpha^j_n},
$$ 
with isolated zero set $\{x\}$, where $P$ is the Newton polytope defined by 
$$
P:= \text{convex hull of $\cup_{1\leq j \leq k} P_{\alpha^j}$}, \ \ P_{\alpha^j}:=\{x\in \mathbb R^n: x_l \geq \alpha^j_l, \ 1\leq l\leq n \}, 
$$
one may correspondingly define the $P$-Seshadri constant
\begin{align}\label{eq:P-se}
\epsilon_x(\omega;P):= \sup\{\gamma\geq 0: &  \ \  \text{there exists an  $\omega$-psh function $\psi$} \\
\nonumber & \text{on $X$ with $\psi=\gamma T_P$ near $x$}\},
\end{align}
where
$$
T_P:= \log \sum_{\alpha \in v(P)}|z^\alpha|^2, \ \ v(P) \ \text{denotes the set of vertices of $P$}.
$$
Then the proof of Theorem \ref{th:CEL} also implies
\begin{equation}\label{eq:CEL2}
\epsilon_x(\omega;P)=\frac1{s_{L}(\mathcal I_P)}.
\end{equation}
The reason why we only use $\beta$-Seshadri constants in this paper is that they have the following extremal property. 

\begin{theorem}\label{th:extremal} $\sup\{ \epsilon_x(\omega;P): \text{$(1,\cdots , 1)$ lies in the boundary of $P$}\}$ equals $\sup_{\sum \beta_j=1}\epsilon_x(\omega;\beta)$.
\end{theorem}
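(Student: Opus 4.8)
The plan is to prove the two inequalities separately; the content is concentrated in one of them. Throughout, write $\mathbf 1:=(1,\dots,1)$, and for $\beta$ with $\beta_j>0$, $\sum_j\beta_j=1$ set $P_\beta:=\{x\in\mathbb R^n_{\ge 0}:\sum_j\beta_jx_j\ge 1\}$; this is the Newton polytope with vertices $\beta_j^{-1}e_j$, so $T_{P_\beta}=\log\sum_j|z_j|^{2/\beta_j}=T_\beta$ and hence $\epsilon_x(\omega;P_\beta)=\epsilon_x(\omega;\beta)$. Since $\mathbf 1$ lies on the facet $\{\sum_j\beta_jx_j=1\}$ of $P_\beta$, the polytope $P_\beta$ is admissible for the left-hand supremum, which immediately gives $\sup_{\sum\beta_j=1}\epsilon_x(\omega;\beta)\le\sup\{\epsilon_x(\omega;P):\mathbf 1\in\partial P\}$. (Here $P$ is allowed to have non-integral vertices, $T_P$ being given by the same formula $T_P=\log\sum_{\alpha\in v(P)}|z^\alpha|^2$; this is the natural reading, since e.g. for $n=2$ the only $\beta$ with $\beta_j^{-1}\in\mathbb Z$ and $\sum\beta_j=1$ is $(\tfrac12,\tfrac12)$.)

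For the reverse inequality, fix a monomial ideal $\mathcal I_P$ with isolated zero $\{x\}$ and with $\mathbf 1\in\partial P$; it suffices to produce $\beta$ with $\beta_j>0$, $\sum_j\beta_j=1$ and $\epsilon_x(\omega;P)\le\epsilon_x(\omega;\beta)$. The key step is geometric. Because $\mathcal I_P$ has isolated zero, $P$ contains a point on every coordinate axis; in particular no facet of $P$ lies in a hyperplane $\{x_j=c\}$ with $c>0$, so the only facets of $P$ contained in a coordinate hyperplane are the $\{x_j=0\}\cap P$ — and $\mathbf 1$, all of whose coordinates are positive, lies on none of these. Hence every facet of $P$ through $\mathbf 1$ is a ``lower'' facet, i.e. has a strictly positive inner normal. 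Pick one such facet and scale its inner normal to $\beta$ with $\sum_j\beta_j=1$ (possible since all components are positive); then $\mathbf 1$ lies on the supporting hyperplane $\{\sum_j\beta_jx_j=1\}$, so $P\subseteq\{\sum_j\beta_jx_j\ge 1\}\cap\mathbb R^n_{\ge 0}=P_\beta$. It now remains to prove the monotonicity $P\subseteq Q\Rightarrow\epsilon_x(\omega;P)\le\epsilon_x(\omega;Q)$ and apply it with $Q=P_\beta$.

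For the monotonicity, first compare singularities: if $P\subseteq Q$ then each vertex $\alpha$ of $P$ can be written $\alpha=\sum_j\lambda_j\beta^j+\mu$ with $\beta^j$ vertices of $Q$, $\lambda_j\ge 0$, $\sum_j\lambda_j=1$, $\mu\in\mathbb R^n_{\ge 0}$, so near $x$ (where $|z_i|\le 1$) weighted AM--GM gives $|z^\alpha|\le\prod_j|z^{\beta^j}|^{\lambda_j}\le\big(\sum_j|z^{\beta^j}|^2\big)^{1/2}=e^{T_Q/2}$; summing over $\alpha\in v(P)$ yields $T_P\le T_Q+\log\#v(P)$ near $x$. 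Now take $\gamma<\epsilon_x(\omega;P)$ and an $\omega$-psh $\psi$ on $X$ with $\psi=\gamma T_P$ near $x$; then $\psi\le\gamma T_Q+C$ near $x$, so $\psi$ is at least as singular at $x$ as $\gamma T_Q$. A standard max-regularization — replace $\psi$ near $x$ by $\max(\psi,\ \gamma T_Q-A-\rho(|z|))$, where $A\gg 1$ and $\rho$ is a smooth convex increasing function chosen so that $dd^c\rho(|z|)\le\omega$ on a fixed small ball (making the second argument $\omega$-psh there) and so that $\gamma T_Q-A-\rho(|z|)$ drops below $\psi$ near the boundary of that ball; near $x$ the maximum is $\gamma T_Q-A$, and one discards the constant — produces an $\omega$-psh $\psi'$ on $X$ equal to $\gamma T_Q$ near $x$, whence $\gamma\le\epsilon_x(\omega;Q)$. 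This yields the monotonicity and the theorem. I expect the max-regularization to be the one delicate point: $T_P$ and $T_Q$ both tend to $-\infty$ along the coordinate subspaces, which meet every neighbourhood of $x$, so $\rho$ has to be balanced against the lower bound for $\omega$ with some care, and it is here that one uses the (standard) fact that $\epsilon_x(\omega;P)$ is unchanged if ``$\psi=\gamma T_P$ near $x$'' is relaxed to ``$\psi-\gamma T_P$ bounded near $x$''. When $\omega\in c_1(L)$ and all $\beta_j^{-1}\in\mathbb Z$ this step can be bypassed entirely: by \eqref{eq:CEL} and \eqref{eq:CEL2}, $\epsilon_x(\omega;P)=1/s_L(\mathcal I_P)$, and $\overline{\mathcal I_P}\subseteq\overline{\mathcal I_Q}$ forces $s_L(\mathcal I_P)\ge s_L(\mathcal I_Q)$, giving the monotonicity directly.
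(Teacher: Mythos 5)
Your proof is correct and follows the same route as the paper: a supporting hyperplane of $P$ at $(1,\dots,1)$ has strictly positive inner normal (this is where the isolated-zero hypothesis enters), so after normalization $P\subset P_\beta$ and one concludes from the monotonicity $\epsilon_x(\omega;P)\le\epsilon_x(\omega;P_\beta)$. The paper asserts that monotonicity without proof, whereas you supply it via $T_P\le T_Q+\log\#v(P)$ and a gluing; the one imprecise step --- the maximum need not literally equal $\gamma T_Q-A$ near $x$ when $P$ and $Q$ share vertices, only lie within a bounded distance of $\gamma T_Q$ --- is already covered by your closing remark that the prescribed singularity may be imposed up to a bounded term.
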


\begin{proof} The proof follows from a very simple fact: for an arbitrary Newton polytope $P$  such that $(1,\cdots, 1)$ lies in the boundary of $P$, one can always find $\beta\in \mathbb R_{>0}^n$ with $\sum \beta_j =1$ and
$$
P\subset P_\beta :=\{x\in \mathbb R_{\geq 0}^n: \beta_1 x_1+\cdots +\beta_n x_n \geq 1\},
$$
hence
$$
\epsilon_x(\omega;P) \leq \epsilon_x(\omega; P_\beta) =  \epsilon_x(\omega; \beta)
$$
gives the theorem.
\end{proof}

\noindent
\textbf{Remark.}  \emph{The condition that $(1,\cdots, 1)$ lies in the boundary of $P$ is equivalent to the following identity
$$
\sup\{c\geq 0: e^{-cT_P} \ \text{is integrable near $z=0$}\}=1,
$$
see \cite{Ho, Gu} for the proof and related results.}

\subsection{McDuff--Polterovich's theorem}  McDuff--Polterovich \cite{MP} proved that the Seshadri constant
\begin{align*}
\epsilon_x(\omega):= & \sup\{\gamma\geq 0:  \text{there exists an  $\omega$-psh function $\psi$}  \\ 
& \text{on $X$ with $\psi=\gamma \log|z|^2$ near $x$}\}
\end{align*}
 is always no bigger than the following \emph{Gromov width} of $(X, \omega)$ (see \cite[Theorem 1.1]{LMS} and \cite{EV})
$$
c_G(X,\omega):=\sup \{\pi r^2: B_r \xhookrightarrow{Symplectic} (X, \omega)  \}, \ \ \ B_r:=\{z\in \mathbb C^n: |z|<r\} ,
$$
where $B_r \xhookrightarrow{Symplectic} (X, \omega)$ means there exist a \emph{smooth} injection $
f: B_r \hookrightarrow   X
$
such that $
f^*(\omega)=\omega_{\rm euc}:=\frac i2 \sum_{j=1}^n dz_j \wedge d\bar z_j.$
In fact, the proof in \cite{MP} also gives the following stronger result:

\begin{theorem}\label{th:MP} Let $(X, \omega)$ be a compact K\"ahler manifold. Denote by $\mathcal K_\omega$ the space of K\"ahler metrics in the cohomology class $[\omega]$. 
Then the Seshadri constant $\epsilon_x(\omega)$ is equal to the following K\"ahler width
\begin{align*}\label{eq:TSes}
c_x(\omega):=\sup \left\lbrace\pi r^2: B_r \xhookrightarrow{hol_x} (X, \tilde{\omega}), \ \exists\ \text{$\tilde{\omega} \in \mathcal K_\omega$}  \right\rbrace,
\end{align*}
where
$$
B_r:=\left\lbrace z\in \mathbb C^n: \sum_{j=1}^n |z_j|^2 <r^2 \right\rbrace,
$$  
"$B_r \xhookrightarrow{hol_x} (X, \tilde{\omega})$" means that there exists an holomorphic injection $f: B_r \hookrightarrow  X$
such that $
f(0)=x$ and  
$f^*(\tilde\omega)=\omega_{\rm euc}$.
\end{theorem}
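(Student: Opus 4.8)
The plan is to establish both inequalities $\epsilon_x(\omega) \leq c_x(\omega)$ and $\epsilon_x(\omega) \geq c_x(\omega)$, following the strategy of McDuff--Polterovich but working throughout with K\"ahler (rather than merely symplectic) data, so that the holomorphic structure of the embedding is preserved. First I would prove $\epsilon_x(\omega) \geq c_x(\omega)$, which is the elementary direction. Given a holomorphic injection $f\colon B_r \hookrightarrow X$ with $f(0)=x$ and $f^*\tilde\omega = \omega_{\rm euc}$ for some $\tilde\omega\in\mathcal K_\omega$, one transports the potential of $\omega_{\rm euc}$ through $f$: on $f(B_{r'})$ for $r'<r$ the function $\pi|z|^2$ pulls back (via $f^{-1}$) to a local potential, and one patches it with a global potential of $\tilde\omega$ using a cut-off, incurring a loss that can be absorbed by shrinking slightly. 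This produces an $\tilde\omega$-psh function on $X$ equal to $\pi\,\lvert f^{-1}(\cdot)\rvert^2$ near $x$; replacing it by $\gamma\log$ of a local defining function and taking $\gamma$ close to $\pi r^2$ (while using $\tilde\omega\in[\omega]$, so $\tilde\omega$-psh differs from $\omega$-psh by a global smooth function) gives $\epsilon_x(\omega)\geq \pi r^2$. Taking the supremum over admissible $r$ yields the claim.

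The reverse inequality $\epsilon_x(\omega)\leq c_x(\omega)$ is the substantial direction and where the main obstacle lies. Starting from an $\omega$-psh function $\psi$ on $X$ with $\psi = \gamma\log|z|^2$ near $x$, the current $\omega + dd^c\psi \geq 0$ is a K\"ahler current with a logarithmic pole of coefficient $\gamma$ at $x$; the goal is to convert it into a genuine smooth K\"ahler metric $\tilde\omega\in\mathcal K_\omega$ together with a holomorphic embedding of $B_r$, $\pi r^2$ arbitrarily close to $\gamma$. The key steps are: (i) regularize the current away from $x$ while keeping the prescribed singularity at $x$ (a Demailly-type regularization, controlling the loss of positivity by an arbitrarily small multiple of $\omega$); (ii) perform a blow-up $\mu\colon \hat X\to X$ at $x$, so that the singularity becomes a divisorial one along the exceptional $\mathbb P^{n-1}$, and the class $\mu^*[\omega] - \gamma[E]$ contains a K\"ahler current; (iii) use a Calabi--Yau / Kodaira-type argument to produce an actual K\"ahler metric in a class $\mu^*[\omega]-\gamma'[E]$ for $\gamma' < \gamma$ close to $\gamma$, exploiting that this is an open condition; (iv) push forward and glue: near $x$ the metric on $\hat X\setminus E$ in such a class is, after a biholomorphic change of coordinates, standard on a ball, because a K\"ahler metric cohomologous to a multiple of the Fubini--Study-type form on the exceptional divisor's neighborhood can be normalized by a $\partial\bar\partial$-lemma argument to agree with $\frac{i}{2}\sum dz_j\wedge d\bar z_j$ on a ball of radius close to $\sqrt{\gamma'/\pi}$.

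I expect step (iv) — the normalization producing an honest holomorphic chart on which the K\"ahler metric is \emph{exactly} Euclidean on a round ball of the optimal size — to be the main obstacle, since one must control not only the cohomology class but the precise local shape of the metric; this is exactly the point where the holomorphic Darboux/Moser-type argument for K\"ahler forms enters and where McDuff--Polterovich's original symplectic construction must be upgraded. A clean way to organize this is to first prove the statement for $\omega \in c_1(L)$ with $L$ ample (so that one may use Theorem \ref{th:CEL} and the algebro-geometric description of $s_L(\mathcal I_\beta)$, and the blow-up $\mu$ has a genuine ample class $\mu^*L - \gamma' E$ for $\gamma'<\epsilon_x$, hence a positively curved metric by Kodaira), obtain the embedding there, and then pass to general transcendental $[\omega]$ by a density/approximation argument using Demailly's regularization of K\"ahler currents (the same mechanism underlying Proposition \ref{pr:hor2}), noting that both $\epsilon_x(\omega)$ and $c_x(\omega)$ are continuous in the appropriate sense under perturbing $[\omega]$ by small rational ample classes. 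The final bookkeeping — matching constants so that the radius $r$ one extracts satisfies $\pi r^2 \to \gamma$ as $\gamma \to \epsilon_x(\omega)$ — is routine once the normalization in step (iv) is in place.
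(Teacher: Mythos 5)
Your overall architecture (prove $\epsilon_x(\omega)\ge c_x(\omega)$ as the easy direction, $\epsilon_x(\omega)\le c_x(\omega)$ as the hard one) matches the paper, and your easy direction is recoverable: the one ingredient you gloss over is \emph{why} the log-pole coefficient comes out as exactly $\pi r^2$ rather than something smaller after the cut-off patching. The paper makes this precise with the explicit model function $\psi_r(z)=\pi r^2(\log(|z|^2/r^2)+1)$ on $B_r$ and $\psi_r(z)=\pi|z|^2$ outside, which is psh, has a $\pi r^2\log|z|^2$ pole at $0$, and satisfies $\psi_r\le\pi|z|^2$ with equality exactly on $\partial B_r$; hence $\psi_r-\pi|z|^2$ extends by zero across $\partial B_r$ to a global $\tilde\omega$-psh function, and a regularized-max gluing with $(\pi r^2-\varepsilon)\log|z|^2$ finishes. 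A generic ``cut-off with a loss absorbed by shrinking'' does not by itself produce the sharp constant; you need this comparison function.

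The hard direction is where your proposal has a genuine gap, and you have located it yourself: step (iv). Two concrete problems. First, the normalization you describe is not a $\partial\bar\partial$-lemma statement: two cohomologous K\"ahler forms differ by $dd^c$ of a function, but that does not produce a holomorphic chart in which a \emph{given} metric is exactly $\frac{i}{2}\sum dz_j\wedge d\bar z_j$ on a round ball of capacity $\gamma'$ --- local curvature and the global size of the ball both obstruct this. What one can do is build a \emph{new} metric in the class that is Euclidean on a prescribed embedded ball; that construction is the entire content of the theorem and is missing from your outline. Second, your fallback of first treating $\omega\in c_1(L)$ with $L$ ample and then approximating by rational classes cannot cover the case the paper actually needs: for $X=\mathbb C^n/\Gamma$ with $\Gamma$ transcendental there are no nontrivial line bundles at all, so there is nothing to approximate by. The paper avoids all of this (no blow-up, no Calabi--Yau step, no regularization of currents) by an elementary scaling argument: starting from an $\omega$-psh $\psi$ with pole coefficient $>\pi$, it glues, via a regularized max and the logarithmic homogeneity $\psi_1(\delta w)=\psi_1(w)+\pi\log\delta^2$ of the model function under the dilations $f_\delta(w)=\delta w$, the flat potential $\pi|w|^2+2\pi\log\delta$ on the small ball $f_\delta(B_1)$ to the ambient current outside; the resulting form is a smooth K\"ahler form in $\mathcal K_\omega$ whose pullback under $f_\delta$ is exactly $dd^c(\pi|w|^2)$ on $B_\gamma$ for $\gamma$ arbitrarily close to $1$. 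I would recommend abandoning the blow-up route and adopting this direct construction; it is also the form of the argument that generalizes to the $\beta$-ellipsoids via the Legendre-transform analogue of $\psi_r$ (Lemma 3.12 of the paper).
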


\begin{proof}  The proof here is different from McDuff--Polterovich's approach in \cite{MP}. Our main idea is to use the following plurisubharmonic function on $\mathbb C^n$
\begin{equation}\label{eq:psi-r}
\psi_r(z):= 
\begin{cases}
\pi r^2 \left( \log \frac{|z|^2}{r^2} +1\right) & |z|<r \\ 
\pi |z|^2 & |z| \geq r,
\end{cases}
\end{equation}
which satisfies
\begin{itemize}
\item[(L1)] $\psi_r(z)-\pi r^2 \log|z|^2$ is bounded near $0$;
\item[(L2)] $\psi_r(z) \leq \pi |z|^2$ on $\mathbb C^n$ and $\{z\in \mathbb C^n:\psi_r(z)<\pi|z|^2\}=B_r$;
\item[(L3)] For every $w\in B_r$ and $0<\delta<1$ we have
$$
\psi_r(f_\delta(w))= \psi_r(w) + \pi r^2 \log(\delta^2),
$$
where $
f_\delta(w):=(\delta w_1, \cdots, \delta w_n).
$
\end{itemize}
We shall use (L1)-(L3) to prove (P1), (P2) below which imply the theorem: 
\begin{itemize}
\item[(P1)] If $c_x(\omega) > \pi r^2$ then $\epsilon_x(\omega) \geq \pi r^2$;
\item[(P2)] If $\epsilon_x(\omega) > \pi r^2$ then $c_x(\omega) \geq \pi r^2$.
\end{itemize}

\medskip
\noindent
\textbf{Proof of (P1)}:  If $c_x(\omega) > \pi r^2 $ then then one may think of $B_r$ as a K\"ahler subset of $X$. Let us define
$\tilde \psi$ such that $\tilde \psi=\psi_r-\pi |z|^2$ on $B_r$ and $\tilde\psi=0$ outside $B_r$ in $X$. Then (L2) implies that $\tilde \psi$ is $\omega$-psh (note that $\omega_{\rm euc}=dd^c(\pi|z|^2)$). Fix a small $\varepsilon>0$ (assume that $\varepsilon<\pi r^2$), then by (L1) one may take a sufficiently big $C>0$ such that
\begin{equation}\label{eq:glue}
\psi(z):=\max\{\tilde\psi(z)+C, (\pi r^2-\varepsilon)\log |z|^2\}
\end{equation}
is equal to $\tilde\psi+C$ near the boundary of $B_r$. Hence $\psi$ extends to an $\omega$-psh function on $X$. Notice that $\psi= (\pi r^2-\varepsilon)\log |z|^2 $ near $z(x)=0$, hence $\epsilon_x(\omega) \geq \pi r^2-\varepsilon$  for all $\varepsilon>0$ and (P1) holds.

\medskip
\noindent
\textbf{Proof of (P2)}: To simplify the notation we take $r=1$. If  $\epsilon_x(\omega) > \pi$ then one may use  the construction in \eqref{eq:glue} to produce an $\omega$-psh function $\psi$ smooth on $X\setminus\{x\}$ such that 
$$
(*) \qquad \text{$\omega+dd^c\psi=dd^c (\psi_1+|z|^2)$ near $z(x)=0$}, 
$$
and $\omega+dd^c\psi >0$ on $X$, where $\psi_1$ in defined in \eqref{eq:psi-r}. For $w\in B_1$ and $0<\delta<1$ let us define
$$
\phi(f_\delta(w)):= {\rm Max} \left\lbrace  \psi_1(f_\delta(w)) +|f_\delta(w)|^2,  \pi (|w|^2+2\log \delta)\right\rbrace,
$$
where ${\rm Max} $ denotes a regularized max function. By (L3) and (L2)
$$
 \psi_1(f_\delta(w) ) = \psi_1(w ) +\pi\log (\delta^2) \leq\pi( |w|^2+2\log \delta)
$$
for $w\in \overline{B_1}$, with identity holds if and only if $w\in \partial B_1$; together with $(*)$, we know that for every $0<\gamma<1$, one may take a small $\delta$ such that $dd^c \phi$ extends to a K\"ahler form $\tilde\omega \in \mathcal K_\omega$ with 
$$
f_\delta^*(\tilde\omega)=f_\delta^* dd^c \phi = dd^c(\phi (f_\delta(w)))=dd^c(\pi |w|^2) \  \ \text{on} \ B_{\gamma}.
$$ 
Letting $\gamma \to 1$ we finally get $c_x(\omega)\geq \pi$. The proof is complete.
\end{proof}

In order to generalize the above proof to general $\beta$-Seshadri constants, we need to construct the associated $\beta$-version of $\psi_r$ in \eqref{eq:psi-r} (see Lemma \ref{le:envelope} below). In the next subsection, we shall use the Legendre transform theory to decode the construction.

\subsection{Iterated Legendre transform and proof of Theorem A} The main ingredient in our proof of Theorem A is the theory of iterated Legendre transform.
\begin{definition} Let $\phi$ be a  smooth convex function on $\mathbb R^n$. We call
$$
\phi^*(\alpha):=\sup_{y\in \mathbb R^n} \alpha\cdot y-\phi(y)
$$
the Legendre transform of $\phi$. Let $A\subset \mathbb R^n$ be a closed set. We call
$$
\phi_A(x):=\sup_{\alpha \in A} \alpha\cdot  x- \phi^*(\alpha) 
$$
the iterated Legendre transform of $\phi$ with respect to $A$.
\end{definition}

\noindent
\textbf{Remark.} \emph{Notice that 
$$
\phi(x)+ \phi^*(\alpha) \geq \alpha\cdot x,
$$
hence we always have
$$
\phi_A \leq \phi.
$$}

\begin{lemma} Let $\phi$ be a smooth convex function on $\mathbb R^n$. Then
$$
\phi(x)+ \phi^*(\alpha)= \alpha\cdot x
$$
if and only if $\alpha=\nabla \phi (x)$, where
$$
\nabla \phi (x):=\left(\frac{\partial \phi}{\partial x_1}(x), \cdots, \frac{\partial \phi}{\partial x_n}(x) \right).
$$
\end{lemma}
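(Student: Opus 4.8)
The plan is to recognize this as the equality case in the Fenchel (Young) inequality. The very definition $\phi^*(\alpha)=\sup_{y\in\mathbb R^n}\alpha\cdot y-\phi(y)$ gives, for all $x,\alpha\in\mathbb R^n$, the inequality $\phi(x)+\phi^*(\alpha)\ge\alpha\cdot x$, with equality if and only if $x$ attains the supremum defining $\phi^*(\alpha)$, i.e. if and only if $x$ is a global maximizer of the smooth concave function $g_\alpha(y):=\alpha\cdot y-\phi(y)$. So the lemma reduces to showing that $x$ maximizes $g_\alpha$ precisely when $\alpha=\nabla\phi(x)$.

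For the direction ``$\alpha=\nabla\phi(x)\Rightarrow$ equality'': I would invoke convexity of $\phi$ in the form of the supporting-hyperplane inequality $\phi(y)\ge\phi(x)+\nabla\phi(x)\cdot(y-x)$ valid for all $y$. With $\alpha=\nabla\phi(x)$ this rearranges to $\alpha\cdot y-\phi(y)\le\alpha\cdot x-\phi(x)$ for every $y$, so $x$ maximizes $g_\alpha$ and hence $\phi^*(\alpha)=\alpha\cdot x-\phi(x)$, which is the claimed identity.

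For the converse ``equality $\Rightarrow\alpha=\nabla\phi(x)$'': the equality hypothesis says exactly that $x$ is a global maximizer of $g_\alpha$; since $g_\alpha$ is differentiable and the domain is all of $\mathbb R^n$, this is an interior maximum, so the first-order condition $\nabla g_\alpha(x)=0$ holds, i.e. $\alpha-\nabla\phi(x)=0$. There is no real obstacle here: the only points to be a little careful about are that convexity (not just smoothness) is what supplies the supporting-hyperplane estimate used in the first implication, and that smoothness (not just convexity) is what lets us pass to the vanishing-gradient condition in the second; both hypotheses of the lemma are used, and nothing else is needed.
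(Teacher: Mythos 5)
Your proof is correct and is essentially the paper's argument: the paper also reduces the equality case of the Fenchel--Young inequality to the statement that $x$ maximizes the concave function $y\mapsto\alpha\cdot y-\phi(y)$ if and only if it is a critical point of it. You merely spell out the two implications (supporting hyperplane for one direction, vanishing gradient at an interior maximum for the other) that the paper compresses into a single sentence.
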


\begin{proof} It follows from the fact that $x$ is the maximum point of the following concave function
$$
\psi^\alpha: y \mapsto \alpha\cdot y-\phi(y)
$$
if and only if $x$ is a critical point of $\psi^\alpha$.
\end{proof}

\begin{proposition}\label{pr:legendre} If $\phi$ is smooth strictly convex and  $A \subset \mathbb R^n$ is closed then
$$
\{x\in \mathbb R^n: \phi(x)=\phi_A (x)\} = \{x\in \mathbb R^n: \nabla \phi(x) \in A\}.
$$
\end{proposition}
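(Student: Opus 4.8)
The plan is to prove the two inclusions separately, in both cases exploiting the preceding lemma characterizing equality in the Young--Fenchel inequality as $\alpha = \nabla\phi(x)$, together with strict convexity of $\phi$.

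\emph{The inclusion $\{\nabla\phi(x)\in A\}\subseteq\{\phi(x)=\phi_A(x)\}$.} Suppose $\alpha_0:=\nabla\phi(x)\in A$. By the lemma, $\phi(x)+\phi^*(\alpha_0)=\alpha_0\cdot x$, i.e. $\alpha_0\cdot x-\phi^*(\alpha_0)=\phi(x)$. Since $\alpha_0\in A$, this particular value is one of the terms in the supremum defining $\phi_A(x)$, so $\phi_A(x)\ge\phi(x)$. Combined with the general inequality $\phi_A\le\phi$ recorded in the remark above, we get $\phi_A(x)=\phi(x)$.

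\emph{The inclusion $\{\phi(x)=\phi_A(x)\}\subseteq\{\nabla\phi(x)\in A\}$.} Assume $\phi_A(x)=\phi(x)$. Because $A$ is closed and (after the standard normalizations on $\phi$, which make the map $\alpha\mapsto \alpha\cdot x-\phi^*(\alpha)$ upper semicontinuous and coercive on the relevant set) the supremum defining $\phi_A(x)$ is attained, pick $\alpha_1\in A$ with $\alpha_1\cdot x-\phi^*(\alpha_1)=\phi_A(x)=\phi(x)$. Then $\phi(x)+\phi^*(\alpha_1)=\alpha_1\cdot x$, so the lemma gives $\alpha_1=\nabla\phi(x)$, hence $\nabla\phi(x)=\alpha_1\in A$. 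Here strict convexity is what guarantees that the critical point characterization in the lemma produces a unique $\alpha$, so there is no ambiguity; it also guarantees $\phi^*$ is finite and differentiable where needed, so the attainment argument goes through.

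\emph{Main obstacle.} The only genuinely delicate point is the attainment of the supremum in the definition of $\phi_A(x)$ when $A$ is merely closed (and possibly unbounded): one must ensure the function $\alpha\mapsto \alpha\cdot x-\phi^*(\alpha)$ is upper semicontinuous and tends to $-\infty$ as $|\alpha|\to\infty$ along $A$, which follows from $\phi$ being smooth and strictly convex with superlinear growth — precisely the situation in the paper's application, where $\phi$ is a sum of one-variable strictly convex functions. If one does not wish to impose growth, the cleaner route is to observe that $\phi_A$ is the supremum of affine functions dominated by $\phi$ and argue via the subdifferential: $\phi_A(x)=\phi(x)$ forces $\partial\phi(x)\cap \overline{A}\neq\emptyset$, and since $\phi$ is smooth this subdifferential is the singleton $\{\nabla\phi(x)\}$, giving $\nabla\phi(x)\in A$ directly. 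Either way the argument is short; I expect the write-up to be essentially the two displays above plus one sentence justifying attainment.
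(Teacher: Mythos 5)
Your proposal is correct and follows the same skeleton as the paper: both directions reduce to the preceding lemma (equality in $\alpha\cdot x\le\phi(x)+\phi^*(\alpha)$ iff $\alpha=\nabla\phi(x)$), and the inclusion $\{\nabla\phi(x)\in A\}\subseteq\{\phi(x)=\phi_A(x)\}$ is handled identically via $\phi_A\le\phi$. The two arguments diverge only in how the reverse inclusion rules out the supremum over $A$ reaching $\phi(x)$ without being attained at $\nabla\phi(x)$: the paper shows that $\rho^x:\alpha\mapsto\alpha\cdot x-\phi^*(\alpha)$ is strictly concave on $\nabla\phi(\mathbb R^n)$ (citing Wang's Proposition 2.2), so its supremum over the complement of any small ball around the unique maximizer is strictly below $\phi(x)$, and closedness of $A$ finishes; you instead argue that the supremum over $A$ is attained and then apply the lemma to the maximizer. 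Your route works, and the one point you flag as delicate is in fact automatic: no superlinear growth or normalization is needed, since $\phi^*(\alpha)\ge\alpha\cdot y-\phi(y)$ with $y=x+\alpha/|\alpha|$ gives $\rho^x(\alpha)\le-|\alpha|+\max_{|u|=1}\phi(x+u)$, so $\rho^x$ is coercive, and it is upper semicontinuous because $\phi^*$ is a supremum of affine functions; together with closedness of $A$ this yields attainment whenever $\phi_A(x)>-\infty$. (Your fallback subdifferential argument is the one to be careful with: generic subdifferential calculus only places $\nabla\phi(x)$ in the closed \emph{convex hull} of the active slopes, which suffices for the half-space $A$ arising in the application but not for arbitrary closed $A$; the attainment argument is the cleaner general route.)
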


\begin{proof} By the above lemma, we have
$$
\alpha\cdot x -\phi^*(\alpha) \leq \phi(x)
$$
with identity holds if and only if $\alpha=\nabla \phi(x)$. Hence  $\nabla \phi(x)$ is the unique maximum point of the following function
$$
\rho^x: \alpha\mapsto \alpha\cdot x -\phi^*(\alpha).
$$
The proof of Proposition 2.2 in \cite{Wang-AF} implies that $\rho^x$ is smooth strictly concave on $\nabla \phi(\mathbb R^n)$. Hence the supremum of $\rho^x$ on the complement of any small ball around $\nabla \phi(x)$ must be strictly smaller than $\phi(x)$. By our assumption, $A$ is closed, thus $\phi(x)=\phi_A (x)$ if and only if $\nabla \phi(x) \in A$.
\end{proof}

\begin{definition} Let $\phi$ be a smooth strictly convex function on $\mathbb R^n$ and $A$ be a closed set in $\mathbb R^n$. We call
$$
\Omega_A(\phi):=\{x\in \mathbb R^n: \phi_A (x)<\phi(x)\} 
$$
the Hele--Shaw domain of $(\phi, A)$.
\end{definition}

The above proposition implies that
\begin{equation}\label{eq:hsd}
\Omega_A(\phi)= (\nabla \phi)^{-1} (\mathbb R^n\setminus A).
\end{equation}
Our key observation is the following:

\begin{theorem}\label{le:key} Assume that  $\phi$ is smooth strictly convex and  $A \subset \mathbb R^n$ is closed.  If $x\in \Omega_A(\phi)
$ then 
\begin{equation}\label{eq:key}
\phi_A(x)=\sup_{\alpha \in \partial A} \alpha\cdot x-\phi^*(\alpha),
\end{equation}
where $\partial A$ denotes the boundary of $A$.
\end{theorem}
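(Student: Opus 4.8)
The plan is to exploit the geometric meaning of the iterated Legendre transform as a supremum over affine functions, and to reduce the supremum over all of $A$ to one over $\partial A$ by showing that interior points of $A$ cannot be the (near-)maximizer when $x\in\Omega_A(\phi)$. First I would recall from the preceding lemma and Proposition \ref{pr:legendre} that for fixed $x$ the function $\rho^x(\alpha):=\alpha\cdot x-\phi^*(\alpha)$ is smooth and strictly concave on $\nabla\phi(\mathbb R^n)$, and that its unique maximum over $\nabla\phi(\mathbb R^n)$ is attained at $\alpha_0:=\nabla\phi(x)$, with value $\phi(x)$. By \eqref{eq:hsd}, the hypothesis $x\in\Omega_A(\phi)$ says precisely that $\alpha_0=\nabla\phi(x)\notin A$, i.e. $\alpha_0$ lies in the complement of $A$.

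The key step is then the following: since $A$ is closed, its complement is open, so a whole neighborhood $U$ of $\alpha_0$ is disjoint from $A$. For any $\alpha$ in the interior of $A$, I claim $\rho^x(\alpha)<\sup_{\alpha'\in\partial A}\rho^x(\alpha')$. Indeed, consider the line segment from $\alpha$ towards $\alpha_0$; because $\alpha_0\notin A$ and $\alpha\in A$, this segment must cross $\partial A$ at some point $\bar\alpha$, and $\bar\alpha=(1-t)\alpha+t\alpha_0$ for some $t\in(0,1)$. By strict concavity of $\rho^x$ along this segment and the fact that $\rho^x(\alpha_0)\ge\rho^x(\alpha)$ (as $\alpha_0$ is the global max), one gets $\rho^x(\bar\alpha)>\rho^x(\alpha)$. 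Hence the supremum of $\rho^x$ over $A$ is not improved by interior points, so
$$
\phi_A(x)=\sup_{\alpha\in A}\rho^x(\alpha)=\sup_{\alpha\in\partial A}\rho^x(\alpha),
$$
which is exactly \eqref{eq:key}. (One should also note that $\rho^x$ is bounded above on $A$ — it is bounded by $\phi(x)$ on $\nabla\phi(\mathbb R^n)$, and outside that set $\phi^*=+\infty$ so those $\alpha$ contribute $-\infty$ — so the suprema are genuine and the crossing-point argument applies verbatim.)

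I expect the main technical obstacle to be the careful handling of the domain of $\rho^x$: the strict concavity furnished by Proposition \ref{pr:legendre} (via \cite{Wang-AF}) holds only on $\nabla\phi(\mathbb R^n)$, which need not be all of $\mathbb R^n$, while $A$ and $\partial A$ may a priori stick out of this set. The resolution is that on the complement of $\nabla\phi(\mathbb R^n)$ we have $\phi^*\equiv+\infty$ (or at least $\rho^x=-\infty$ there), so such $\alpha$ are irrelevant to both suprema; and the segment from $\alpha\in A\cap\nabla\phi(\mathbb R^n)$ to $\alpha_0\in\nabla\phi(\mathbb R^n)$ stays inside the convex set $\nabla\phi(\mathbb R^n)$, so the crossing point $\bar\alpha$ lies in $\partial A\cap\nabla\phi(\mathbb R^n)$ where concavity is available. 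A minor additional point is to confirm that the crossing point of the segment with $A$ is indeed a boundary point of $A$ (it is the last point of the segment lying in the closed set $A$ as we travel from $\alpha_0$, hence a limit of points not in $A$), which is immediate from closedness of $A$.
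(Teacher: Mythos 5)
Your proof is correct and follows essentially the same route as the paper: both arguments use that $\rho^x(\alpha)=\alpha\cdot x-\phi^*(\alpha)$ is concave with its unique maximum at $\nabla\phi(x)\notin A$, and then compare the value of $\rho^x$ at a point of $A$ with its value at the point where the segment toward $\nabla\phi(x)$ crosses $\partial A$. The paper phrases this as monotonicity of $t\mapsto\rho^x(t\nabla\phi(x)+(1-t)a)$ on $[0,1]$, while you invoke the concavity inequality directly; these are the same observation, and your extra care about the domain of finiteness of $\phi^*$ is a harmless refinement.
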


\begin{proof} Since $\nabla \phi(x)$ is the unique maximum point of the following concave function
$$
\rho^x: \alpha\mapsto \alpha\cdot x -\phi^*(\alpha)
$$
and $\nabla \phi(x) \notin A$, we know that for every point $a\in A$, 
$$
f(t):= \rho^x (t\nabla \phi(x) +(1-t)a )
$$
is increasing on $t\in [0,1]$. Take $\hat t \in [0,1]$ such that 
$$
\hat t\nabla \phi(x) +(1-\hat t)a \in \partial A,
$$ 
we know that
$$
\rho^x(a) =f(0)  \leq f(\hat t) \leq  \sup_{\alpha \in \partial A} \rho^x(\alpha).
$$
Hence the theorem follows.
\end{proof}

\begin{lemma}\label{le:envelope} With the notation in Theorem A, for each $r>0$ there exists $\psi_r \in {\rm psh}(\mathbb C^n)$ such that
\begin{itemize}
\item[(L1)] $\psi_r-\pi r^2 T_\beta$ is bounded near $0$;
\item[(L2)] $\psi_r(z) \leq \pi |z|^2$ on $\mathbb C^n$ and $\{z\in \mathbb C^n: \psi_r(z)<\pi|z|^2\}=B_r^\beta$;
\item[(L3)] For every $w\in B_r^\beta$ and $0<\delta<1$ we have
$$
\psi_r(f_\delta(w))= \psi_r(w) + \pi r^2 \log(\delta^2),
$$
where $
f_\delta(w):=(\delta^{\beta_1}w_1, \cdots, \delta^{\beta_n}w_n).
$
\end{itemize}
\end{lemma}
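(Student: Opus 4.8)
The plan is to produce $\psi_r$ as a Reinhardt plurisubharmonic function arising from an iterated Legendre transform, so that (L1)--(L3) become elementary facts about explicit convex functions on $\mathbb R^n$. Let
\[
\phi(t):=\pi\sum_{j=1}^n e^{t_j}\qquad(t\in\mathbb R^n),
\]
which is smooth and strictly convex, with $\nabla\phi(t)=(\pi e^{t_1},\dots,\pi e^{t_n})$ and $\phi^*(\alpha)=\sum_{\alpha_j>0}\bigl(\alpha_j\log(\alpha_j/\pi)-\alpha_j\bigr)$ finite on $\mathbb R^n_{\ge0}$ (with $\phi^*\ge-n\pi$ there) and $+\infty$ elsewhere. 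Put
\[
A:=\Bigl\{\alpha\in\mathbb R^n:\ \alpha_j\ge0\ (1\le j\le n),\ \sum_{j=1}^n\beta_j\alpha_j\ge\pi r^2\Bigr\},
\]
a closed convex set whose ``bottom face'' $S:=\{\alpha\in A:\sum_j\beta_j\alpha_j=\pi r^2\}$ is a simplex with vertices $v^{(k)}:=(\pi r^2/\beta_k)\,e_k$ ($e_k$ the $k$-th standard basis vector), and define
\[
\psi_r(z):=\phi_A\bigl(\log|z_1|^2,\dots,\log|z_n|^2\bigr).
\]
Since $A\subseteq\mathbb R^n_{\ge0}$ and $\phi^*$ is finite on $A$, $\phi_A=\sup_{\alpha\in A}(\alpha\cdot t-\phi^*(\alpha))$ is a supremum of finite affine functions with nonnegative slopes; hence $\phi_A$ is finite ($\phi_A\le\phi$, and $\phi_A\ge v^{(1)}\!\cdot t-\phi^*(v^{(1)})$), convex, and nondecreasing in each variable, so $\psi_r$ (a priori defined on $(\mathbb C^\ast)^n$) extends across the coordinate hyperplanes to a plurisubharmonic function on $\mathbb C^n$, by the standard property of monotone convex compositions with the $\log|z_j|^2$.

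Property (L2) is then almost free: $\psi_r\le\pi|z|^2$ is the inequality $\phi_A\le\phi$, and by Proposition~\ref{pr:legendre} together with \eqref{eq:hsd} the set $\{\psi_r<\pi|z|^2\}$ is the pullback of $\Omega_A(\phi)=(\nabla\phi)^{-1}(\mathbb R^n\setminus A)$; since $\nabla\phi(t)=(\pi|z_1|^2,\dots,\pi|z_n|^2)$ always has nonnegative entries, the only defining inequality of $A$ that can fail is $\sum_j\beta_j\alpha_j\ge\pi r^2$, so this set equals $\{z:\sum_j\beta_j|z_j|^2<r^2\}=B_r^\beta$. The real content is a \emph{localization}: for $t\in\Omega_A(\phi)$ the supremum defining $\phi_A(t)$ is attained in the relative interior of $S$, so that $\phi_A(t)=\sup_{\alpha\in S}\bigl(\alpha\cdot t-\phi^*(\alpha)\bigr)$. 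Theorem~\ref{le:key} already confines the maximizer to $\partial A$; the extra input is that the $k$-th partial of $\alpha\mapsto\alpha\cdot t-\phi^*(\alpha)$ equals $t_k-\log(\alpha_k/\pi)\to+\infty$ as $\alpha_k\to0^+$, so the function is strictly increasing near each coordinate face $\{\alpha_k=0\}$ and its maximum over $A$ cannot lie there; together with strict concavity this forces the (unique) maximizer onto $S$. I expect this to be the main obstacle: because $A$ is unbounded one must first argue that the maximizer exists at all, which follows from strict concavity and the superlinear growth of the $\alpha_k\log\alpha_k$ terms dominating $\alpha_k t_k$; the exclusion of the coordinate faces is then the blow-up-of-partials argument just described.

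Granting the localization, (L1) and (L3) drop out. For (L3): given $w\in B_r^\beta$ and $0<\delta<1$, one has $t(f_\delta(w))=t(w)+2(\log\delta)\beta$ with $t_j:=\log|z_j|^2$, and $t(f_\delta(w))$ again lies in $\Omega_A(\phi)$ because replacing $|z_j|^2$ by $\delta^{2\beta_j}|z_j|^2$ only shrinks $\sum_j\beta_j|z_j|^2$; since $\sum_j\beta_j\alpha_j\equiv\pi r^2$ on $S$, the representation $\phi_A=\sup_{\alpha\in S}(\alpha\cdot t-\phi^*(\alpha))$ valid on $\Omega_A(\phi)$ gives $\psi_r(f_\delta(w))=\psi_r(w)+2\pi r^2\log\delta=\psi_r(w)+\pi r^2\log(\delta^2)$. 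For (L1): on the polydisc $P:=\{|z_j|^2<r^2/n,\ 1\le j\le n\}\subset\Omega_A(\phi)$ the same representation yields
\[
\psi_r(z)=\log\sup_{\alpha\in S}\Bigl(\prod_{j=1}^n|z_j|^{2\alpha_j}\,e^{-\phi^*(\alpha)}\Bigr),
\]
and since $\alpha\mapsto\sum_j\alpha_j\log|z_j|^2$ is linear on the simplex $S$ its extreme values occur at the vertices $v^{(k)}$, so $\sup_{\alpha\in S}\prod_j|z_j|^{2\alpha_j}=\max_k|z_k|^{2\pi r^2/\beta_k}$; combining this with the elementary bounds
\[
\max_k|z_k|^{2\pi r^2/\beta_k}\ \le\ \Bigl(\sum_{j=1}^n|z_j|^{2/\beta_j}\Bigr)^{\pi r^2}\ \le\ n^{\pi r^2}\max_k|z_k|^{2\pi r^2/\beta_k}
\]
and the two-sided bound $-n\pi\le\phi^*\le C_0$ on the compact set $S$, we obtain $\pi r^2\,T_\beta(z)-C\le\psi_r(z)\le\pi r^2\,T_\beta(z)+C$ on $P$ for a constant $C=C(n,r,\beta)$, which is (L1); the bound extends to the coordinate hyperplanes through $0$ since both $\psi_r$ and $T_\beta$ are plurisubharmonic.
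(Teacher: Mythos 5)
Your construction is essentially the paper's own: the same iterated Legendre transform $\psi_r=\phi_A(\log|z_1|^2,\dots,\log|z_n|^2)$ of $\phi(t)=\pi\sum_j e^{t_j}$, with (L2) obtained from Proposition \ref{pr:legendre} and \eqref{eq:hsd}, (L3) from the boundary localization of Theorem \ref{le:key} together with $\alpha\cdot\beta\equiv\pi r^2$ on the bottom face, and (L1) from explicit two-sided bounds on $\phi^*$ over the simplex with vertices $(\pi r^2/\beta_k)e_k$. The only cosmetic difference is that you build the orthant constraints into $A$ (the paper uses the bare half-space $\{\alpha\cdot\beta\ge\pi r^2\}$ and lets $\phi^*=+\infty$ enforce nonnegativity), which obliges you to rule out the coordinate faces of $\partial A$ as maximizers --- a step you carry out correctly via the blow-up of $\partial_k(\alpha\cdot t-\phi^*(\alpha))$ as $\alpha_k\to0^+$ --- and you supply a few details (existence of the maximizer on the unbounded set $A$, the plurisubharmonic extension across the coordinate hyperplanes) that the paper leaves implicit.
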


\begin{proof} Let us consider
$$
\psi_r(z):=\phi_A(\log|z_1|^2, \cdots, \log|z_n|^2), 
$$
where
$$
\phi(x):=\pi (e^{x_1}+\cdots+e^{x_n}), \ \ A:=\{\alpha \in \mathbb R^n: \alpha\cdot \beta \geq \pi r^2\}.
$$
Then we have
$$
\nabla \phi(x) =\pi(e^{x_1}, \cdots, e^{x_n})
$$
and
$$
\phi^*(\alpha)=
\begin{cases}
\sum_{j=1}^n \left(\alpha_j \log \frac{\alpha_j}{\pi}-\alpha_j \right)  &  \ \forall \ \alpha_j \geq 0 \\
\infty &  \ \exists\ \alpha_j < 0,
\end{cases}
$$
where  $0\log 0:=0$. Hence
$$
\phi_A(x)=\sup_{\alpha\in A_+} \alpha\cdot x-\sum_{j=1}^n \left(\alpha_j \log \frac{\alpha_j}{\pi}-\alpha_j \right),
$$ 
where
$$
A_+:=\{\alpha \in [0, \infty)^n: \alpha\cdot \beta \geq \pi r^2\}.
$$

\medskip
\noindent
\textbf{Proof of (L1)}: Notice that $\psi_r-\pi r^2 T_\beta$ is  bounded near $0$ if and only if $\phi_A(x)-\sup_{\alpha \in A_+} \alpha\cdot x$ is bounded on $(-\infty, 0]^n$. The above formula for $\phi_A$ implies that
$$
\inf_{1\leq j\leq n} \left(\frac{\pi r^2}{\beta_j}-\frac{\pi r^2}{\beta_j} \log \frac{ r^2}{\beta_j}\right) \leq \phi_A(x)-\sup_{\alpha \in A_+} \alpha\cdot x \leq  n\pi
$$
for all $x\in (-\infty, 0]^n$. Hence (L1) follows.

\medskip
\noindent
\textbf{Proof of (L2)}: Follows directly from \eqref{eq:hsd}.

\medskip
\noindent
\textbf{Proof of (L3)}: Notice that \eqref{eq:key} implies
$$
\phi_A(x+(\log (\delta^2))\beta)=\phi_A (x)+ \pi r^2 \log(\delta^2), \ \ \ \forall \ x\in \Omega_A(\phi),
$$
from which (L3) follows.
\end{proof}

\begin{proof}[Proof of Theorem A] Similar as the proof of Theorem \ref{th:MP} (replace $B_r$ by $B_r^\beta$), we know that the lemma above gives Theorem A. 
\end{proof}

\subsection{A partial converse of the H\"ormander criterion} 

For general higher dimensional cases,  we do not know whether the H\"ormander criterion is an equivalent criterion or not. Based on Demailly--P\u{a}un's generalized  Nakai--Moishezon ampleness criterion \cite{DP}, a theorem of Nakamaye \cite{Nak, Oht}, Lindholm's result \cite{Lindholm} and the  
\emph{Balian-Low type theorem} (see \cite[Theorem 10]{H07} and \cite[Theorem 1.5]{AFK}), we obtain the following partial converse of the H\"ormander criterion.

\begin{theorem}\label{th:converse}
Let $\Gamma$ be a lattice in $\mathbb C^n$. Denote by $\iota_\Gamma$ the H\"ormander constant  of  $(\mathbb C^n/\Gamma, \omega_{\rm euc})$.
\begin{itemize}
\item[(1)] If  $\Gamma$ is a set of interpolation for $\mathcal F^2$ and all irreducible analytic subvarieties of $X$ are translates of complex tori then $\iota_\Gamma>1/n$;
\item[(2)] If  $\Gamma$ is a set of interpolation for $\mathcal F^2$ and the only positive dimensional irreducible analytic subvariety of $X$ is $X$ itself then 
$$
\iota_\Gamma=\frac{(n!\,|\Gamma|)^{1/n}
}{n} > \frac{(n!)^{1/n}
}{n};
$$
\item[(3)] Assume that $\omega_{\rm euc}$ is rational on $\Gamma$ or the Picard number of $X$ is $n^2$. If  $\Gamma$ is a set of interpolation for $\mathcal F^2$ then  $\iota_\Gamma>1/(n\,e)$.
\end{itemize}
\end{theorem}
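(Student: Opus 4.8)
The plan is to deduce all three parts from lower bounds on a Seshadri constant. The key reduction: taking $\beta_0:=(1/n,\dots,1/n)$ in \eqref{eq:hor1} and using \eqref{eq:D-beta} together with the remark after Proposition \ref{pr:hor2},
\[
\iota_\Gamma\ \geq\ \epsilon_x(\omega_{\rm euc};\beta_0)\ =\ \tfrac1n\,\epsilon_x(\omega_{\rm euc}),
\]
where $\epsilon_x(\omega_{\rm euc})$ is the generalized Seshadri constant, described by Tosatti's subvariety formula $\epsilon_x(\omega_{\rm euc})=\inf_{V\ni x}\bigl(\tfrac{\int_V\omega_{\rm euc}^{\dim V}}{{\rm mult}_x V}\bigr)^{1/\dim V}$, the infimum over positive-dimensional irreducible analytic subvarieties $V\subset X:=\mathbb C^n/\Gamma$. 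I would use the interpolation hypothesis only through two facts. First, by Proposition \ref{pr:1.1} and the Balian--Low theorem (\cite[Theorem 1.5]{AFK}, \cite[Theorem 10]{H07}), $|\Gamma|>1$. Second, a restriction principle: if $W\subset\mathbb C^n$ is a $\Gamma$-rational complex subspace, then $\Gamma\cap W$ is a set of interpolation for $\mathcal F^2(W)$ (with $\mathcal F^2(W)$ taken for the restricted flat metric) --- extend prescribed data by $0$ on $\Gamma\setminus W$, solve the interpolation problem on $\mathbb C^n$, and restrict the resulting Fock function to $W$, using that this restriction is bounded (a one-line Taylor-coefficient estimate); by Balian--Low on $W$ this forces ${\rm covol}(\Gamma\cap W)>1$.

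\textbf{Part (2)} is then immediate: $X$ has no proper positive-dimensional subvariety, so Proposition \ref{pr:hor} gives $\iota_\Gamma=\tfrac{(n!\,|\Gamma|)^{1/n}}{n}$, and $|\Gamma|>1$.

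\textbf{Part (1).} First I would invoke Demailly--P\u{a}un's Nakai--Moishezon criterion \cite{DP} (with Nakamaye's refinement \cite{Nak,Oht}) for the nef, non-K\"ahler class $\mu^*\omega_{\rm euc}-\epsilon_x(\omega_{\rm euc})E$ on the blow-up $\mu: \widetilde X\to X$ at $x$: it yields an irreducible $V_0\ni x$ attaining the infimum in Tosatti's formula, with $V_0\not\subset E$ (on $E$ the class is a positive multiple of $\mathcal O(1)$). By hypothesis $V_0$ is a translate of a subtorus; translating it through the origin, $V_0=W/(\Gamma\cap W)$ for a $\Gamma$-rational $W$ of dimension $d$, so ${\rm mult}_x V_0=1$ and $\int_{V_0}\omega_{\rm euc}^d=d!\,{\rm covol}(\Gamma\cap W)>d!\geq1$ by the restriction principle. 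Hence $\epsilon_x(\omega_{\rm euc})>1$ (the case $V_0=X$ being handled by $|\Gamma|>1$), and $\iota_\Gamma\geq\epsilon_x(\omega_{\rm euc})/n>1/n$.

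\textbf{Part (3)} is the delicate one: the minimizing $V_0$ need no longer be a torus, so its multiplicity $m:={\rm mult}_x V_0$ must be controlled. If $\omega_{\rm euc}$ is rational then $X$ is projective, $\omega_{\rm euc}\in c_1(L)$ with $L$ ample, and Nakamaye's theorem \cite{Nak} applies verbatim, placing $V_0$ in the null locus of $\mu^*L-\epsilon_x(L)E$; if $\rho(X)=n^2$ then $X$ is again an abelian variety and rational ample classes are dense in the K\"ahler cone, so each $\mu^*\omega_{\rm euc}-tE$ is a limit of such classes and the same conclusion follows from \cite{DP} plus Nakamaye by a specialization argument. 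The plan is then to bound $\int_{V_0}\omega_{\rm euc}^d/m$ below by $e^{-d}$ --- via Lindholm's necessary conditions for interpolation \cite{Lindholm} applied to the multiplicity-$m$ trace of $\Gamma$ along $V_0$ --- so that, with the elementary inequality $n!>(n/e)^n$, one gets $\epsilon_x(\omega_{\rm euc})>1/e$ and therefore $\iota_\Gamma\geq\epsilon_x(\omega_{\rm euc})/n>1/(ne)$. The hard part will be exactly this last estimate: controlling, uniformly over all Seshadri-computing subvarieties, the interplay of the multiplicity $m$ with the volume $\int_{V_0}\omega_{\rm euc}^d$. This is where the two hypotheses are indispensable --- rationality puts us in a genuinely projective situation where Nakamaye's base-locus theorem is available without caveats, and the maximal Picard number makes the rational classes dense enough in the relevant cones that Nakamaye's conclusion can be transported from rational approximants --- and it is also the reason the bound in Part (3) is only $1/(ne)$ rather than the $1/n$ of Part (1).
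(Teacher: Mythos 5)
Your reduction $\iota_\Gamma\geq\epsilon_x(\omega_{\rm euc})/n$ and your treatment of parts (1) and (2) essentially coincide with the paper's proof: part (2) is Proposition \ref{pr:hor} plus the Balian--Low bound $|\Gamma|>1$, and part (1) combines Tosatti's subvariety formula with the restriction principle (a subtorus through $0$ is $E/(E\cap\Gamma)$, the sublattice $E\cap\Gamma$ inherits the interpolation property, hence its covolume exceeds $1$). Note that in (1) you do not actually need Demailly--P\u{a}un/Nakamaye to produce a Seshadri-computing $V_0$ (whose existence for a transcendental class is not automatic): since a translate of a subtorus that contains $0$ is itself a subtorus, every term of Tosatti's infimum is bounded below by $\bigl(d!\,{\rm covol}(\Gamma\cap E)\bigr)^{1/d}>1$, which is all that is used.

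Part (3), however, has a genuine gap, and it sits exactly at the step you yourself flag as ``the hard part.'' Your plan is to bound $\int_{V_0}\omega_{\rm euc}^{d}/{\rm mult}_x V_0$ from below by $e^{-d}$ for a general Seshadri-computing subvariety $V_0$ by applying Lindholm's necessary density conditions to ``the multiplicity-$m$ trace of $\Gamma$ along $V_0$.'' No such estimate is established: Lindholm's theorems concern entire functions on $\mathbb C^n$, not weighted point counting on a possibly singular $d$-dimensional subvariety, and nothing in the sketch converts the interpolation hypothesis on $\Gamma\subset\mathbb C^n$ into a volume-versus-multiplicity inequality on an arbitrary $V_0$. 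The paper sidesteps the multiplicity problem entirely by invoking Ito's Nakamaye-type inequality (\cite[Lemma 3.2]{Ito}): for $\omega$ integral on $\Gamma$,
$$
\epsilon_0(\omega)\ \geq\ \inf_{V\ni 0}\ \frac{\bigl(\int_V\omega^{\dim V}\bigr)^{1/\dim V}}{\dim V},
$$
where the infimum runs only over positive-dimensional \emph{abelian} subvarieties; this extends to rational $\omega$ by $1$-homogeneity of both sides, and under the Picard number $n^2$ hypothesis to all $\omega$ by density of rational classes in the K\"ahler cone. Each such $V$ is a subtorus, so its multiplicity is $1$ and the restriction principle gives $\int_V\omega^{d}>d!$; Stirling's bound $(d!)^{1/d}\geq d/e$ then yields $\epsilon_0(\omega)>1/e$ and hence $\iota_\Gamma>1/(n\,e)$. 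In other words, the $1/e$ comes from the factor $1/\dim V$ in Ito's inequality combined with $(d!)^{1/d}/d\geq 1/e$, not from any density estimate along $V_0$; to complete your argument you should replace the Lindholm step by this ingredient (or prove an equivalent statement).
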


\begin{proof} Denote by $0$ the unit of the torus and write $\omega:=\omega_{\rm euc}$.
The main idea is to use the following Demailly--P\u{a}un identity proved by Tosatti in \cite[Theorem 4.6]{Tosatti}
$$
\epsilon_0(\omega)=\inf_{V\ni 0} \left( \frac{\int_V \omega^{\dim V}}{{\rm mult}_0 V}\right)^{\frac1{\dim V}},
$$
where the infimum runs over all positive-dimensional irreducible analytic subvarieties $V$ containing $0$, and ${\rm mult}_0 V$ denotes the multiplicity of $V$ at $0$. Now let us prove Theorem (1), by our assumption, it suffices to show that for all complex subtorus
$$
\int_V \omega^{d}  >1,   \ \ d:=\dim V.
$$
Choose a $\mathbb C$ linear subspace $E$ of $\mathbb C^n$ such that $V=E/(E\cap \Gamma)$.
Notice that if $\Gamma$ is a set of interpolation for $\mathcal F^2$, then $E\cap \Gamma$ is a set of interpolation for $\mathcal F^2|_{E}$. Thus the Balian--Low type theorem implies  that
$$
|V|=\int_V \omega^{d} /d! >1,
$$
which gives $\epsilon_0(\omega)>1$, hence (1) follows.  Now assume further that $X$ has no non-trivial subvarieties, then \eqref{eq:Dem} implies
$$
(n\iota_\Gamma)^n/n!=  |X|=|\Gamma|, 
$$
thus (2) follows directly from the Balian--Low type theorem. To prove  (3), we shall use the following inequality (see \cite[Lemma 3.2]{Ito})
\begin{equation}\label{eq:Nak}
\epsilon_0(\omega)\geq \inf_{V\ni 0} \frac{\left( \int_V \omega^{\dim V}\right)^{\frac1{\dim V}}}{\dim V}, \ \ \text{if $\omega$ is integral on $\Gamma$},
\end{equation}
where the infimum runs over all positive-dimensional abelian subvarieties $V$ containing $0$. Notice that the right hand side of \eqref{eq:Nak} is 1-homogeneous with respect to $\omega$, we know that \eqref{eq:Nak} also holds for all $\omega=c\omega'$, where $\omega'$ is integral on $\Gamma$. In particular, it holds true if $\omega$ is rational on $\Gamma$. In case the Picard number of $X$ is $n^2$, we know that $\omega$ can be approximated by rational $\omega'$, hence \eqref{eq:Nak} is true for all $\omega$. By Balian--Low type theorem, we have
$$
\int_V \omega^{d}  > d! .
$$
By Stirling's approximation, we have 
$$
(d!)^{1/d}/d \geq e^{-1},
$$
hence (3) follows. 
\end{proof}

\subsection{H\"ormander constants and densities of  general discrete sets}

\begin{definition}\label{de:shsha-2} Let $S$ be a discrete set in $\mathbb C^n$. Let $\psi$ be a non positive function such that $\psi+\pi|z|^2$ is plurisubharmonic on $\mathbb C^n$. Let $\gamma$ be positive number. We call $(\psi, \gamma)$ an $S$-admissible pair if $\psi$ is smooth outside $S$, $e^{-\psi/\gamma}$ is not integrable near every point in $S$ and there exists a small constant $\varepsilon_0>0$ such that
\begin{equation}\label{eq: eM}
\inf_{\varepsilon<|z-\lambda|<2\varepsilon \ \text{for some} \ \lambda \in S}  \psi(z) >-\infty, \qquad \ \forall \ 0<\epsilon\leq\epsilon_0.
\end{equation}
Assume that there exists an $S$-admissible pair, then we call
$$
\iota(S):=\sup\{\gamma >0: \text{there exists $\psi$ such that $(\psi, \gamma)$ is $S$-admissible}\}
$$
the H\"ormander constant of $S$. 
\end{definition}

Since $\psi$ equals to $-\infty$ at $S$, \eqref{eq: eM} implies that
$$
|\lambda-\lambda'|\geq 2\epsilon_0, \ \ \forall \ \lambda\neq \lambda' \in S,
$$
thus $S$ is uniformly discrete.  The proof of the H\"ormander criterion also implies:

\begin{theorem}\label{th:h2} Let $S$ be a discrete set in $\mathbb C^n$.  Assume that $\iota(S)>1$. Then there exists a constant $C>0$ such that for every sequence of complex numbers $a=\{a_\lambda\}_{\lambda\in S}$ with 
$$
|a|^2:=\sum_{\lambda\in S} |a_\lambda|^2 e^{-\pi|\lambda|^2}=1,
$$
there exists $F\in \mathcal F^2$ such that $F(\lambda)=a_\lambda$ for all $\lambda\in S$ and $||F||^2 \leq C$.
\end{theorem}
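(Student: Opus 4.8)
The plan is to run the proof of the H\"ormander criterion (Theorem \ref{th:hor}) essentially verbatim, with the lattice $\Gamma$ and its $\Gamma$-invariant weight replaced by the discrete set $S$ and an $S$-admissible pair. Since $\iota(S)>1$, I would first fix a number $\gamma>1$ and a non-positive function $\psi$, smooth outside $S$, such that $\psi+\pi|z|^2$ is plurisubharmonic on $\mathbb C^n$, such that $e^{-\psi/\gamma}$ is non-integrable near every point of $S$, and such that the annular lower bound \eqref{eq: eM} holds; in particular $S$ is uniformly discrete with separation at least $2\varepsilon_0$. I would then choose a cutoff $\chi$ on $\mathbb R$ equal to $1$ near $0$ and vanishing outside a small interval, so that, as a function of $z$, each $\chi(|z-\lambda|)$ is smooth, equals $1$ near $\lambda$, and the supports of the forms $\dbar\chi(|z-\lambda|)$ are pairwise disjoint annuli contained in sets of the form $\{\varepsilon\le|z-\lambda|\le 2\varepsilon\}$ with $\varepsilon\le\varepsilon_0$.

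Given $a=\{a_\lambda\}$ with $\sum_{\lambda\in S}|a_\lambda|^2e^{-\pi|\lambda|^2}=1$, I would apply Theorem \ref{th:hor1} with the weight $\phi(z):=\pi|z|^2+\psi(z)/\gamma$ and the smooth, $\dbar$-closed $(0,1)$-form
\[
v(z):=\sum_{\lambda\in S}a_\lambda\,e^{\pi\bar\lambda(z-\lambda)}\,\dbar\chi(|z-\lambda|)
\]
($\dbar$-closedness holds because $z\mapsto e^{\pi\bar\lambda(z-\lambda)}$ is entire and $\dbar\dbar\chi=0$). Two verifications are needed. First, $\phi$ is plurisubharmonic and $\phi-\delta|z|^2=\gamma^{-1}(\pi|z|^2+\psi)$ is plurisubharmonic for $\delta:=(1-\gamma^{-1})\pi>0$, directly from $\psi+\pi|z|^2$ being psh. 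Second, $\int_{\mathbb C^n}|v|^2e^{-\phi}\le C$ with $C$ independent of $a$: using the elementary identity $|e^{\pi\bar\lambda(z-\lambda)}|^2e^{-\pi|z|^2}=e^{-\pi|\lambda|^2}e^{-\pi|z-\lambda|^2}$ and the disjointness of supports, the integral splits over $\lambda$ into $|a_\lambda|^2e^{-\pi|\lambda|^2}$ times $\int e^{-\pi|z-\lambda|^2}|\dbar\chi(|z-\lambda|)|^2e^{-\psi(z)/\gamma}$, and on the annular support of $\dbar\chi(|z-\lambda|)$ the bound \eqref{eq: eM} makes $e^{-\psi/\gamma}$ bounded from above uniformly in $\lambda$; summing gives $C=K\sum_\lambda|a_\lambda|^2e^{-\pi|\lambda|^2}=K$ for a constant $K$ depending only on $\chi$ and the uniform bound from \eqref{eq: eM}. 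This last estimate is the only place where $S$-admissibility, rather than mere uniform discreteness, is used, and it is the step I expect to require the most care; it is, however, still routine.

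Theorem \ref{th:hor1} then produces a smooth $u$ with $\dbar u=v$ and $\int_{\mathbb C^n}|u|^2e^{-\phi}\le\delta^{-1}C$. Since $e^{-\psi/\gamma}$ is non-integrable near each point of $S$ and $e^{-\pi|z|^2}$ is locally bounded above and below, $e^{-\phi}$ is non-integrable near $S$; as $u$ is smooth, this forces $u(\lambda)=0$ for every $\lambda\in S$. Setting
\[
F(z):=\sum_{\lambda\in S}a_\lambda\,e^{\pi\bar\lambda(z-\lambda)}\chi(|z-\lambda|)-u(z),
\]
one obtains $\dbar F=0$, $F(\lambda)=a_\lambda$, and, using $e^{-\phi}\ge e^{-\pi|z|^2}$ (because $\psi\le 0$) to bound the $u$-term together with the same elementary estimate and disjointness of supports for the cutoff sum, $||F||^2\le C_1$ with $C_1$ depending only on $S$ and $\gamma$, not on $a$. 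This is exactly the quantitative interpolation statement of the theorem. The conceptual point is that $\Gamma$-invariance, which in the lattice case made the uniform bound on $v$ automatic, is replaced here precisely by condition \eqref{eq: eM} in the definition of an $S$-admissible pair.
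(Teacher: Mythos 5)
Your proposal is correct and follows exactly the route the paper intends: the paper gives no separate argument for Theorem \ref{th:h2} beyond the remark that ``the proof of the H\"ormander criterion also implies'' it, and your adaptation -- replacing $\Gamma$-invariance of the weight by the two conditions in Definition \ref{de:shsha-2} (non-integrability of $e^{-\psi/\gamma}$ near $S$ to force $u|_S=0$, and the annular bound \eqref{eq: eM} to get the uniform estimate on $\int|v|^2e^{-\phi}$) -- is precisely the right way to carry that proof over. No gaps.
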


\begin{definition}  Let $S$ be a discrete set in $\mathbb C^n$. We shall define the upper uniform density of $S$ as
$$
D^{+}(S):= \limsup_{r\to\infty} \sup_{z_0\in \mathbb C^n} \frac{n(z_0, r)}{\pi^nr^{2n}/ n!}, 
$$
where $n(z_0, r)$ denotes the number of points in
$$
B(z_0, r):=\{z\in \mathbb C: |z-z_0|<r\}.
$$
\end{definition}

In case $S$ is a lattice in $\mathbb C^n$, we know that $D^{+}(S)^{-1}$ is equal to the Lebesgue measure of the torus $\mathbb C^n/S$. In the one-dimensional case, we also have the following general result.

\begin{theorem} Let $S$ be a uniformly discrete set in $\mathbb C$. Then $D^{+}(S) \cdot \iota(S)=1$
\end{theorem}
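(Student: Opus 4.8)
We read the claimed identity $D^{+}(S)\cdot\iota(S)=1$ as $\iota(S)=1/D^{+}(S)$ in $(0,+\infty]$, and prove the two inequalities separately. The starting observation is a scaling remark: for $c>0$ the substitution $\psi\mapsto c^{2}\psi(\cdot/c)$ sends an $S$-admissible pair $(\psi,\gamma)$ to a $(cS)$-admissible pair $(c^{2}\psi(\cdot/c),\,c^{2}\gamma)$ — non-positivity, the subharmonicity of $c^{2}\psi(z/c)+\pi|z|^{2}=c^{2}\bigl(\psi(w)+\pi|w|^{2}\bigr)\big|_{w=z/c}$, the orders of the logarithmic poles, and the annulus condition \eqref{eq: eM} all transform as required — so $\iota(cS)=c^{2}\iota(S)$, while obviously $D^{+}(cS)=c^{-2}D^{+}(S)$. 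Thus $D^{+}(S)\iota(S)$ is scale invariant, consistent with the claim, and it lets us move freely between $S$ and its dilates.

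For $\iota(S)\le 1/D^{+}(S)$, suppose the contrary, i.e.\ $1/\iota(S)<D^{+}(S)$, and choose $c>0$ with $1/\iota(S)<c^{2}\le D^{+}(S)$. Then $S':=cS$ is uniformly discrete with $\iota(S')=c^{2}\iota(S)>1$, so by the H\"ormander criterion for discrete sets (Theorem~\ref{th:h2}) $S'$ is a set of interpolation for $\mathcal F^{2}$; but $D^{+}(S')=c^{-2}D^{+}(S)\ge 1$, contradicting the one-dimensional necessary density condition for interpolation in the Bargmann--Fock space (Seip--Wallst\'en \cite{Seip92,SW92}: a uniformly discrete set of interpolation has upper uniform density strictly below $1$). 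Hence $\iota(S)\le 1/D^{+}(S)$, with nothing to prove when $D^{+}(S)=0$.

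For the reverse inequality it suffices to produce, for every $\gamma$ with $\gamma D^{+}(S)<1$, an $S$-admissible pair with parameter $\gamma$. The guiding principle is that the divisor $\gamma\sum_{\lambda\in S}\delta_{\lambda}$ has uniform density $\gamma D^{+}(S)<1$ measured against Lebesgue mass, so there is room to realise it as the Riesz measure $dd^{c}u$ of a subharmonic $u$ that stays below $\pi|z|^{2}$. Concretely I would fix $\delta>0$ with $\gamma D^{+}(S)<1-2\delta$ and $\varepsilon_{0}>0$ smaller than half the separation constant of $S$, and carry out the one-dimensional square-decomposition and local-patching construction underlying \cite{Seip92,SW92}: split $\mathbb C$ into unit squares, on each square prescribe the point masses $\gamma\delta_{\lambda}$ of the boundedly many points of $S$ it meets together with a smooth nonnegative background density bringing the local Riesz mass up to the local Lebesgue mass, then glue with a partition of unity; uniformity of the bound $\gamma D^{+}(S)<1$ keeps the gluing error $O(1)$ and yields a subharmonic $u$ on $\mathbb C$ with $dd^{c}u\ge\gamma\delta_{\lambda}$ at each $\lambda\in S$, with $u\in C^{\infty}(\mathbb C\setminus S)$, and with $u(z)\le\pi|z|^{2}+C$. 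Then $\psi:=u-C-\pi|z|^{2}$ satisfies $\psi\le 0$, $\psi+\pi|z|^{2}=u-C$ is subharmonic, $\psi$ is smooth off $S$, and near each $\lambda$ one has $\psi(z)=\gamma\log|z-\lambda|^{2}+O(1)$, so $e^{-\psi/\gamma}$ is non-integrable at every point of $S$; the uniform lower bound \eqref{eq: eM} follows by writing $u$ near $\lambda_{0}\in S$ as $\gamma\log|z-\lambda_{0}|^{2}$ plus the contribution of the finitely many neighbouring masses and the smooth background — bounded below on $\{\varepsilon_{0}<|z-\lambda_{0}|<2\varepsilon_{0}\}$ uniformly in $\lambda_{0}$ by uniform discreteness — plus a convergent tail controlled by the same density bound. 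Thus $(\psi,\gamma)$ is $S$-admissible, $\iota(S)\ge\gamma$, and letting $\gamma\uparrow 1/D^{+}(S)$ completes the proof.

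The main obstacle is the construction in the previous paragraph, and specifically the sharp estimate $u(z)\le\pi|z|^{2}+O(1)$: passing from a bound on the mass of $dd^{c}u$ on every disc $B(z_{0},r)$ to a pointwise upper bound for $u$ with the correct leading coefficient $\pi$ is exactly where one must use the uniform discreteness of $S$ and, crucially, the \emph{uniform} (rather than merely radial) definition of $D^{+}(S)$, since radial density alone would permit angular concentration and a loss in the constant. I would isolate this estimate as a lemma, drawing on the one-dimensional density theory of \cite{Lyu92,Seip92,SW92} rather than reproving it from scratch; the rest is bookkeeping.
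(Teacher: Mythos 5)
Your proof is correct and follows essentially the same route as the paper: the inequality $D^{+}(S)\iota(S)\le 1$ via the scaling identity $\iota(cS)=c^{2}\iota(S)$, Theorem \ref{th:h2}, and the necessary density condition for interpolation (the paper cites \cite{OS}), and the reverse inequality via a Berndtsson--Ortega-type construction of a subharmonic function with prescribed logarithmic poles (the paper simply invokes \cite[pp.~113--114]{BO} where you sketch the patching argument).
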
 

\begin{proof}
Since $S$ is uniformly discrete, we have $\iota(S) >0$, a change of variable argument gives
$$
\iota((\iota(S)-\varepsilon)^{-1/2}S )=(\iota(S)-\varepsilon)^{-1}\epsilon(S) >1
$$
for every sufficiently small $\varepsilon >0$. Thus Theorem \ref{th:h2} implies that $
(\iota(S)-\varepsilon)^{-1/2}S$ is $\mathcal F^2$ interpolating. Apply the main result in \cite{OS}, we know that
$$
1>D^{+}((\iota(S)-\varepsilon)^{-1/2}S) =D^{+}(S)\cdot (\iota(S)-\varepsilon).
$$
Letting $\varepsilon$ go to zero, we have
$$
D^{+}(S) \cdot \iota(S) \leq 1.
$$
Now it suffices to show that
$$
\iota(S) \geq D^{+}(S)^{-1}.
$$
Assume that $D^{+}(S)^{-1}=b$, then for every $0<a<\gamma<b$,
$$
\sup_{z_0\in \mathbb C} \frac{n(z_0, r)}{\pi r^2} \leq \frac1\gamma,
$$
for all sufficiently large $r$, which gives
$$
\int_{B(z_0,r)} i\partial\dbar |z|^2/2=\pi r^2 \geq \gamma\cdot n(z_0, r)
$$
for all $z_0\in \mathbb C$. Apply the Berndtsson--Ortega construction in \cite[page 113--114]{BO}, the above inequality implies that $\iota(S) \geq a$ for every $a<b$. Hence $\iota(S)  \geq b=D^{+}(S)^{-1}$. The proof is complete.
\end{proof}

Using results from \cite{BO, OS}, one may also generalize the above theorem to general weight function $\phi$ with $\phi_{z\bar z}$ bounded by two positive constants. For general higher-dimensional cases, by \cite[Theorem 2]{Lindholm},  we know that if $S$ is $\mathcal F^2$ interpolating then $
D^{+}(S) \leq 1$.
However, in general, $S$ may not be $\mathcal F^2$ interpolating even $D^{+}(S)$ is small enough. Comparing with Theorem \ref{th:h2}, this means that there exists $S$ with very small upper uniform density whose H\"ormander constant is also small.

\subsection{Proof of Theorem \ref{th:uniformity}} Notice that if $((\mathbb Z\oplus \frac12 \mathbb Z)^2, e^{-\pi|t|^2})$ gives a frame in $L^2(\mathbb R^2)$ then $(\mathbb Z^2, e^{-\pi t^2})$ defines a frame in $L^2(\mathbb R)$, which is not true by the Balian-Low type theorem. Now it suffices to prove (2) since (2) implies (1) by the remark after Corollary \ref{co:ff}. Put
$$
X:=\mathbb R^4/(\mathbb Z\oplus2\mathbb Z)^2,
$$
then we know that $(X,\omega)$ is of type $(1,4)$.
Since the moduli space of polarized type $(d_1,d_2)$ ($d_1, d_2$ are fixed positive integers) Abelian surfaces is equal to the Siegel upper half-space,  to prove (2), by the H\"ormander criterion, it suffices to show that the Seshadri constant of a generic polarized type $(1,4)$ Abelian surface is bigger than two. Since generically a polarized type $(1,4)$ Abelian surface has Picard number one, by Theorem 6.1 (b) in \cite{Bauer}, its Seshadri constant equals 
$$
8/\sqrt{8+1}=8/3 >2,
$$
where we use the fact that $k=1, l=3$ is the primitive solution of the following Pell's equation
$$
l^2-8k^2=1.
$$
The proof is complete.

\section{Non transcendental examples}

\subsection{Gr\"ochenig--Lyubarskii's example} Let us look at the following lattice in $\mathbb R^2 \times \mathbb R^2 $ (see \cite[page 3, (4)]{GL}): 
$$
 \Lambda=\left\lbrace \left(e+\frac12 f,\ \frac{\sqrt 3}{2}f\right): e,f \in \mathbb Z^2 \right\rbrace.
$$
Its symplectic dual is
$$
\Lambda^\circ=\left\lbrace \left(e^*,\ \frac{-1}{\sqrt 3}e^*+ \frac{2}{\sqrt 3}f^*\right): e^*,f^* \in \mathbb Z^2 \right\rbrace. 
$$
Fix $\Omega=i I_n$, where $I_n$ denotes the identity matrix. With the notation in Proposition \ref{pr:1.1}, we have
$$
\Gamma_{\Omega, \Lambda^\circ}=\left\lbrace \left(e^*,\ \frac{-1}{\sqrt 3}e^*+ \frac{2}{\sqrt 3}f^*\right): e^*,f^* \in \mathbb Z[i] \right\rbrace.
$$
Let us estimate the Seshadri constant of 
$$
\omega=\omega_{\rm euc}:=\frac i2 \sum_{j=1}^2 dz_j \wedge d\bar z_j
$$
on the complex tori $X:=\mathbb C^2/\Gamma_{\Omega, \Lambda^\circ}$. Notice that the Riemannian metric induced by $\omega$ is  precisely the euclidean metric $|\cdot |$, hence
$$
\inf_{\lambda \neq \lambda' \in \Gamma_{\Omega, \Lambda^\circ}} |\lambda-\lambda'|^2 =\inf_{0 \neq \mu \in \Gamma_{\Omega, \Lambda^\circ}} |\mu|^2  = |(1, -1/\sqrt3)|^2 =\frac43.
$$
Thus the following ball
$$
B:=\{|z|<1/\sqrt 3\}
$$
contains precisely one point in $\Gamma_{\Omega, \Lambda^\circ}$ and we can think of $B$ as a K\"ahler ball in $X$, which gives (see Theorem \ref{th:MP}) the following Seshadri constant inequality
\begin{equation}\label{eq:GL-0}
\epsilon_0(\omega) \geq \frac\pi3 >1.
\end{equation}
However, from \cite[page 3, (4)]{GL}, we know that $(\Lambda, g_\Gamma)$ does not define a frame in $L^2(\mathbb R^2)$. Thus by Proposition \ref{pr:1.1}, $\Gamma_{\Omega, \Lambda^\circ}$ is not a set of interpolation for $\mathcal F^2$. To summarize, we obtain:

\begin{theorem}\label{th:GL-1} There exists a lattice in $\mathbb C^2$ whose Seshadri constant is bigger than one but it is not a set of interpolation for $\mathcal F^2$.
\end{theorem}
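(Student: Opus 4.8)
The plan is to produce one explicit lattice in $\mathbb C^2$ by decoupling two of its invariants: a \emph{lower} bound for the Seshadri constant, obtained by fitting a single round Euclidean ball into the quotient torus, and the \emph{failure} of the interpolation property, which I would import from a known non-frame example. The natural candidate is the Gröchenig--Lyubarskii lattice, so the whole argument is really an assembly of Proposition \ref{pr:1.1}, Theorem \ref{th:MP}, and the computation in \cite{GL}.

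First I would fix $\Omega=iI_2$ and set $\Lambda=\{(e+\tfrac12 f,\ \tfrac{\sqrt3}{2}f): e,f\in\mathbb Z^2\}\subset\mathbb R^2\times\mathbb R^2$. A routine computation of the symplectic dual gives $\Lambda^\circ=\{(e^*,\ -\tfrac1{\sqrt3}e^*+\tfrac2{\sqrt3}f^*): e^*,f^*\in\mathbb Z^2\}$, and since $({\rm Im}\,\Omega)^{-1/2}=I_2$ and $z=\eta+iy$, the lattice attached to $\Lambda$ in Definition \ref{de:tr} is $\Gamma_{\Omega,\Lambda^\circ}=\{(e^*,\ -\tfrac1{\sqrt3}e^*+\tfrac2{\sqrt3}f^*): e^*,f^*\in\mathbb Z[i]\}\subset\mathbb C^2$. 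Next I would compute a shortest nonzero vector of $\Gamma_{\Omega,\Lambda^\circ}$ in the Euclidean norm induced by $\omega_{\rm euc}$; it is attained at $(1,-1/\sqrt3)$, of squared length $4/3$. Hence the ball $B=\{|z|<1/\sqrt3\}$ contains exactly one lattice point, its $\Gamma_{\Omega,\Lambda^\circ}$-translates are pairwise disjoint, and $B$ descends to a holomorphically embedded Kähler ball in $X:=\mathbb C^2/\Gamma_{\Omega,\Lambda^\circ}$ with $f^*\omega_{\rm euc}=\omega_{\rm euc}$. By Theorem \ref{th:MP} (equivalently the case $\beta=(1/2,1/2)$ of Theorem A), this forces $\epsilon_0(\omega_{\rm euc})\geq c_0(\omega_{\rm euc})\geq \pi/3>1$.

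To finish, I would invoke the fact (from \cite{GL}) that the Gabor system $(g_{iI},\Lambda)$ is \emph{not} a frame for $L^2(\mathbb R^2)$; Proposition \ref{pr:1.1} then yields at once that $\Gamma_{\Omega,\Lambda^\circ}$ is not a set of interpolation for $\mathcal F^2$, and the two conclusions together prove the theorem. The argument itself is short — an elementary lattice computation plus the already-established identification of the Seshadri constant with the Kähler width — so the only real obstacle lies outside this paper: one needs the delicate harmonic-analysis computation of \cite{GL} certifying that this particular lattice just barely fails the frame property despite having admissible covolume $|\Lambda|=3/4<1$. Without such an example the statement would have no content, and re-deriving it from scratch (rather than quoting \cite{GL}) would be the genuinely hard part.
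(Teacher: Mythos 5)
Your proposal is correct and follows essentially the same route as the paper: the same Gr\"ochenig--Lyubarskii lattice, the same computation of the shortest vector giving an embedded ball of radius $1/\sqrt 3$ and hence $\epsilon_0(\omega_{\rm euc})\geq \pi/3>1$ via Theorem \ref{th:MP}, and the same appeal to \cite{GL} together with Proposition \ref{pr:1.1} to rule out interpolation. Nothing is missing.
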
 

\noindent
\textbf{Remark.}  \emph{By the H\"ormander criterion, we know every lattice in $\mathbb C^2$ with Seshadri constant bigger than two is a set of interpolation for $\mathcal F^2$.  In the above example, one may further prove that
\begin{equation}\label{eq:GL}
\frac 43\geq \epsilon_0(\omega)  \geq \frac\pi3.
\end{equation}
In fact the complex line $\mathbb C(0, 2/\sqrt 3)$ covers a subtorus, say 
$$
V\simeq \mathbb C / (2/\sqrt 3)\mathbb Z[i],
$$  
of $X$, thus \cite[Theorem 4.6]{Tosatti} implies that
$$
 (2/\sqrt 3)^2=\int_V \omega \geq \epsilon_0(\omega),
$$
from which \eqref{eq:GL} follows.}

\subsection{Complex lattices}

We call $\Gamma$ a \emph{complex lattice} if 
$$
i \Gamma= \Gamma.
$$
One may verify the following:

\begin{proposition} For a lattice $\Gamma$ in $\mathbb C^n$, the followings are equivalent
\begin{itemize}
\item[(1)] $\Gamma$ is a complex lattice;
\item[(2)] $\Gamma=\mathbb Z[i] \{\gamma_1,\cdots, \gamma_n\}$ for some $\gamma_j\in\mathbb C^n$;
\item[(3)] $\Gamma=A \mathbb Z[i]^n$ for some $A\in GL(n, \mathbb C)$;
\item[(4)] $X:=\mathbb C^n/\Gamma$ is biholomorphic to $\mathbb C^n/\mathbb Z[i]^n$.
\end{itemize}
\end{proposition}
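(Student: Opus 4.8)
The plan is to prove the cycle $(1)\Rightarrow(2)\Rightarrow(3)\Rightarrow(1)$ together with the equivalence $(3)\Leftrightarrow(4)$. Two of these links are formal: $(3)\Rightarrow(1)$ holds because $i\mathbb{Z}[i]=\mathbb{Z}[i]$, so $i(A\mathbb{Z}[i]^n)=A\mathbb{Z}[i]^n$; and $(3)\Rightarrow(4)$ holds because $A^{-1}\in GL(n,\mathbb{C})$ is a biholomorphism of $\mathbb{C}^n$ carrying $\Gamma$ onto $\mathbb{Z}[i]^n$, hence descends to a biholomorphism of the quotient tori.

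For $(1)\Rightarrow(2)$ I would exploit the algebra hidden in the hypothesis: if $i\Gamma=\Gamma$ then $\Gamma$ is a module over the Gaussian integers $\mathbb{Z}[i]$ (being closed under multiplication by $i$, hence by every element of $\mathbb{Z}[i]$). It is finitely generated and torsion-free, since as a lattice in $\mathbb{C}^n\cong\mathbb{R}^{2n}$ it is free of rank $2n$ over $\mathbb{Z}$. Because $\mathbb{Z}[i]$ is a principal ideal domain, the structure theorem gives that $\Gamma$ is free over $\mathbb{Z}[i]$, and comparing $\mathbb{Z}$-ranks (a rank-one free $\mathbb{Z}[i]$-module has $\mathbb{Z}$-rank $2$) forces the $\mathbb{Z}[i]$-rank to be $n$. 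A $\mathbb{Z}[i]$-basis $\gamma_1,\dots,\gamma_n\in\mathbb{C}^n$ then yields $\Gamma=\mathbb{Z}[i]\{\gamma_1,\dots,\gamma_n\}$, which is $(2)$. For $(2)\Rightarrow(3)$ one just checks $\mathbb{C}$-linear independence of such $\gamma_j$: the $\mathbb{R}$-span of $\{\gamma_j\}$ together with that of $\{i\gamma_j\}$ equals the $\mathbb{R}$-span of $\Gamma$, which is all of $\mathbb{C}^n$ since $\Gamma$ is a full lattice; hence the $\gamma_j$ span $\mathbb{C}^n$ over $\mathbb{C}$ and so form a basis, giving $A:=(\gamma_1\,|\,\cdots\,|\,\gamma_n)\in GL(n,\mathbb{C})$ with $\Gamma=A\,\mathbb{Z}[i]^n$.

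The substantive step is $(4)\Rightarrow(3)$. Given a biholomorphism $F\colon \mathbb{C}^n/\Gamma\to\mathbb{C}^n/\mathbb{Z}[i]^n$, I would first compose with a translation so that $F$ sends the origin to the origin, then lift to the universal covers to obtain a biholomorphic automorphism $\tilde F$ of $\mathbb{C}^n$ with $\tilde F(0)=0$ and a lattice isomorphism $\rho\colon\Gamma\to\mathbb{Z}[i]^n$ such that $\tilde F(z+\gamma)=\tilde F(z)+\rho(\gamma)$ for all $z$ and $\gamma$. Differentiating in $z$, the Jacobian $d\tilde F$ is $\Gamma$-periodic, hence descends to a holomorphic map on the compact torus $\mathbb{C}^n/\Gamma$ with values in $n\times n$ matrices; each matrix entry is a bounded entire function, hence constant by Liouville. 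Thus $d\tilde F\equiv A$ and $\tilde F(0)=0$ force $\tilde F(z)=Az$, with $A\in GL(n,\mathbb{C})$ by invertibility of $\tilde F$; plugging into the equivariance relation gives $A\gamma=\rho(\gamma)$, so $A\Gamma=\mathbb{Z}[i]^n$, i.e. $\Gamma=A^{-1}\mathbb{Z}[i]^n$, which is $(3)$.

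The main obstacle is precisely this last implication: while each ingredient (existence of the equivariant lift, periodicity of the Jacobian, Liouville's theorem) is classical, the point that deserves care is that a biholomorphism between complex tori lifts to a \emph{globally affine} automorphism of $\mathbb{C}^n$ --- once that rigidity is in place everything is formal, and the only other genuinely used fact is the $\mathbb{Z}[i]$-module structure theorem in $(1)\Rightarrow(2)$.
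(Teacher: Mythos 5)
Your proof is correct and complete: the cycle $(1)\Rightarrow(2)\Rightarrow(3)\Rightarrow(1)$ via the structure theorem for finitely generated torsion-free modules over the PID $\mathbb{Z}[i]$, together with $(3)\Leftrightarrow(4)$ via the standard rigidity argument (a biholomorphism of complex tori lifts to an affine map because the $\Gamma$-periodic Jacobian is constant by Liouville), establishes all four equivalences. The paper states this proposition with the remark ``one may verify the following'' and supplies no proof, so there is nothing to compare against; the route you take is the natural one and all the steps you flag as needing care (torsion-freeness over $\mathbb{Z}[i]$, surjectivity of $\rho$, invertibility of $A$) are handled correctly.
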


Now we can prove a generalization of \eqref{eq:GL}.

\begin{theorem}\label{th:gl} Assume that $\Gamma=A \mathbb Z[i]^n$ is a complex lattice. Then 
$$
m(\Gamma) \geq \epsilon_0(\omega)\geq \max\left\lbrace\frac\pi 4 m(\Gamma), \ e_{min}(A)\right\rbrace,
$$
where 
$$
e_{min}(A):=\inf_{z\in \mathbb C^n, \, |z|=1} |Az|^2.
$$
\end{theorem}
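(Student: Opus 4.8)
The plan is to establish the three inequalities $m(\Gamma)\ge\epsilon_0(\omega)$, $\epsilon_0(\omega)\ge\frac\pi4 m(\Gamma)$ and $\epsilon_0(\omega)\ge e_{\min}(A)$ separately, the main tools being Theorem~\ref{th:MP} (the identity $\epsilon_x(\omega)=c_x(\omega)$ between the Seshadri constant and the K\"ahler width) and the Tosatti formula $\epsilon_0(\omega)=\inf_{V\ni 0}\,(\int_V\omega^{\dim V}/{\rm mult}_0 V)^{1/\dim V}$ from \cite[Theorem 4.6]{Tosatti} already used in the proof of Theorem~\ref{th:converse}, the infimum running over positive-dimensional irreducible subvarieties through $0$. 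For the upper bound $\epsilon_0(\omega)\le m(\Gamma)$, pick $v\in\Gamma\setminus\{0\}$ with $|v|^2=m(\Gamma)$. Since $\Gamma$ is a complex lattice, $iv\in\Gamma$, so $\mathbb C v\cap\Gamma$ is a rank-two lattice containing $\mathbb Z[i]v$; the ring $R:=\{c\in\mathbb C:cv\in\Gamma\}$ contains $\mathbb Z[i]$ and has no nonzero element of modulus $<1$ (otherwise $\Gamma$ would contain a vector shorter than $v$), so Minkowski's theorem forces $R=\mathbb Z[i]$ and $\mathbb C v\cap\Gamma=\mathbb Z[i]v$. Hence $V:=\mathbb C v/\mathbb Z[i]v$ is a smooth one-dimensional subtorus of $X=\mathbb C^n/\Gamma$ through $0$ with ${\rm mult}_0 V=1$ and $\int_V\omega_{\rm euc}=|v|^2=m(\Gamma)$, and Tosatti's formula gives $\epsilon_0(\omega)\le m(\Gamma)$.

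For $\epsilon_0(\omega)\ge\frac\pi4 m(\Gamma)$, observe that the Euclidean ball $B_\rho$ of radius $\rho=\tfrac12\sqrt{m(\Gamma)}$ injects holomorphically into $X$, because $2\rho$ equals the minimal distance between distinct points of $\Gamma$; Theorem~\ref{th:MP} with $\tilde\omega=\omega_{\rm euc}$ then gives $\epsilon_0(\omega)=c_0(\omega)\ge\pi\rho^2=\tfrac\pi4 m(\Gamma)$ (this is also immediate from the discussion preceding Corollary~\ref{co:tA}).

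The substantive inequality is $\epsilon_0(\omega)\ge e_{\min}(A)$, which I would reduce to the claim $\epsilon_0(\mathbb C^n/\mathbb Z[i]^n,\omega_{\rm euc})\ge 1$. Write $E:=\mathbb C/\mathbb Z[i]$, $\omega_E:=\tfrac i2\,dz\wedge d\bar z$, and let $\pi_j\colon E^n\to E$ (with $E^n=\mathbb C^n/\mathbb Z[i]^n$) be the coordinate projections. The only positive-dimensional subvariety of the curve $E$ is $E$ itself, so Tosatti's formula gives $\epsilon_0(E,\omega_E)=\int_E\omega_E=1$, whence by Theorem~\ref{th:MP} the K\"ahler width $c_0(E,\omega_E)$ equals $1$: for each $\gamma<1$ there are $\tilde\omega_E\in\mathcal K_{\omega_E}$ and a holomorphic injection $f_1\colon\{|u|<r\}\to E$ with $f_1(0)=0$, $f_1^*\tilde\omega_E=\tfrac i2\,du\wedge d\bar u$ and $\pi r^2=\gamma$. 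Then $\tilde\omega:=\sum_{j=1}^n\pi_j^*\tilde\omega_E$ is a K\"ahler metric in $\mathcal K_{\omega_{\rm euc}}$ on $E^n$, and the restriction of $F:=(f_1,\dots,f_1)$ to the round ball $\{|u|<r\}$ inscribed in the polydisc $\{|u_1|<r\}\times\dots\times\{|u_n|<r\}$ is a holomorphic injection into $E^n$ with $F^*\tilde\omega=\tfrac i2\sum_j du_j\wedge d\bar u_j$; hence $c_0(E^n,\omega_{\rm euc})\ge\gamma$ for every $\gamma<1$, and Theorem~\ref{th:MP} gives $\epsilon_0(\mathbb C^n/\mathbb Z[i]^n,\omega_{\rm euc})\ge 1$. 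Next, the $\mathbb C$-linear isomorphism $w\mapsto Aw$ descends to a biholomorphism $\mathbb C^n/\mathbb Z[i]^n\to X$ pulling $\omega_{\rm euc}$ back to the translation-invariant K\"ahler form $\omega_H$ of the positive Hermitian matrix $H:=A^*A$, so by biholomorphism invariance $\epsilon_0(\omega)=\epsilon_0(\mathbb C^n/\mathbb Z[i]^n,\omega_H)$. Since $H-e_{\min}(A)I\ge 0$ one has $\omega_H\ge e_{\min}(A)\,\omega_{\rm euc}$ as $(1,1)$-forms, so every $(e_{\min}(A)\,\omega_{\rm euc})$-psh function is $\omega_H$-psh; monotonicity and $1$-homogeneity of the Seshadri constant then yield $\epsilon_0(\mathbb C^n/\mathbb Z[i]^n,\omega_H)\ge e_{\min}(A)\,\epsilon_0(\mathbb C^n/\mathbb Z[i]^n,\omega_{\rm euc})\ge e_{\min}(A)$. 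Combining the three inequalities proves the theorem.

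\textbf{Main obstacle.} The delicate part is the inequality $\epsilon_0(\mathbb C^n/\mathbb Z[i]^n,\omega_{\rm euc})\ge 1$: one must construct the product K\"ahler metric, check that it lies in $\mathcal K_{\omega_{\rm euc}}$, and---crucially---verify that the \emph{round} ball inscribed in the embedded polydisc is still holomorphically and isometrically embedded, so that Theorem~\ref{th:MP} applies and one is not merely recovering the weaker factor $\tfrac\pi4$. The remaining bookkeeping (the Minkowski argument that $\mathbb C v\cap\Gamma=\mathbb Z[i]v$, and the identification of $(\mathbb C^n/\Gamma,\omega_{\rm euc})$ with $(\mathbb C^n/\mathbb Z[i]^n,\omega_{A^*A})$ with the correct sign of positivity) is routine.
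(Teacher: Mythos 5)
Your proposal is correct, and for two of the three inequalities it coincides with the paper's argument: the upper bound $\epsilon_0(\omega)\le m(\Gamma)$ is obtained in both cases from Tosatti's formula applied to a one-dimensional subtorus $\mathbb C v/\mathbb Z[i]v$ (your Minkowski verification that $\mathbb C v\cap\Gamma=\mathbb Z[i]v$ for a minimal vector is a welcome extra precision the paper glosses over, though even without it the image subtorus has volume $\le|v|^2$ and the bound survives), and the bound $\epsilon_0(\omega)\ge\frac\pi4 m(\Gamma)$ comes from the embedded round ball of radius $\frac12\sqrt{m(\Gamma)}$ exactly as in the discussion preceding Corollary \ref{co:tA}. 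Where you genuinely diverge is the inequality $\epsilon_0(\omega)\ge e_{\min}(A)$. The paper's proof is potential-theoretic and explicit: after the change of variables $z=A^{-1}w$ it takes the Green function $\psi_0$ on $\mathbb C/\mathbb Z[i]$ solving $i\pi\,dz\wedge d\bar z+i\partial\dbar\psi_0=i\partial\dbar\log|z|^2$ and checks that $\psi:=e_{\min}(A)\cdot\max\{\psi_0(z_1),\dots,\psi_0(z_n)\}$ is $\omega$-psh with an order-one log pole along $\Gamma$, which directly exhibits the competitor needed in the definition of the Seshadri constant. You instead invoke Theorem \ref{th:MP} in both directions: first $\epsilon_0(E,\omega_E)=1$ (Tosatti on the elliptic curve) gives, for each $\gamma<1$, a one-dimensional K\"ahler disc embedding of area $\gamma$; the diagonal product of these embeddings restricted to the inscribed round ball gives $c_0(\mathbb C^n/\mathbb Z[i]^n,\omega_{\rm euc})\ge 1$, hence $\epsilon_0\ge1$, and monotonicity plus $1$-homogeneity under $\omega_H\ge e_{\min}(A)\,\omega_{\rm euc}$ transfers this to $e_{\min}(A)$. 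Your product-metric and inscribed-ball checks are sound (the pullback of $\tilde\omega=\sum\pi_j^*\tilde\omega_E$ under $(f_1,\dots,f_1)$ is Euclidean on the whole polydisc, so a fortiori on the round ball of the same radius, which is why you recover the full factor $1$ rather than $\pi/4$). The trade-off is that your route leans on the full equivalence $\epsilon_x=c_x$ of Theorem \ref{th:MP}, whereas the paper's max-of-Green-functions construction is self-contained and produces the extremal quasi-psh function directly; both are legitimate, and in fact your embedding argument is essentially the geometric shadow of the same one-variable-at-a-time idea.
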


\begin{proof}  Choose $z=A^{-1}w$ as the new variable, one may assume that $A=I_n$. Then
$$
\omega=\frac i2 \sum_{j=1}^n dw_j \wedge d\bar w_j \geq  e_{min}(A) \cdot \omega_0, \ \ \ \omega_0:=\frac i2 \sum_{j=1}^n dz_j \wedge d\bar z_j.
$$
Denote by $\psi_0(z)$ the Green function on $\mathbb C/\mathbb Z[i]$ satisfying
$$
i\pi  dz \wedge d\bar z +i\partial\dbar\psi_0= i\partial\dbar \log|z|^2.
$$
then we know that
$$
\psi:= e_{min}(A) \cdot \max\{\psi_0(z_1),\cdots, \psi_0(z_n)\}
$$
satisfies  $2\pi \omega +i\partial\dbar\psi \geq 0$ and has order one log pole at $\Gamma$. Thus we know that $
\epsilon_0(\omega)\geq e_{min}(A)$, together with Theorem \ref{th:MP}, it gives the lower bound of the Seshadri constant that we need. To prove the upper bound, it suffices to choose a subtorus
$$
V_\gamma:=\mathbb C/ \mathbb Z[i]\gamma,  \ \  0\neq \gamma\in \Gamma,
$$
then \cite[Theorem 4.6]{Tosatti} gives
$$
\epsilon_0(\omega) \leq \int_{V_\gamma} \omega=|\gamma|^2. 
$$
Take the infimum over all $0\neq \gamma\in \Gamma$, the upper bound follows.
\end{proof}

\noindent
\textbf{Remark.}  \emph{In case 
$$
\Gamma= a_1 \mathbb Z[i] \times \cdots \times a_n \mathbb Z[i], \ \ a_j >0,
$$
we have
$$
e_{min}(A)=m(\Gamma) = \min\{a^2_1,\cdots, a^2_n\},
$$
thus the above theorem gives
$$
\epsilon_0(\omega)=\min\{a^2_1,\cdots, a^2_n\}.
$$}

\subsection{Seshadri sequence and Gr\"ochenig's result}

In this section, we shall rephrase the main result of Gr\"ochenig in \cite{Gr11} in terms of the Seshadri constant. The main idea is to consider a sequence of extensions, more precisely, let
\begin{equation}\label{eq:sequence}
\{0\}=X_0\subset X_1\cdots \subset X_k=X:=\mathbb C^n/\Gamma, \ \ n_k:=\dim_{\mathbb C} X_k, \ \  k\geq 1,
\end{equation} 
be an increasing sequence of complex Lie subgroups of $X$. We shall introduce the Seshadri constant $\epsilon_j$, $1\leq j\leq k$, for extension from $X_{j-1}$ to $X_{j}$. Let
$$
\pi_j: E_j \to X_j
$$
be the covering map, where $E_j$ is an $n_j$ dimensional complex subspace of $\mathbb C^n$.  Let
$$
E_j= E_{j-1}\oplus F_j,
$$ 
be the orthogonal decomposition with respect to the Euclidean metric $\omega$.  Then
$$
\Gamma_j:=F_j \cap \pi_j^{-1}(X_{j-1})
$$
define a lattice in $F_j$. Put
$$
X^\bot_{j-1}:=F_j/ \Gamma_j,
$$
(in general, $X^\bot_{j-1}$ is not a subtorus of $X_j$). Denote by 
\begin{equation}\label{eq:ej}
\epsilon_j:= \epsilon_0(\omega; X^\bot_{j-1})
\end{equation}
the Seshadri constant at the origin of $X^\bot_{j-1}$ with respect to $\omega$.

\begin{definition}\label{de:shsha-4}  We call \eqref{eq:sequence}  an admissible sequence of $X$ if 
$$
\epsilon_j > n_j-n_{j-1}, \ \ \forall \ 1\leq j\leq n.
$$
$X$ is said to be Seshadri admissible if it possesses an admissible sequence. 
\end{definition}

\begin{theorem}\label{th:sa}
Assume that  $X$ is Seshadri admissible, then $\Gamma$ is a set of interpolation in $\mathcal F^2$.
\end{theorem}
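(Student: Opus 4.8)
I would induct on the length $k$ of the filtration \eqref{eq:sequence}, at each step peeling off the \emph{top} subtorus and combining a fibrewise interpolation with a transverse Ohsawa--Takegoshi extension. For $k=1$ the filtration is $\{0\}=X_0\subset X_1=X$ and admissibility reads $\epsilon_1=\epsilon_0(\omega;X)>n$; then by \eqref{eq:hor1} and \eqref{eq:D-beta} (valid for transcendental $\omega$ as well, see the remark there) the H\"ormander constant satisfies $\iota_\Gamma\ge\epsilon_0(\omega;\beta_0)=\epsilon_0(\omega)/n>1$ with $\beta_0=(1/n,\dots,1/n)$, and Theorem \ref{th:hor} gives the claim.

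\textbf{The fibration.} For the inductive step, write $E:=E_{k-1}$, $F:=F_k=E^{\perp}$, $m:=n_k-n_{k-1}=\dim_{\mathbb C}F$, and let $q\colon\mathbb C^n\to F$ be the orthogonal projection; then $q(\Gamma)=\Gamma_k$ and $q$ descends to a holomorphic fibre bundle $X\to X^{\perp}_{k-1}=F/\Gamma_k$ with fibre $X_{k-1}$. Thus $\Gamma$ is the disjoint union, over $b\in\Gamma_k$, of $\Gamma\cap q^{-1}(b)$, each of which is a translate of the fibre lattice $\Gamma':=E\cap\Gamma$ sitting in the affine copy $q^{-1}(b)$ of $E$. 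Moreover the sub-filtration $X_1\subset\cdots\subset X_{k-1}$ is itself an admissible sequence for $X_{k-1}$: the orthogonal complements $F_j$, the lattices $\Gamma_j$ and the Seshadri constants $\epsilon_j$ (for $j\le k-1$) in Definition \ref{de:shsha-4} are unchanged whether that construction is performed inside $E$ or inside $\mathbb C^n$. Hence, by the inductive hypothesis, $\Gamma'$ is a set of interpolation for $\mathcal F^2(\mathbb C^{n_{k-1}})$, and so is every translate of it with a uniform constant $C_0$, since translation is an isometry of the Bargmann--Fock space.

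\textbf{Fibrewise interpolation, then transverse extension.} Given $a=\{a_\gamma\}_{\gamma\in\Gamma}$ with $\sum_\gamma|a_\gamma|^2e^{-\pi|\gamma|^2}=1$, I would first interpolate on each fibre: on $q^{-1}(b)$ ($b\in\Gamma_k$) the orthogonal splitting $z=z'+b$ with $z'\in E$ gives $|z|^2=|z'|^2+|b|^2$, so, using that $\Gamma\cap q^{-1}(b)$ is a translate of $\Gamma'$, choose holomorphic $G_b$ on $q^{-1}(b)$ with $G_b(\gamma)=a_\gamma$ for all $\gamma\in\Gamma\cap q^{-1}(b)$ and $\int_{q^{-1}(b)}|G_b|^2e^{-\pi|z|^2}\le C_0\sum_{\gamma\in\Gamma\cap q^{-1}(b)}|a_\gamma|^2e^{-\pi|\gamma|^2}$; summing over $b$ gives $\sum_b\int_{q^{-1}(b)}|G_b|^2e^{-\pi|z|^2}\le C_0$. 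The $G_b$ together define a holomorphic function on the smooth closed submanifold $Z:=q^{-1}(\Gamma_k)=\bigsqcup_b q^{-1}(b)$ of pure codimension $m$, and $\Gamma\subset Z$. Next, since $\epsilon_k=\epsilon_0(\omega;X^{\perp}_{k-1})>m$, the definition of the Seshadri constant furnishes, for some $\gamma>m$, a $\Gamma_k$-invariant $\psi_B$ on $F$ with $\psi_B+\pi|w|^2$ psh, $\psi_B$ bounded above, smooth off $\Gamma_k$, and $\psi_B-\gamma\log|w|^2$ bounded near $0$ (so $e^{-\psi_B}$ is non-integrable near $0\in\mathbb C^m$ because $\gamma>m$). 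Put $\psi:=\psi_B\circ q$: it is $\Gamma$-invariant, bounded above, has a transverse log-pole of order $>m$ along each component of $Z$, and $\psi+\pi|z|^2=\bigl(\psi_B\circ q+\pi|qz|^2\bigr)+\pi|z-qz|^2$ is psh on $\mathbb C^n$. Applying the Ohsawa--Takegoshi extension theorem in codimension $m$ (see \cite{OT,BL,GZ}) with weight $\pi|z|^2+\psi$ to the section $\{G_b\}$ on $Z$ produces $F\in\mathcal O(\mathbb C^n)$ with $F|_Z=\{G_b\}$ and, using that $\psi$ is bounded above and has the correct transverse pole order, $\|F\|^2_{\mathcal F^2}\le C_1\sum_b\int_{q^{-1}(b)}|G_b|^2e^{-\pi|z|^2}\le C_1C_0=:C$. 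Since $\Gamma\subset Z$ we conclude $F(\gamma)=a_\gamma$ for all $\gamma$, which proves Theorem \ref{th:sa}.

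\textbf{Main obstacle.} The crux is the positive-codimension Ohsawa--Takegoshi step: one must verify that $\psi=\psi_B\circ q$ has precisely the transverse vanishing needed for the extension constant to be independent of $a$, and that the possible shear of $\Gamma$ (its failure to split as a product $\Gamma_k\times\Gamma'$) does no harm --- the latter is automatic because $Z$ is literally a disjoint union of parallel affine subspaces, each carrying the flat model. Secondary technical points are the simultaneous construction of $\psi_B$ with the required plurisubharmonicity, pole order and upper bound (the analogue for $X^{\perp}_{k-1}$ of the singular weights in Theorem \ref{th:OT-xu} and in the proof of Theorem \ref{th:hor}) and the uniformity of the inductive constant $C_0$; both are routine.
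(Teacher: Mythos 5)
Your proposal is correct and follows essentially the same route as the paper: both reduce to successive extensions across the filtration \eqref{eq:sequence}, at each stage viewing $\pi^{-1}(X_{j})$ as a disjoint union of parallel translates of $E_{j}$ and extending across codimension $n_j-n_{j-1}$ via an Ohsawa--Takegoshi/H\"ormander estimate whose singular weight is the pull-back, along the orthogonal projection $E_j\to F_j$, of the $\omega$-psh function with log pole of order $>n_j-n_{j-1}$ supplied by the hypothesis $\epsilon_j>n_j-n_{j-1}$. The only difference is organizational (you induct top-down on $k$, the paper iterates bottom-up from $\Gamma=\pi^{-1}(X_0)$), and unrolled the two arguments perform the identical chain of extensions.
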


\begin{proof}
Let $\pi: \mathbb C^n \to X$ be the covering map. Put
$$
\mathcal F^2_j:=\left\lbrace F \ \text{holomorphic on}\ \pi^{-1}(X_j): \int_{\pi^{-1}(X_j)} |F|^2 e^{-\pi|z|^2} <\infty \right\rbrace.
$$
It suffices to prove that each element $f$ in $\mathcal F^2_{j-1}$ extends to an element $F$ in $\mathcal F^2_j$ with $||F||\leq C_j ||f||$. Since $\pi^{-1}(X_j)$ is a disjoint union of translates of $E_j$ and
$$
\pi^{-1}(X_{j-1}) \cap E_j=\pi_j^{-1}(X_{j-1}),
$$
the above extension problem reduces to the extension from $\mathcal F^2_{j-1}|_{\pi_j^{-1}(X_{j-1})}$ to $\mathcal F^2_j|_{E_j}$. Apply the H\"ormander method, it suffices to construct an $\omega$ plurisubharmonic function with order $(n_j-n_{j-1})$ log pole along $\pi_j^{-1}(X_{j-1})$.  Now the assumption $\epsilon_j > n_j-n_{j-1}$ gives an $\omega$ plurisubharmonic $\psi_j$ with  order $n_j-n_{j-1}$ log pole at the origin of  $F_j$, the pull back of $\psi_j$ along the natural projection
$$
E_j \to F_j
$$ 
gives the function that we need.
\end{proof}

Now we shall show how to use the above theorem to give a new proof of \cite[Theorem 9]{Gr11} on the Gabor frame property for $(\Lambda, g_\Omega)$.  The setup  for \cite[Theorem 9]{Gr11} is the following:
$$
\Omega=iI_n, \ \ \Gamma_{\Omega, \Lambda^\circ} \ \text{is a complex lattice}.
$$
Based on Proposition \ref{pr:1.1}, we shall prove a similar result with a weaker assumption, i.e. we shall only assume that $ \Gamma_{\Omega, \Lambda^\circ}$ is a complex lattice. Let us write
$$
\Gamma_{\Omega, \Lambda^\circ}= A \mathbb Z[i]^n,
$$
where $A\in GL(n, \mathbb C)$. By the Iwasawa decomposition (see \cite[Proposition 26.1]{Bump}), we have
$$
A=US,
$$
where $U$ is unitary and $S$ is lower triangular with positive eigenvalues $\lambda_j$ ($U, S$ are uniquely determined by $A$, $\lambda_j^{-1}$ is equal to $\gamma_j$ in \cite[Theorem 9]{Gr11}).
Since the Euclidean metric $\omega$ is unitary invariant, one may assume that
$$
\Gamma_{\Omega, \Lambda^\circ}=S \mathbb Z[i]^n.
$$
Put
$$
E_j=\{z\in \mathbb C^n: z_1=\cdots=z_{n-j}=0\},  \ \ 1\leq j\leq n,
$$
and
$$
X_j=\pi(E_j).
$$
Then we have
$$
F_j \simeq \mathbb C, \  \ \  \Gamma_j\simeq \lambda_{n-j+1} \mathbb Z[i],
$$
and
$$
\epsilon_j=\lambda_{n-j+1}^2, \ \  1\leq j\leq n.
$$
Since $n_j=j$, we have $n_j-n_{j-1}=1$. Hence if
$$
\lambda_{n-j+1} >1, \ \ 1\leq j\leq n,
$$ 
then $X$ is Seshadri admissible. Thus Theorem \ref{th:sa} implies the following slight generalization of Gr\"ochenig's result (notice again that 
$\gamma_j =\lambda_j^{-1}$):

\begin{theorem}[Theorem 9 in \cite{Gr11}] Let $\Gamma_{\Omega, \Lambda^\circ}$ be a complex lattice. With the notation above, assume that $\lambda_j>1$ for all $1\leq j\leq n$. Then $\Gamma_{\Omega, \Lambda^\circ}$ is set of  interpolation for $\mathcal  F^2$ (and equivalently $(\Lambda, g_\Omega)$ defines a frame in $L^2(\mathbb R^n)$).
\end{theorem}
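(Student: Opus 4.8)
The plan is to deduce the theorem from the Seshadri admissibility criterion (Theorem~\ref{th:sa}) together with the dictionary between Gaussian Gabor frames and sets of interpolation for $\mathcal F^2$ (Proposition~\ref{pr:1.1}); the real content is to check that the standard coordinate flag on $X=\mathbb C^n/\Gamma$ with $\Gamma=S\mathbb Z[i]^n$ is an admissible sequence in the sense of Definition~\ref{de:shsha-4}.

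First I would normalize. Since the Euclidean K\"ahler form $\omega=\frac i2\sum_j dz_j\wedge d\bar z_j$ is invariant under the unitary group, the biholomorphism $z\mapsto U^{-1}z$ identifies $\mathbb C^n/A\mathbb Z[i]^n$ with $\mathbb C^n/S\mathbb Z[i]^n$ and preserves $\omega$; hence it preserves all Seshadri constants and the property of being a set of interpolation. So without loss of generality $\Gamma=S\mathbb Z[i]^n$ with $S$ lower triangular and positive diagonal entries $\lambda_1,\dots,\lambda_n$. Next set $E_j=\{z\in\mathbb C^n:z_1=\cdots=z_{n-j}=0\}$ and $X_j=\pi(E_j)$, where $\pi\colon\mathbb C^n\to X$ is the quotient map. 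Because $S$ is lower triangular it preserves $E_j=\mathrm{span}(e_{n-j+1},\dots,e_n)$, acting there through the lower right $j\times j$ block $S^{(j)}$, again lower triangular with diagonal $\lambda_{n-j+1},\dots,\lambda_n$; thus $\Gamma\cap E_j=S^{(j)}\mathbb Z[i]^j$ is a full lattice in $E_j$ and the $X_j$ form an increasing chain of complex subtori with $\dim_{\mathbb C}X_j=j$, i.e.\ $n_j=j$.

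For the orthogonal slices entering \eqref{eq:ej}, the orthogonal complement of $E_{j-1}$ in $E_j$ is the coordinate line $F_j=\mathbb C e_{n-j+1}$, and $\Gamma_j=F_j\cap\bigl(E_{j-1}+(\Gamma\cap E_j)\bigr)$ is determined by the $(n-j+1)$-th coordinate of the elements of $\Gamma\cap E_j$; by lower triangularity that coordinate of $S^{(j)}m$ equals $\lambda_{n-j+1}m_{n-j+1}$, so $\Gamma_j=\lambda_{n-j+1}\mathbb Z[i]$. Hence $X_{j-1}^\perp=F_j/\Gamma_j$ is a one-dimensional complex torus carrying the flat metric of total area $\int_{X_{j-1}^\perp}\omega=\lambda_{n-j+1}^2$. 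Now the key point is the value of $\epsilon_j=\epsilon_0(\omega;X_{j-1}^\perp)$: a one-dimensional complex torus has no proper positive-dimensional analytic subvariety, so Proposition~\ref{pr:hor2} applied in dimension one (where $\beta=(1)$ is the only admissible weight and the $\beta$-Seshadri constant is the classical one) gives $\epsilon_j=\int_{X_{j-1}^\perp}\omega=\lambda_{n-j+1}^2$; alternatively one writes down the flat Green function on $\mathbb C/\lambda_{n-j+1}\mathbb Z[i]$ exactly as in the proof of Theorem~\ref{th:gl}.

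Since $n_j-n_{j-1}=1$, the admissibility condition $\epsilon_j>n_j-n_{j-1}$ reads $\lambda_{n-j+1}^2>1$, i.e.\ $\lambda_{n-j+1}>1$, and as $j$ runs over $1,\dots,n$ this is precisely the hypothesis $\lambda_j>1$ for all $j$. Therefore $X$ is Seshadri admissible, Theorem~\ref{th:sa} yields that $\Gamma=\Gamma_{\Omega,\Lambda^\circ}$ is a set of interpolation for $\mathcal F^2$, and Proposition~\ref{pr:1.1} rephrases this as $(\Lambda,g_\Omega)$ being a frame in $L^2(\mathbb R^n)$. I expect the only delicate bookkeeping to be the identification $\Gamma_j=\lambda_{n-j+1}\mathbb Z[i]$ and the verification that the induced metric on $F_j/\Gamma_j$ is the flat one of area $\lambda_{n-j+1}^2$; everything else is the unitary reduction, the one-dimensional Seshadri computation, and a direct appeal to Theorem~\ref{th:sa} and Proposition~\ref{pr:1.1}.
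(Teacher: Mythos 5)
Your proposal is correct and follows essentially the same route as the paper: unitary reduction via the Iwasawa decomposition to $\Gamma=S\mathbb Z[i]^n$, the standard coordinate flag $E_j$, the identifications $\Gamma_j=\lambda_{n-j+1}\mathbb Z[i]$ and $\epsilon_j=\lambda_{n-j+1}^2$, and then Theorem~\ref{th:sa} plus Proposition~\ref{pr:1.1}. The extra bookkeeping you supply (the lower-triangularity computation of the $(n-j+1)$-th coordinate and the one-dimensional Seshadri evaluation) is exactly the detail the paper leaves implicit.
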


\section{Effective interpolation bounds}

\subsection{Interpolation bounds in terms of the Buser--Sarnak constant}

We shall use Theorem \ref{th:OT-xu} to prove the following:

\begin{theorem}\label{th:c1} Fix a lattice $\Gamma$ in $\mathbb C^n$. 
If \begin{equation}\label{eq:Bs-1}
C= \frac\pi4\cdot \inf_{0 \neq \mu \in\Gamma}|\mu|^2  > n
\end{equation}
then every sequence of complex numbers $a=\{a_\gamma\}$ with $\sum_{\gamma\in \Gamma} |a_\gamma|^2 e^{-\pi|\gamma|^2}=1$ extends to a function in $\mathcal F^2$. Moreover, 
$$
1-e^{-C} \sum_{k=0}^{n-1} \frac{C^k}{k!}\leq \inf_{F\in \mathcal F^2, \ F(\gamma)=a_\gamma, \ \forall 
\ \gamma\in\Gamma} ||F||^2  \leq \frac{M(C)}{ n! \,e^n },
$$
where
$$
M(C):=
\begin{cases}
(n+1)^{n+1} & \text{if}   \  \ C\geq n+1\\ 
C^{n+1}/(C-n) & \text{if}   \  \ n<C< n+1.
\end{cases}
$$
\end{theorem}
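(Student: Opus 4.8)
The plan is to prove the two inequalities separately; the very first assertion of the theorem (that every normalized sequence extends) will come for free from the construction used for the upper bound. Throughout write $r_0 := \tfrac12\bigl(\inf_{0\neq\mu\in\Gamma}|\mu|^2\bigr)^{1/2}$, so that $C=\pi r_0^2$ and the balls $B(\gamma,r_0)$, $\gamma\in\Gamma$, have pairwise disjoint interiors.

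\emph{Lower bound.} First I would record the elementary Gaussian-integral identity
$$
\int_{B(0,r_0)} e^{-\pi|z|^2}\,dm(z)=\frac1{(n-1)!}\int_0^{\pi r_0^2} t^{n-1}e^{-t}\,dt=1-e^{-C}\sum_{k=0}^{n-1}\frac{C^k}{k!},
$$
obtained by passing to polar coordinates in $\mathbb C^n=\mathbb R^{2n}$ and integrating by parts. Then, for any $F\in\mathcal F^2$ with $F(\gamma)=a_\gamma$, set $g_\gamma(z):=F(z)e^{-\pi z\cdot\bar\gamma}e^{\pi|\gamma|^2/2}$, which is entire and satisfies $|F(z)|^2e^{-\pi|z|^2}=|g_\gamma(z)|^2e^{-\pi|z-\gamma|^2}$ and $|g_\gamma(\gamma)|^2=|a_\gamma|^2e^{-\pi|\gamma|^2}$. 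Since $|g_\gamma|^2$ is plurisubharmonic, the sub-mean value inequality over spheres integrated against the radial weight $e^{-\pi\rho^2}$ gives
$$
\int_{B(\gamma,r_0)}|F|^2e^{-\pi|z|^2}\,dm=\int_{B(0,r_0)}|g_\gamma(\gamma+\zeta)|^2e^{-\pi|\zeta|^2}\,dm(\zeta)\ \ge\ |a_\gamma|^2e^{-\pi|\gamma|^2}\int_{B(0,r_0)}e^{-\pi|\zeta|^2}\,dm(\zeta).
$$
Summing over the disjoint balls and using $\sum_\gamma|a_\gamma|^2e^{-\pi|\gamma|^2}=1$ yields $\|F\|^2\ge 1-e^{-C}\sum_{k=0}^{n-1}C^k/k!$, hence the same bound for the infimum.

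\emph{Upper bound and existence.} The idea is to feed the Berndtsson--Lempert Ohsawa--Takegoshi theorem (Theorem \ref{th:OT-xu}) the extremal $\Gamma$-periodic weight modelled on the function $\psi_r$ from the proof of Theorem \ref{th:MP}. For $r\le r_0$ put $\psi(z):=\sum_{\lambda\in\Gamma}\bigl(\psi_r(z-\lambda)-\pi|z-\lambda|^2\bigr)$, where $\psi_r(w)=\pi r^2(\log(|w|^2/r^2)+1)$ for $|w|<r$ and $\psi_r(w)=\pi|w|^2$ for $|w|\ge r$. Each summand is supported in $\overline{B(\lambda,r)}$, so (for $r<r_0$) the sum is locally finite; $\psi$ is $\Gamma$-invariant, nonpositive, and — after a harmless smoothing of $\psi_r$ across the sphere $|w|=r$ that keeps it plurisubharmonic and $\le\pi|w|^2$ — smooth off $\Gamma$. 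Since $\pi|z|^2-\pi|z-\lambda|^2$ is pluriharmonic, $\psi+\pi|z|^2$ equals $\pi|z|^2+(\text{pluriharmonic})+\psi_r(z-\lambda)$ inside $B(\lambda,r)$ and $\pi|z|^2$ elsewhere, hence is plurisubharmonic on $\mathbb C^n$. Near $0$ one has $\psi(z)=\pi r^2\log|z|^2+\pi r^2(1-\log r^2)-\pi|z|^2$, so $\psi-\gamma\log|z|^2$ is bounded with $\gamma:=\pi r^2$ and $\rho=\pi r^2(1-\log r^2)$. Because $C=\pi r_0^2>n$ we may take $\gamma\in(n,C]$ (or $\gamma<C$ and then pass to the infimum over extensions); Theorem \ref{th:OT-xu} produces $F\in\mathcal F^2$ with $F(\gamma')=a_{\gamma'}$ and, using $e^{-\frac n\gamma\rho}=e^{-n}r^{2n}$ and $\pi r^2=\gamma$,
$$
\|F\|^2\ \le\ \Bigl(1-\tfrac n\gamma\Bigr)^{-1}\frac{\pi^n}{n!}\,e^{-\frac n\gamma\rho}\ =\ \frac{\gamma^{n+1}}{(\gamma-n)\,n!\,e^{n}}.
$$
Minimising $\gamma\mapsto\gamma^{n+1}/(\gamma-n)$ over $(n,C]$ — the minimum is at $\gamma=n+1$ when $C\ge n+1$ and at $\gamma=C$ when $n<C<n+1$ — gives $\inf_F\|F\|^2\le M(C)/(n!\,e^n)$, and the existence claim is part of the output of Theorem \ref{th:OT-xu}.

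\emph{Where the work is.} No step is deep, but the crux is the construction of the periodic weight $\psi$ and verifying that it meets \emph{every} hypothesis of Theorem \ref{th:OT-xu} simultaneously: $\Gamma$-invariance, nonpositivity, plurisubharmonicity of $\psi+\pi|z|^2$, smoothness off $\Gamma$, and — most importantly for sharpness — the log pole of weight $\gamma=\pi r^2$ together with the \emph{extremal} value $\rho=\pi r^2(1-\log r^2)$ (any cruder model would lose the factor that produces the clean $M(C)$). Once $\psi$ is in place the upper bound is a one-line substitution followed by a calculus optimisation, and the lower bound is the disjoint-ball plus sub-mean-value estimate above.
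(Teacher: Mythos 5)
Your argument is correct and follows essentially the same route as the paper: the lower bound via the disjoint balls $\{\pi|z-\gamma|^2<C\}$ and the pointwise estimate $\int_{B(\gamma,r_0)}|F|^2e^{-\pi|z|^2}\geq |a_\gamma|^2e^{-\pi|\gamma|^2}\int_{B(0,r_0)}e^{-\pi|\zeta|^2}$ (your sub-mean-value phrasing is equivalent to the paper's orthogonal Taylor expansion), and the upper bound by feeding Theorem \ref{th:OT-xu} the very same periodized weight (the paper's $\tfrac{\pi}{\delta}\psi_\delta$ is your $\psi_r-\pi|\cdot|^2$ under $\delta=1/r^2$, $\gamma=\pi/\delta=\pi r^2$) and optimizing $\gamma^{n+1}/(\gamma-n)$ over $(n,C]$. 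No gaps; your remark about smoothing across $|w|=r$ addresses a regularity point the paper also glosses over.
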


\begin{proof}[Proof of the lower bound] Notice that (by an induction on $n$)
$$
1-e^{-C} \sum_{k=0}^{n-1} \frac{C^k}{k!}= \int_{0}^C e^{-t} \frac{t^{n-1}}{(n-1)!}\, dt
$$
and 
$$
\{z\in \mathbb C^n: \pi |z-\gamma|^2<C\} \cap \{z\in \mathbb C^n: \pi |z-\gamma'|^2<C\}=\emptyset, \ \ \forall \ \gamma\neq \gamma' \in \Gamma,
$$
It suffices to show that
\begin{equation}\label{eq:lower-1}
\int_{\{\pi |z-\gamma|^2<C\}} |F(z)|^2e^{-\pi|z|^2} \geq |F(\gamma)|^2 e^{-\pi|\gamma|^2}\int_{0}^C e^{-t} \frac{t^{n-1}}{(n-1)!}\, dt.
\end{equation}
Notice that
$$
\int_{\{\pi |z-\gamma|^2<C\}} |F(z)|^2e^{-\pi|z|^2}=e^{-\pi|\gamma|^2}\int_{\{\pi |w|^2<C\}} |F(w+\gamma)e^{-\pi w^T\gamma}|^2e^{-\pi|w|^2},
$$
The main observation is that the Taylor expansion
$$
G(w):=F(w+\gamma)e^{-\pi w^T\gamma}= F(\gamma)+\sum c_\alpha w^\alpha,
$$
is now an orthogonal decomposition, i.e.
\begin{align*}
\int_{\{\pi |w|^2<C\}} |G(w)|^2e^{-\pi|w|^2} & =|F(\gamma)|^2 \int_{\{\pi |w|^2<C\}} e^{-\pi|w|^2} \\
& +\sum |c_\alpha|^2 \int_{\{\pi |w|^2<C\}} |w^\alpha|^2e^{-\pi|w|^2},
\end{align*}
from which we know that
$$
\int_{\{\pi |z-\gamma|^2<C\}} |F(z)|^2e^{-\pi|z|^2} \geq |F(\gamma)|^2 e^{-\pi|\gamma|^2}\int_{\{\pi |w|^2<C\}} e^{-\pi|w|^2}.
$$
Now put $t=\pi|\gamma|^2$, we have
$$
\int_{\{\pi |w|^2<C\}} e^{-\pi|w|^2}= \int_0^C e^{-t}d\, \frac{t^n}{n!} =\int_{0}^C e^{-t} \frac{t^{n-1}}{(n-1)!}\, dt,
$$
which gives \eqref{eq:lower-1}.
\end{proof}

\begin{proof}[Proof of the upper bound] The main idea is to use Theorem \ref{th:OT-xu}. The definition of $C$ implies that
$$
B:=\{z\in \mathbb C^n: \pi |z|^2<C\}
$$
is embedded ball in $X:=\mathbb C^n /\Gamma$. For 
$$
\frac\pi C\leq  \delta < \frac\pi n,
$$
put (notice that $\{\delta|z|^2 <1\} \subset B$)
$$
\psi_\delta(z):=
\begin{cases}
\log(\delta|z|^2)+(1-\delta|z|^2)  & \  \ \delta|z|^2 <1 \\
0 &    z\in X\setminus \{\delta|z|^2 <1\}.
\end{cases}
$$
Then we have
$$
i\partial\dbar \psi_\delta \geq  -\delta\cdot i\partial\dbar |z|^2.
$$
Denote by $\psi$ the pull back to $\mathbb C^n$ of $ \pi \psi_\delta/\delta$. Apply Theorem \ref{th:OT-xu} to $\psi$, we get
$$
\inf_{F\in \mathcal F^2, \ F(\gamma)=a_\gamma, \ \forall 
\ \gamma\in\Gamma} ||F||^2  \leq  \left(1-\frac{n\delta}{\pi}\right)^{-1} \frac{\pi^n}{n!}  e^{-n(1+\log \delta)}.
$$
Put
$$
x:=\frac{n\delta}{\pi} ,
$$
then $n/C\leq x<1$ and
$$
\left(1-\frac{n\delta}{\pi}\right)^{-1} \frac{\pi^n}{n!}  e^{-n(1+\log \delta)} =\frac{n^n}{(1-x)x^n} \cdot \frac1{n!\,e^n}.
$$
Thus the upper bound follows from
$$
\inf_{n/C\leq x<1}  \frac{n^n}{(1-x)x^n}  =M(C).
$$
The proof of Theorem B is now  complete.
\end{proof}

\subsection{Interpolation bounds in terms of the Robin constant}

In this subsection, we shall generalize Theorem \ref{th:c1} to the case that $\epsilon_0(\omega)>n$.  The main idea is to consider the following \emph{envelope with prescribed singularity}
\begin{align*}
\psi_a^\pi:=  \sup \{\psi_0\leq 0: \ \ & 2\pi\omega+i\partial\dbar \psi_0 \geq 0, \ \psi_0  \ \text{has an isolated order} \ a \\
& \text{log pole at the origin} \}
\end{align*}
on the torus $X=\mathbb C^n/\Gamma$. Denote by $\psi_a$ the pull back to $\mathbb C^n$ of $\psi_a^\pi$. 

\begin{definition}\label{de:Robin} We call $\psi_a$ the $a$-envelope function on $\mathbb C^n$ associated to the lattice $\Gamma$ and
$$
\rho_a:=\liminf_{z\to 0} \psi_a(z)-a \log|z|^2.
$$
the $a$-Robin constant of $\Gamma$.
\end{definition}

We have the following generalization of Theorem \ref{th:c1} and Theorem B.

\begin{theorem}\label{th:c2}Assume that $\epsilon:=\epsilon_0(\omega)>n$. Put\begin{equation}\label{eq:Bs-2}
C= \frac\pi4\cdot \inf_{0 \neq \mu \in\Gamma}|\mu|^2.
\end{equation}
Then every sequence of complex numbers $c=\{c_\gamma\}$ with $\sum_{\gamma\in \Gamma} |c_\gamma|^2 e^{-\pi|\gamma|^2}=1$ extends to a function in $\mathcal F^2$. Moreover, 
$$
1-e^{-C} \sum_{k=0}^{n-1} \frac{C^k}{k!}\leq \inf_{F\in \mathcal F^2, \ F(\gamma)=c_\gamma, \ \forall 
\ \gamma\in\Gamma} ||F||^2  \leq \inf_{n<a<\epsilon}\left(1-\frac na\right)^{-1} \frac{\pi^n}{n!}\,  e^{-n\rho_a/a}.
$$
\end{theorem}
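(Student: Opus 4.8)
\smallskip
\noindent\textbf{Sketch of the intended argument.} The plan is to prove the two inequalities by unrelated means. The lower bound is an exact repetition of the lower-bound computation in the proof of Theorem \ref{th:c1} and uses only that $C>0$; the upper bound, together with the statement that every admissible sequence extends, is obtained by feeding the $a$-envelope function $\psi_a$ into the Ohsawa--Takegoshi--type estimate of Theorem \ref{th:OT-xu} and then optimising over $a\in(n,\epsilon)$.

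\smallskip
For the lower bound, fix any $F\in\mathcal F^2$ with $F(\gamma)=c_\gamma$ for all $\gamma\in\Gamma$. Since $C=\tfrac\pi4\inf_{0\neq\mu\in\Gamma}|\mu|^2$, the balls $\{z:\pi|z-\gamma|^2<C\}$, $\gamma\in\Gamma$, are pairwise disjoint. Translating by $\gamma$ and expanding $w\mapsto F(w+\gamma)e^{-\pi w^T\gamma}$ in a Taylor series, which is orthogonal for the Gaussian-weighted $L^2$ inner product on a ball centred at the origin, one gets
\[
\int_{\pi|z-\gamma|^2<C}|F|^2e^{-\pi|z|^2}\ \ge\ |c_\gamma|^2e^{-\pi|\gamma|^2}\int_{\pi|w|^2<C}e^{-\pi|w|^2}
\ =\ |c_\gamma|^2e^{-\pi|\gamma|^2}\Bigl(1-e^{-C}{\textstyle\sum_{k=0}^{n-1}}\tfrac{C^k}{k!}\Bigr),
\]
the last equality being the elementary identity already used in Theorem \ref{th:c1}. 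Summing over $\gamma$, using disjointness and $\sum_\gamma|c_\gamma|^2e^{-\pi|\gamma|^2}=1$, yields the lower bound.

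\smallskip
For the upper bound, fix $a$ with $n<a<\epsilon=\epsilon_0(\omega)$. By definition of $\epsilon_0(\omega)$ there is an $\omega$-psh function on $X$ with an isolated order-$a$ log pole at the origin, so (after normalising it to be $\le0$) the family defining the envelope $\psi_a^\pi$ is non-empty; hence $\psi_a^\pi$ is a genuine $\omega$-psh function on $X$, $\psi_a^\pi\le0$, and, being the largest such competitor, it carries an isolated order-$a$ log pole at the origin with the largest admissible residual constant, so the $a$-Robin constant $\rho_a$ is the relevant finite invariant (this is where $a<\epsilon$ is used). Pulling back to $\mathbb C^n$, the function $\psi_a$ is $\Gamma$-invariant, non-positive, satisfies $i\partial\dbar(\psi_a+\pi|z|^2)\ge0$, and has order-$a$ log poles along $\Gamma$ with $a>n$; Theorem \ref{th:OT-xu} applied with $\gamma=a$, $\rho=\rho_a$ then produces, for each admissible sequence, an interpolant $F\in\mathcal F^2$ with $\|F\|^2\le(1-n/a)^{-1}\tfrac{\pi^n}{n!}e^{-n\rho_a/a}$. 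Taking the infimum over $a\in(n,\epsilon)$ gives the stated upper bound and, en route, the existence of an interpolant. As a consistency check, restricting the infimum to $a\le C$ and using the explicit competitor $\tfrac\pi\delta(\log(\delta|z|^2)+1-\delta|z|^2)$ with $a=\pi/\delta$ gives $\rho_a\ge a(1+\log(\pi/a))$, which already recovers the bound $M(C)/(n!\,e^n)$ of Theorem \ref{th:c1}; the envelope does at least as well.

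\smallskip
The main obstacle is the regularity of the envelope $\psi_a$ and the finiteness of $\rho_a$. Theorem \ref{th:OT-xu} is stated for weights that are smooth off $\Gamma$, whereas $\psi_a$ is only defined as a supremum and is a priori merely $\omega$-psh; one must either invoke the known regularity of envelopes with prescribed singularities (continuity, or $C^{1,1}$-type bounds, away from the pole), or observe that the Berndtsson--Lempert estimate underlying Theorem \ref{th:OT-xu} applies to $\psi_a$ directly, since $i\partial\dbar(\tfrac na\psi_a+\pi|z|^2)\ge(1-\tfrac na)\,i\partial\dbar(\pi|z|^2)>0$ holds in the sense of currents and $\tfrac na\psi_a$ has Lelong number $n$ along $\Gamma$; failing that, one approximates $\psi_a$ from above by smoother $\omega$-psh weights in the manner of Demailly, applies the estimate to the approximants with slightly perturbed parameters, and extracts a weak $\mathcal F^2$-limit, pointwise convergence at the lattice points being automatic from the reproducing kernel. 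One must also check that the Lelong number of $\psi_a$ at the origin is exactly $a$, so that $\rho_a>-\infty$ and the bound is not vacuous; this is again part of the prescribed-singularity envelope theory and is valid because $a<\epsilon$.
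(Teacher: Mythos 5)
Your proposal is correct and follows essentially the same route as the paper, whose proof of this theorem is literally the two sentences ``the proof of the lower bound is the same'' (as in Theorem \ref{th:c1}) and ``for the upper bound, apply Theorem \ref{th:OT-xu} to $\psi=\psi_a$.'' Your additional discussion of the regularity of the envelope $\psi_a$ and the finiteness of $\rho_a$ addresses a genuine point that the paper's one-line proof glosses over, and your proposed remedies (envelope regularity or Demailly-type approximation from above) are the standard and appropriate ways to close it.
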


\begin{proof} The proof of the lower bound is the same. For the upper bound, it suffices to apply Theorem \ref{th:OT-xu} to $\psi=\psi_a$.
\end{proof}

\noindent
\textbf{Remark.}  \emph{In case $n=1$, we know that
$$
\epsilon=\int_{\mathbb C/\Gamma} \omega.
$$
Moreover, $\psi_\epsilon^\pi$ is also well defined. In fact, we have the following}

\begin{proposition} $\psi_\epsilon^\pi$ is equal to the unique solution, say $\psi$,  of 
$$
i\partial\dbar \psi +2\pi \omega=\epsilon\cdot i\partial\dbar \log|z|^2, \ \  \ \ \sup_{X} \psi=0,
$$
on $X=\mathbb C/\Gamma$.
\end{proposition}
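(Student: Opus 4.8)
The plan is to prove the two inequalities $\psi\leq\psi_\epsilon^\pi$ and $\psi_\epsilon^\pi\leq\psi$ separately, using only the solvability of a scalar Poisson-type equation on the compact Riemann surface $X=\mathbb C/\Gamma$ together with the maximum principle. Write $\epsilon:=\epsilon_0(\omega)$, which by the remark preceding the statement equals $\int_{\mathbb C/\Gamma}\omega$ since a one-dimensional complex torus has no proper positive-dimensional analytic subvariety. The current $i\partial\dbar\log|z|^2$ on $X$ is a positive multiple of the Dirac mass at the origin, and the numerical identity $\epsilon\int_X i\partial\dbar\log|z|^2=2\pi\int_X\omega$ (equivalently $\epsilon=\int_X\omega$) is exactly the condition under which the equation
\[
i\partial\dbar\psi+2\pi\omega=\epsilon\cdot i\partial\dbar\log|z|^2 \quad\text{on }X
\]
is solvable; granting it, a solution exists, is unique up to an additive constant (the difference of two solutions is harmonic on the compact $X$, hence constant), and the normalization $\sup_X\psi=0$ pins it down. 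Near the origin $\psi-\epsilon\log|z|^2$ solves $i\partial\dbar(\psi-\epsilon\log|z|^2)=-2\pi\omega$ with smooth right-hand side, so by elliptic regularity it extends smoothly across $0$ (and $\psi$ is smooth on $X\setminus\{0\}$ for the same reason); hence $\psi$ has an isolated order-$\epsilon$ log pole at the origin, $\psi\leq 0$, and $2\pi\omega+i\partial\dbar\psi=\epsilon\cdot i\partial\dbar\log|z|^2\geq 0$. Thus $\psi$ is an admissible competitor in the envelope defining $\psi_\epsilon^\pi$, which gives $\psi\leq\psi_\epsilon^\pi$ (and in particular shows $\psi_\epsilon^\pi$ is not degenerate).

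For the opposite inequality I would take an arbitrary competitor $\psi_0\leq 0$ with $2\pi\omega+i\partial\dbar\psi_0\geq 0$ having an isolated order-$\epsilon$ log pole at the origin, so that $\psi_0-\epsilon\log|z|^2$ is bounded above near $0$. Set $u:=\psi_0-\psi$ on $X\setminus\{0\}$, where both functions are finite (their pole sets are the single point $0$ modulo $\Gamma$). On $X\setminus\{0\}$ one has $2\pi\omega+i\partial\dbar\psi=0$, so $i\partial\dbar u=2\pi\omega+i\partial\dbar\psi_0\geq 0$, i.e. $u$ is subharmonic there; and since $\psi_0-\epsilon\log|z|^2$ is bounded above near $0$ while $\psi-\epsilon\log|z|^2$ is bounded on both sides, $u$ is bounded above near $0$. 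A subharmonic function on a punctured disc that is bounded above near the puncture extends across it, so $u$ extends to a subharmonic function on the compact Riemann surface $X$, which must therefore be constant, say $u\equiv c$. Evaluating at a point $p\neq 0$ where $\psi$ attains its maximum $0$ gives $c=u(p)=\psi_0(p)\leq 0$, hence $\psi_0=\psi+c\leq\psi$. Taking the supremum over all competitors $\psi_0$ yields $\psi_\epsilon^\pi\leq\psi$, and combined with the first step $\psi_\epsilon^\pi=\psi$.

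The genuinely load-bearing point is the second step: one must be sure that membership in the competitor class forces the singularity of $\psi_0$ to be no heavier than $\epsilon\log|z|^2+O(1)$ near $0$, so that $u$ is really bounded above and the removable-singularity theorem for subharmonic functions applies; this is automatic here, since $\epsilon$ is already the critical Seshadri value, but it is the spot where the argument could go wrong if one were careless about the meaning of ``order-$\epsilon$ log pole''. Everything else --- solvability of the Poisson equation on a torus, uniqueness by the maximum principle, and the smoothing of $\psi-\epsilon\log|z|^2$ across the origin --- is elementary one-variable potential theory, which is exactly why the statement is confined to $n=1$: in higher dimensions the analogue of this equation is a degenerate complex Monge--Amp\`ere equation and $\psi_\epsilon^\pi$ need no longer admit such a clean closed form.
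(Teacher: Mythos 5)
Your proof is correct and follows essentially the same route as the paper: existence and uniqueness of $\psi$ from the mass balance $\int_X 2\pi\omega=2\pi\epsilon=\int_X\epsilon\cdot i\partial\dbar\log|z|^2$, one inequality by exhibiting $\psi$ as an admissible competitor, and the other by a subharmonicity-plus-maximum-principle argument showing a difference is constant on the compact torus and nonpositive at a maximum point of $\psi$. The only (harmless) variation is that you run the second step competitor-by-competitor on $\psi_0-\psi$ rather than directly on $\psi_\epsilon^\pi-\psi$ as the paper does, which if anything sidesteps having to verify that the upper envelope itself lies in the competitor class.
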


\begin{proof} Since
$$
\int_X i\partial\dbar \psi +2\pi \omega = \int_X 2\pi \omega =2\pi \epsilon= \int_X\epsilon\cdot i\partial\dbar \log|z|^2,
$$
from the Hodge theory, we know that up to a constant there exists a unique solution $\psi$ such that
$$
i\partial\dbar \psi +2\pi \omega=\epsilon\cdot i\partial\dbar \log|z|^2.
$$
Thus if we assume further that $\sup_X \psi=0$ then $\psi$ is unique. Moreover, we know that $\psi$ is smooth outside the origin and $\psi-\epsilon \log|z|^2$ is smooth near the origin, thus 
$$
\psi \leq \psi_\epsilon^\pi.
$$ 
On the other hand, notice that $\psi_\epsilon^\pi -\psi$ is subharmonic, thus $\psi_\epsilon^\pi -\psi$ is equal to a constant, say $A$.  Take $z_0$ such that $\psi(z_0)=0$, then
$$
A=\psi_\epsilon^\pi(z_0) -\psi(z_0) =\psi_\epsilon^\pi(z_0) \leq 0,
$$
which gives
$$
\psi \geq \psi_\epsilon^\pi.
$$
Hence $\psi = \psi_\epsilon^\pi$.
\end{proof}

\noindent
\textbf{Remark.}  \emph{The above Proposition implies that $\psi_\epsilon^\pi$ is equal to the \emph{Arakelov Green function} up to a \emph{non-zero} constant (see \cite[page 393 and 417]{Faltings}).}

\subsection{Faltings' identity for the Robin constant}\label{se:5.3}

Notice that
$$
\int_X \omega/\epsilon=1,
$$
we know that $[\omega/\epsilon]$ is the Chern class of the line bundle, $L=[0]$, over $X$. Choose a metric $h$ on $L$ such that the Chern curvature satisfies
$$
i\Theta(L, h)=2\pi \omega/\epsilon.
$$
Denote by $s$ the canonical section of $[0]$, then we know that
$$
i\partial\dbar \log|s|^2_h= i\partial\dbar \log|z|^2- 2\pi \omega/\epsilon,
$$
which implies that
\begin{equation}\label{eq:Green}
\epsilon \log|s|^2_h-\sup_X \epsilon \log|s|^2_h= \psi_\epsilon^\pi.
\end{equation}
The pull back of the line bundle $[0]$ to $\mathbb C$ is a trivial line bundle, thus one may identify the pull back  to $\mathbb C$ of $s$ with a holomorphic function, say $f^s(z)$ on $\mathbb C$. In case $\Gamma={\rm Span}_{\mathbb Z} \{1, \tau\}$, ${\rm Im}\, \tau=\epsilon$, (up to a constant) we have
$$
f^s(z)=\vartheta\left(z+\frac12+\frac \tau2; \tau\right),
$$ 
(notice that $\vartheta\left(\frac12+\frac \tau2; \tau\right)=0$) where
$$
\vartheta(z; \tau)=:\sum_{n\in\mathbb Z} e^{2\pi i nz} e^{\pi i n^2 \tau}
$$
is known as the \emph{Jacobi theta function}. The following formula for the Robin constant is based on the Faltings' \emph{theta metric} $||\theta||$ (see \cite[page 403, 413 and 416]{Faltings} or the function $U$ below).

\begin{proposition}\label{pr:faltings} Assume further that $\Gamma={\rm Span}_{\mathbb Z} \{1, \tau\}$, ${\rm Im}\, \tau=\epsilon$. Denote by $\psi_\epsilon$ the pull back to $\mathbb C$ of $\psi_\epsilon^\pi$, then 
\begin{equation}\label{eq:fal-1}
\frac{\psi_\epsilon}\epsilon= \log U- \sup_{\mathbb C} \log U, \ \ \ U(z):=  \big|\vartheta\left(z+\frac12+\frac \tau2; \tau\right)e^{-\pi ({\rm Im} \, z +\frac\epsilon2)^2/\epsilon}\big|^2,
\end{equation}
and the $\epsilon$-Robin constant of $\Lambda$ defined by 
$$
\rho_\epsilon:=\liminf_{z\to 0} \psi_\epsilon(z)-\epsilon \log|z|^2
$$
satisfies
\begin{equation}\label{eq:fal-2}
\frac{\rho_\epsilon}\epsilon=2 \log(2\pi)+6\log|\eta(\tau)|- \sup_{\mathbb C} \log U,
\end{equation}
where $\eta(\tau)$ is the Dedekind eta function defined by $
\eta(\tau):=e^{\pi i\tau/12} \Pi_{n=1}^\infty (1-e^{2\pi i n \tau})$.
\end{proposition}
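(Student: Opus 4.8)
The plan is to reduce everything to the identity \eqref{eq:Green} proved just above: once we recognise the function $U$ appearing in \eqref{eq:fal-1} as a positive constant multiple of the pull-back to $\mathbb C$ of $|s|^2_h$, where $s$ is the canonical section of $L=[0]$ and $h$ is the chosen metric with $i\Theta(L,h)=2\pi\omega/\epsilon$, the formula \eqref{eq:fal-1} is immediate, and \eqref{eq:fal-2} is obtained by reading off the leading Taylor coefficient of $U$ at the origin. So the first task is to identify the metric, and the second is a computation with the Jacobi $\vartheta$-series.

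For the identification I would check directly that
$$
h_0(z):=\big|\vartheta\big(z+\tfrac12+\tfrac\tau2;\tau\big)\big|^2\,e^{-2\pi(\Im z+\epsilon/2)^2/\epsilon}=U(z)
$$
is $\Gamma$-invariant with Chern curvature $2\pi\omega/\epsilon$ away from $\Gamma$. Invariance under $z\mapsto z+1$ is clear; invariance under $z\mapsto z+\tau$ follows from the quasi-periodicity $\vartheta(w+\tau;\tau)=e^{-\pi i\tau-2\pi i w}\vartheta(w;\tau)$ (with $w=z+\tfrac12+\tfrac\tau2$), whose modulus is exactly cancelled by the shifted Gaussian weight. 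Since $\Im z=(z-\bar z)/(2i)$ is pluriharmonic, $i\partial\dbar(\Im z+\epsilon/2)^2=i\partial\dbar(\Im z)^2=\omega$, so on $\mathbb C\setminus\Gamma$ one gets $-i\partial\dbar\log h_0=i\partial\dbar\!\big(2\pi(\Im z+\epsilon/2)^2/\epsilon\big)=2\pi\omega/\epsilon$, while the $\vartheta$-factor produces the required simple zero along $\Gamma$. Because a Hermitian metric on the degree-one line bundle $[0]$ is determined by its curvature up to a positive constant, $h_0$ and the pull-back of $|s|^2_h$ coincide up to such a constant, which cancels after subtracting the supremum. Hence \eqref{eq:Green} yields $\psi_\epsilon=\epsilon\log U-\sup_{\mathbb C}\epsilon\log U$, i.e. \eqref{eq:fal-1}; here $\sup_{\mathbb C}\log U=\sup_X\log U$ is finite and attained since $U$ is continuous, $\Gamma$-periodic, and vanishes exactly on $\Gamma$, and the fact that $\psi_\epsilon^\pi$ is the unique $\sup=0$ solution of $i\partial\dbar\psi+2\pi\omega=\epsilon\,i\partial\dbar\log|z|^2$ is the preceding proposition.

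For \eqref{eq:fal-2}, substituting \eqref{eq:fal-1} gives $\rho_\epsilon/\epsilon=\liminf_{z\to0}\big(\log U(z)-\log|z|^2\big)-\sup_{\mathbb C}\log U$, so it remains to compute $\lim_{z\to0}U(z)/|z|^2=|c(\tau)|^2e^{-\pi\epsilon/2}$, where $c(\tau):=\partial_w\vartheta(w;\tau)\big|_{w=\frac12+\frac\tau2}$ and $e^{-\pi\epsilon/2}$ is the value of the Gaussian at $z=0$. Differentiating the series $\vartheta(w;\tau)=\sum_n e^{\pi i n^2\tau}e^{2\pi i n w}$, substituting $w=\tfrac12+\tfrac\tau2$ so that $e^{2\pi i n w}=(-1)^n e^{\pi i n\tau}$, and symmetrising $n\leftrightarrow-(n+1)$ gives $c(\tau)=2\pi i\sum_{n\ge0}(-1)^n(2n+1)\,e^{\pi i n(n+1)\tau}$; by Jacobi's triple-product identity $\sum_{n\ge0}(-1)^n(2n+1)x^{n(n+1)/2}=\prod_{m\ge1}(1-x^m)^3$ with $x=e^{2\pi i\tau}$, together with $\eta(\tau)^3=e^{\pi i\tau/4}\prod_{m\ge1}(1-e^{2\pi i m\tau})^3$ (equivalently Jacobi's derivative formula $\vartheta_1'(0;\tau)=2\pi\eta(\tau)^3$), this equals $2\pi i\,e^{-\pi i\tau/4}\eta(\tau)^3$. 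Therefore $|c(\tau)|^2e^{-\pi\epsilon/2}=4\pi^2 e^{\pi\epsilon/2}|\eta(\tau)|^6 e^{-\pi\epsilon/2}=(2\pi)^2|\eta(\tau)|^6$, so $\liminf_{z\to0}\big(\log U-\log|z|^2\big)=2\log(2\pi)+6\log|\eta(\tau)|$ and \eqref{eq:fal-2} follows.

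The genuinely non-routine inputs are the two classical identities just invoked (the triple-product expansion of $\eta^3$, equivalently Jacobi's derivative formula) and the envelope characterisation of $\psi_\epsilon^\pi$ from the preceding proposition; the rest is bookkeeping. I expect the only real care to be needed in tracking the exponential prefactors in the $\vartheta$-transformation laws and in keeping the two supremum normalisations (on $X$ and on $\mathbb C$) consistent when applying \eqref{eq:Green}, so that the additive constant relating $h_0$ to $|s|^2_h$ cancels correctly.
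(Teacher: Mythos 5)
Your proof is correct and follows essentially the same route as the paper: you verify that $U$ is $\Gamma$-invariant with $i\partial\dbar\log U=-2\pi\omega/\epsilon$ plus the unit log-pole current along $\Gamma$, so that \eqref{eq:fal-1} follows from the uniqueness statement in the preceding proposition, and you then reduce \eqref{eq:fal-2} to the evaluation $\lim_{z\to 0}U(z)/|z|^2=(2\pi)^2|\eta(\tau)|^6$. The only (immaterial) difference is in how that last classical identity is justified: you expand $\partial_w\vartheta$ at $\tfrac12+\tfrac\tau2$ and invoke the Jacobi triple product, whereas the paper compares two weight-$12$ cusp forms with the same leading term.
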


\begin{proof} One may verify that $U$ is $\Gamma$-invariant and
$$
i\partial\dbar \log U= -2\pi \omega/\epsilon+\sum_{\gamma\in \Gamma}   i\partial\dbar \log|z+\gamma|^2,
$$ 
thus \eqref{eq:fal-1} follows. To prove \eqref{eq:fal-2}, it suffices to show that
$$
\lim_{z\to 0} U(z)/|z|^2 = (2\pi)^2 \cdot |\eta(\tau)|^6
$$
or equivalently
$$
\big|\frac{\partial \vartheta}{\partial z} \left( \frac12+\frac \tau2; \tau \right) e^{-\pi\epsilon/4} \big|^2 =(2\pi)^2 \cdot |\eta(\tau)|^6,
$$
which follows from
$$
\left(\frac{\partial \vartheta}{\partial z} \left( \frac12+\frac \tau2; \tau \right) e^{\pi i \tau/4}\right)^8=(2\pi)^8 \cdot \eta(\tau)^{24}
$$
(since both sides are cusp forms of degree 12 with the same leading term).
\end{proof}

\noindent
\textbf{Remark.}  \emph{Notice that 
$$
\sup_{\mathbb C} \log U =2\cdot \sup_{z\in \mathbb C}   \log \big|\vartheta\left(z; \tau\right)e^{-\pi ({\rm Im} \, z)^2/\epsilon}\big|
$$ 
and
$$
\sup_{z\in \mathbb C}   \log \big|\vartheta\left(z; \tau\right)e^{-\pi ({\rm Im} \, z)^2/\epsilon}\big| =\sup_{t\in\mathbb R}
\phi(t)- \pi t^2/\epsilon, \ \ \ \phi(t):=\sup_{{\rm Im}\, z=t}  \log |\vartheta(z; \tau)|.
$$
Since $\vartheta$ is an even function of $z$ and depends only on $e^{2\pi iz}$, we know that
$$
\phi(t)=\phi(-t)
$$
is an even convex function of $t$. Moreover, since $|\vartheta\left(z; \tau\right)e^{-\pi ({\rm Im} \, z)^2/\epsilon}\big|$ is $\Gamma$ invariant, we have
$$
\sup_{z\in \mathbb C}   \log \big|\vartheta\left(z; \tau\right)e^{-\pi ({\rm Im} \, z)^2/\epsilon}\big| =\sup_{0<t<\epsilon}
\phi(t)- \pi t^2/\epsilon
$$
and
$$
\phi(t)=\sup_{0<x<1} \log|\sum_{n\in\mathbb Z} e^{2\pi i n(x+it)} e^{\pi i n^2 \tau}|.
$$
Notice that
$$
e^{\phi(t)} \leq \sum_{n\in\mathbb Z} e^{-2\pi nt} e^{-\pi n^2\epsilon}
$$
with identity holds if $\tau=i\epsilon$.
By the Poisson summation formula, we have
$$
\sqrt{\epsilon}\cdot  e^{\pi  z^2/\epsilon}\sum_{n\in\mathbb Z} e^{2\pi i nz} e^{-\pi n^2 \epsilon}= \sum_{n\in\mathbb Z} e^{2\pi i nz/\epsilon} e^{-\pi n^2 /\epsilon},
$$
take $z=it$, we get
$$
e^{\phi(t)-\pi t^2 /\epsilon} \leq e^{-\pi t^2 /\epsilon}\sum_{n\in\mathbb Z} e^{-2\pi nt} e^{-\pi n^2\epsilon}=\frac1{\sqrt{\epsilon}} \cdot\sum_{n\in\mathbb Z} e^{-\pi   nt/\epsilon} e^{-\pi n^2 /\epsilon},
$$
which gives
$$
\sup_{z\in \mathbb C}    \big|\vartheta\left(z; \tau\right)e^{-\pi ({\rm Im} \, z)^2/\epsilon}\big|  \leq\frac1{\sqrt{\epsilon}} \cdot\sum_{n\in\mathbb Z}  e^{-\pi n^2 /\epsilon}=\sum_{n\in\mathbb Z} e^{-\pi n^2 \epsilon},
$$
(for a higher-dimensional generalization of the above argument, see \cite[Lemma 8.2]{PA}). By \eqref{eq:fal-2}, the above estimate gives the following lower bound for $\rho_\epsilon/\epsilon$.}

\begin{theorem}\label{th:robin}  The $\epsilon$-Robin constant of $\Gamma={\rm Span}_{\mathbb Z} \{1, \tau\}$, ${\rm Im}\, \tau=\epsilon$ satisfies
\begin{equation}\label{eq:fal-3}
\frac{\rho_\epsilon}\epsilon \geq 2 \log(2\pi)+6\log|\eta(\tau)|- 2 \log  \sum_{n\in\mathbb Z} e^{-\pi n^2 \epsilon},
\end{equation}
with identity holds  if $\tau= i\epsilon$.
\end{theorem}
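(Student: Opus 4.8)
\emph{Proof proposal.} The plan is to read off Theorem~\ref{th:robin} from Faltings' identity \eqref{eq:fal-2} of Proposition~\ref{pr:faltings} combined with the Jacobi theta transformation; all the arithmetic content (the $\eta$--function and the relation between $\partial_z\vartheta$ at the half-period and $\eta^3$) is already packaged in Proposition~\ref{pr:faltings}, so only a short analytic estimate of $\sup_{\mathbb C}\log U$ remains. Concretely, by \eqref{eq:fal-2} the inequality \eqref{eq:fal-3} is equivalent to the bound $\sup_{\mathbb C}\log U\le 2\log\bigl(\sum_{n\in\mathbb Z}e^{-\pi n^2\epsilon}\bigr)$, and the equality case $\tau=i\epsilon$ in \eqref{eq:fal-3} corresponds to equality in this bound. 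First I would perform the substitution $w=z+\tfrac12+\tfrac\tau2$ in \eqref{eq:fal-1}, under which $\Im z+\tfrac\epsilon2=\Im w$, so that $U$ becomes $|\vartheta(w;\tau)|^2e^{-2\pi(\Im w)^2/\epsilon}$ and hence $\sup_{\mathbb C}\log U=2\log\Phi$ with $\Phi:=\sup_{w\in\mathbb C}|\vartheta(w;\tau)|\,e^{-\pi(\Im w)^2/\epsilon}$. It then suffices to show $\Phi\le\sum_{n}e^{-\pi n^2\epsilon}$, with equality when $\tau=i\epsilon$.

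Next I would estimate $\Phi$ slice by slice. Fix $t=\Im w$. Since $|e^{2\pi inw}e^{\pi in^2\tau}|=e^{-2\pi nt-\pi n^2\epsilon}$, the triangle inequality applied to $\vartheta(w;\tau)=\sum_ne^{2\pi inw}e^{\pi in^2\tau}$ gives $|\vartheta(w;\tau)|\le\sum_ne^{-2\pi nt-\pi n^2\epsilon}$ uniformly in $\Re w$; and when $\tau=i\epsilon$ the point $w=it$ makes every summand nonnegative, so the bound is attained. Writing $\phi(t):=\sup_{\Im w=t}\log|\vartheta(w;\tau)|$ and completing the square in the exponent, $e^{\phi(t)-\pi t^2/\epsilon}\le e^{-\pi t^2/\epsilon}\sum_ne^{-2\pi nt-\pi n^2\epsilon}=\sum_ne^{-\pi\epsilon(n+t/\epsilon)^2}$, with equality for every $t$ when $\tau=i\epsilon$.

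Finally I would apply Poisson summation to $x\mapsto e^{-\pi\epsilon x^2}$: letting $a:=t/\epsilon$ range over $\mathbb R$, one has $\sum_ne^{-\pi\epsilon(n+a)^2}=\tfrac1{\sqrt\epsilon}\sum_ke^{-\pi k^2/\epsilon}e^{2\pi ika}\le\tfrac1{\sqrt\epsilon}\sum_ke^{-\pi k^2/\epsilon}=\sum_ke^{-\pi k^2\epsilon}$, the last equality being the $a=0$ case, i.e. the Jacobi identity $\sqrt\epsilon\sum_ke^{-\pi\epsilon k^2}=\sum_ke^{-\pi k^2/\epsilon}$, and with equality iff $a\in\mathbb Z$. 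Combined with the previous step, $\Phi=\exp\bigl(\sup_t(\phi(t)-\pi t^2/\epsilon)\bigr)\le\sum_ne^{-\pi n^2\epsilon}$, which via \eqref{eq:fal-2} gives \eqref{eq:fal-3}. When $\tau=i\epsilon$ the slice estimate is an identity, and $t\mapsto\sum_ne^{-\pi\epsilon(n+t/\epsilon)^2}$ is $\epsilon$-periodic, hence attains its maximum $\sum_ne^{-\pi n^2\epsilon}$ at $t=0$ (where $a=0\in\mathbb Z$ makes the Poisson bound tight); so \eqref{eq:fal-3} is sharp.

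\emph{Main obstacle.} Granting Proposition~\ref{pr:faltings}, no genuinely hard step remains; the one point needing care is the equality discussion, which reduces to the elementary facts that the Poisson-dual series $a\mapsto\sum_ke^{-\pi k^2/\epsilon}\cos(2\pi ka)$ is maximized exactly at $a\in\mathbb Z$, and that the periodicity of $t\mapsto\sum_ne^{-\pi\epsilon(n+t/\epsilon)^2}$ makes the supremum over $t$ an honest maximum. (Of course, the substantive work is the proof of Proposition~\ref{pr:faltings} itself, which this argument takes for granted.)
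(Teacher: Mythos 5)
Your proposal is correct and follows essentially the same route as the paper: the paper's proof of Theorem \ref{th:robin} is exactly the remark preceding it, which reduces \eqref{eq:fal-3} via \eqref{eq:fal-2} to the bound $\sup_{\mathbb C}\log U\le 2\log\sum_{n}e^{-\pi n^2\epsilon}$ and establishes it by the same slice-by-slice triangle inequality for $\vartheta$ followed by Poisson summation. Your equality discussion at $\tau=i\epsilon$ (maximization of the Poisson-dual series at integer $a$) is in fact slightly more explicit than the paper's.
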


\subsection{Proof of Theorem B}

\begin{proof}[Proof of (1) and (2)] Notice that the disc of diameter $\int_{0\neq \lambda \in \Gamma} |\lambda|$ is contained in a fundamental domain of $\mathbb C/\Gamma$. Hence $|\Lambda|^{-1}=|\Gamma| \geq C$. The frame bounds estimate follows directly from Theorem \ref{th:c1} and Theorem \ref{th:dual3}.
\end{proof}

\begin{proof}[Proof of (3)] For the lower bound, by Theorem \ref{th:c2} (let $a$ go to $\epsilon$), it suffices to compute the $\epsilon$-Robin constant $\rho$, where  
$$
\epsilon:=a \, {\rm Im}\, \tau
$$
is the Seshadri constant. Denote by $\rho'$ the ${\rm Im}\, \tau/a$-Robin constant  of $\mathbb C/\langle 1, \tau/a\rangle$, then a change of variable argument gives
$$
\rho=a^2 \rho'.
$$  
Thus 
$$
\rho/\epsilon=\rho'/( {\rm Im}\, \tau/a) \geq 2 \log(2\pi)
+6 \log|\eta(\tau/a)|-2 \log  \sum_{n\in\mathbb Z} e^{-\pi n^2  {\rm Im}\, \tau/a},
$$
by Theorem \ref{th:robin}. Apply Theorem \ref{th:c2} and Theorem \ref{th:dual3}, we get the lower bound. The upper bound follows directly from Theorem \ref{th:dual3} and Theorem \ref{th:c1}. 
\end{proof}
\textbf{Acknowledgement:} We would like to thank A. Austad, E. Berge, U. Enstad, M. Faulhuber, L. Polterovich and E. Skrettingland for their feedback on earlier versions of the manuscript. Thanks are due to the referee for many helpful suggestions.

\end{document}